\documentclass[12pt,reqno]{amsart}
\usepackage[a4paper,
  left=25mm,
  right=25mm,
  top=25mm,
  bottom=25mm
]{geometry}

\usepackage{amsmath,amsfonts,amssymb, amsthm}
\usepackage{bm}
\usepackage{graphicx}
\usepackage{ascmac}
\usepackage[legacycolonsymbols]{mathtools}
\usepackage{nccmath}
\usepackage{mathrsfs}
\usepackage{mleftright}

\usepackage[italicdiff]{physics}

\usepackage{thmtools}
\usepackage{thm-restate}
\usepackage{needspace}
\usepackage{etoolbox}
\usepackage{xspace}
\usepackage{placeins}
\usepackage{multirow}
\usepackage{makecell}

\usepackage{enumitem}
\setlist[enumerate]{label=(\arabic*), ref=(\arabic*), leftmargin=2.63em, labelsep=0.35em}
\setlist[itemize]{leftmargin=2em, labelsep=0.4em}

\usepackage[%
setpagesize=false,%
bookmarks=true,%
bookmarksnumbered=true,%
hidelinks%
]{hyperref}

\mleftright
\allowdisplaybreaks
\mathtoolsset{showonlyrefs=true}
\def\thefootnote{\arabic{footnote}}

\DeclareMathAlphabet{\mathpzc}{OT1}{pzc}{m}{it}

\makeatletter

\newif\if@noindentafterheading
\let\orig@afterheading\@afterheading
\def\@afterheading{%
  \if@noindentafterheading\global\@afterindentfalse\fi
  \orig@afterheading
}

\renewcommand\section{\@startsection{section}{1}{\z@}%
  {-3.5ex \@plus -1ex \@minus -.2ex}
  {2.6ex \@plus .2ex}
  {\normalfont\large\bfseries}%
}

\renewcommand\subsection{\@startsection{subsection}{2}{\z@}%
  {-2.5ex \@plus -.3ex \@minus -.2ex}%
  {1.5ex \@plus .2ex}%
  {\normalfont\bfseries}%
}

\let\orig@section\section
\renewcommand\section{%
  \@ifstar{\section@star}{\section@nostar}%
}
\newcommand\section@nostar{%
  \@ifnextchar[{\section@opt}{\section@noopt}%
}
\newcommand\section@opt[2][]{%
  \global\@noindentafterheadingtrue
  \orig@section[#1]{#2}%
  \global\@noindentafterheadingfalse
}
\newcommand\section@noopt[1]{%
  \global\@noindentafterheadingtrue
  \orig@section{#1}%
  \global\@noindentafterheadingfalse
}
\newcommand\section@star[1]{%
  \global\@noindentafterheadingtrue
  \orig@section*{#1}%
  \global\@noindentafterheadingfalse
}

\let\orig@subsection\subsection
\renewcommand\subsection{%
  \@ifstar{\subsection@star}{\subsection@nostar}%
}
\newcommand\subsection@nostar{%
  \@ifnextchar[{\subsection@opt}{\subsection@noopt}%
}
\newcommand\subsection@opt[2][]{%
  \global\@noindentafterheadingtrue
  \orig@subsection[#1]{#2}%
  \global\@noindentafterheadingfalse
}
\newcommand\subsection@noopt[1]{%
  \global\@noindentafterheadingtrue
  \orig@subsection{#1}%
  \global\@noindentafterheadingfalse
}
\newcommand\subsection@star[1]{%
  \global\@noindentafterheadingtrue
  \orig@subsection*{#1}%
  \global\@noindentafterheadingfalse
}

\renewcommand\subsubsection{\@startsection{subsubsection}{3}{\z@}%
  {-2.0ex \@plus -.3ex \@minus -.2ex}
  {1.0ex \@plus .2ex}
  {\normalfont\bfseries}
}

\let\orig@subsubsection\subsubsection
\renewcommand\subsubsection{%
  \@ifstar{\subsubsection@star}{\subsubsection@nostar}%
}
\newcommand\subsubsection@nostar{%
  \@ifnextchar[{\subsubsection@opt}{\subsubsection@noopt}%
}
\newcommand\subsubsection@opt[2][]{%
  \global\@noindentafterheadingtrue
  \orig@subsubsection[#1]{#2}%
  \global\@noindentafterheadingfalse
}
\newcommand\subsubsection@noopt[1]{%
  \global\@noindentafterheadingtrue
  \orig@subsubsection{#1}%
  \global\@noindentafterheadingfalse
}
\newcommand\subsubsection@star[1]{%
  \global\@noindentafterheadingtrue
  \orig@subsubsection*{#1}%
  \global\@noindentafterheadingfalse
}

\newcommand{\inputnolabels}[1]{%
  {%
    \let\label\@gobble
    \input{#1}%
  }%
}

\makeatother

\newcommand{\mcP}{\mathcal{P}}
\newcommand{\mcS}{\mathcal{S}}
\newcommand{\mbG}{\mathbb{G}}
\newcommand{\msD}{\mathscr{D}}

\newcommand{\Z}{\mathbb{Z}}								
\newcommand{\Zz}{\mathbb{Z}_{\geq 0}}						
\newcommand{\Zp}{\mathbb{Z}_{> 0}}						
\newcommand{\Q}{\mathbb{Q}}								
\newcommand{\Qb}{\overline{\mathbb{Q}}}					
\newcommand{\C}{\mathbb{C}}								
\newcommand{\Ch}{\widehat{\mathbb{C}}}					
\newcommand{\Pj}{\mathbb{P}}								

\newcommand{\A}{\alpha}									%
\newcommand{\B}{\beta}									%
\newcommand{\G}{\gamma}								%
\newcommand{\la}{\lambda}								%

\newcommand{\OL}{\vspace{1\baselineskip}}					%
\newcommand{\HL}{\vspace{.5\baselineskip}}					%
\newcommand{\q}{\quad}									
\newcommand{\sq}{\hspace{1.3em}}							
\newcommand{\f}[2]{\frac{#1}{#2}}							%
\newcommand{\npmod}[1]{\!\!\!\! \pmod{#1}}					%
\newcommand{\ceq}{\coloneqq} 							
\newcommand{\ie}{i.e.\ }									
\newcommand{\vt}[1]{\left\lvert #1 \right\rvert}					
\newcommand{\gen}[1]{\left\langle #1 \right\rangle}				
\newcommand{\fl}[1]{\left\lfloor #1 \right\rfloor}					
\newcommand{\relmiddle}[1]{\mathrel{}\middle#1\mathrel{}}		

\DeclareMathOperator{\Aut}{Aut}							
\DeclareMathOperator{\Gal}{Gal}							
\DeclareMathOperator{\Sym}{Sym}							

\pretocmd{\section}{\needspace{4\baselineskip}}{}{}
\pretocmd{\subsection}{\needspace{3\baselineskip}}{}{}
\pretocmd{\subsubsection}{\needspace{2\baselineskip}}{}{}

\newtheoremstyle{mythmstyle}  
  {12pt}   
  {12pt}   
  {\itshape} 
  {}      
  {\bfseries} 
  {.}     
  { }     
  {}      

\newtheoremstyle{mydefstyle}  
  {12pt}   
  {12pt}   
  {}       
  {}       
  {\bfseries} 
  {.}      
  { }      
  {}       

\newtheoremstyle{myremarkstyle} 
  {12pt}   
  {12pt}   
  {}       
  {}       
  {\normalfont} 
  {.}      
  { }      
  {}       

\theoremstyle{mythmstyle}
\newtheorem{lemma}{Lemma}[section]
\newtheorem{prop}[lemma]{Proposition}
\newtheorem{thm}[lemma]{Theorem}
\newtheorem{cor}[lemma]{Corollary}

\theoremstyle{mydefstyle}
\newtheorem{definition}[lemma]{Definition}
\newtheorem{exa}[lemma]{Example}
\newtheorem{rem}[lemma]{Remark}

\newcommand{\thmref}[1]{Theorem~\ref{#1}}
\newcommand{\propref}[1]{Proposition~\ref{#1}}
\newcommand{\lemref}[1]{Lemma~\ref{#1}}
\newcommand{\corref}[1]{Corollary~\ref{#1}}
\newcommand{\defref}[1]{Definition~\ref{#1}}

\newcommand{\figref}[1]{Figure~\ref{#1}}
\newcommand{\tabref}[1]{Table~\ref{#1}}
\newcommand{\secref}[1]{Section~\ref{#1}}

\newcommand{\belyi}{Bely\u{\i}\xspace}
\newcommand{\dde}{dessin d'enfant\xspace}
\newcommand{\ddes}{dessins d'enfants\xspace}
\newcommand{\Dde}{Dessin d'enfant\xspace}
\newcommand{\DdEs}{Dessins d'Enfants\xspace}
\newcommand{\AD}{\Aut{\msD}}
\newcommand{\sigman}{(1\ 2\ \ldots\ n)}

\numberwithin{equation}{section}

\newcommand{\thmclassfour}{If a uniform passport $[a^{p}, b^{q}, n]$ with $n = pa = qb$ and $2 \le p < q$ has genus
at least~$2$, then it admits a \dde with trivial automorphism group.

The same statement holds for any passport obtained by permuting $a^{p}$, $b^{q}$, and $n$.}

\newcommand{\thmclassonetofour}{If a uniform unicellular passport $[a^{p},b^{q},n]$ with $n=pa=qb$
has genus at least~$2$, then it admits a \dde with trivial automorphism group.

The same statement holds for any passport obtained by permuting $a^{p}$, $b^{q}$, and $n$.}

\begin{document}

\title{Uniform Unicellular Dessins d'Enfants with Trivial Automorphism Groups}

\author{Tatsuya Ohnishi}
\markboth{}{}

\begin{abstract}
A \belyi function on a smooth projective algebraic curve defined over a number field determines a bipartite graph called
a \emph{dessin d'enfant}.

We study the regularity and automorphism groups of dessins with uniform passports. In previous papers, we proved that every passport of the form $[n,n,n]$, $[n,b^{q},n]$, or $[b^{q},b^{q},n]$ of genus at least~$2$ admits a dessin with trivial automorphism group. In this paper, we prove the analogous result for passports of the form $[a^{p},b^{q},n]$.
The proof is mainly based on a counting argument: we compare a lower bound
for the number of permutation representations having the prescribed passport
with an upper bound for the number admitting a nontrivial automorphism.
Together with our previous results, this shows that every uniform unicellular passport of genus at least~$2$ admits a dessin with trivial automorphism group.
\end{abstract}

\maketitle

\vspace{-1\baselineskip}

\def\thefootnote{\fnsymbol{footnote}}
\makeatletter
\renewcommand\@makefntext[1]{%
  \noindent\hspace{0.5em}#1%
}
\makeatother
\begin{NoHyper}
\footnotetext{Tatsuya~Ohnishi~(\raisebox{-0.18ex}{\includegraphics[height=2.05ex]{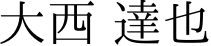}})}
\footnotetext{Graduate School of Information Science and Technology, The University of Osaka, Japan}
\footnotetext{e-mail: {\tt ohnishi-t@ist.osaka-u.ac.jp}}
\footnotetext{2020 Mathematics Subject Classification: Primary 14H57; Secondary 11G32}
\footnotetext{Keywords: \dde, uniform passport, monodromy group, automorphism group, regular dessin}
\end{NoHyper}

\makeatletter
\renewcommand\@makefntext[1]{%
  \noindent\@makefnmark\ #1%
}
\makeatother

\def\thefootnote{\arabic{footnote}}


{\small
\tableofcontents
}


\section{Introduction}

\subsection{\belyi's Theorem and \DdEs}

\label{sec:belyi}
\begin{thm}[\belyi's Theorem]\cite{Belyi79}\cite{Belyi02}\cite[Theorem~1.3]{Jones16}
Let $X$ be a compact Riemann surface, that is, a smooth projective algebraic curve in
$\Pj_{\C}^{N}$ for some $N$.
Then $X$ can be defined over the field of algebraic numbers $\Qb$ if and only if there exists a nonconstant meromorphic
function $\B\colon X \to \Ch\ (\ceq \C \cup \{ \infty \})$ ramified over at most three points.
\end{thm}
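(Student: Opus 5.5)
The plan is to prove the two implications separately. The direction ``defined over $\Qb$ $\Rightarrow$ Bely\u{\i} function exists'' is constructive, following Bely\u{\i}'s original algorithm. The converse is a rigidity (finiteness) argument combined with a descent step.

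\emph{Only if.} Suppose $X$ is defined over $\Qb$.
\begin{enumerate}
\item Choose a nonconstant rational function $f\colon X\to\Ch$ defined over $\Qb$, for instance a generic linear projection restricted to $X$. Its finite set $S$ of critical values lies in $\Pj^1(\Qb)$.
\item Reduce to rational critical values. Let $\alpha\in S$ have the largest degree over $\Q$, and let $m_\alpha(x)\in\Q[x]$ be its minimal polynomial. Replace $f$ by $m_\alpha\circ f$. The new branch set is contained in $m_\alpha(S)\cup m_\alpha(\{\text{critical values of } m_\alpha\})\cup\{\infty\}$.
\item Check that this step terminates. Let $N$ be the maximal degree over $\Q$ of a non-rational branch value. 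The elements of $m_\alpha(S)$ coming from $\alpha$ and its conjugates become $0$. The critical values of $m_\alpha$ have degree at most $\deg m_\alpha-1$. Hence either $N$ drops, or the number of Galois orbits of branch values of degree $N$ drops. By induction we reach a function whose branch values all lie in $\Pj^1(\Q)$.
\item Reduce the number of branch points to three. Use a M\"obius transformation over $\Q$ to send three of the branch values to $0,1,\infty$, and a fourth one to some $\lambda=m/(m+n)\in(0,1)\cap\Q$. Then compose with
\[
h(x)=\frac{(m+n)^{m+n}}{m^m n^n}\,x^m(1-x)^n .
\]
This map sends $0,1,\infty$ to $0,0,\infty$, has its only other critical point at $\lambda$ with $h(\lambda)=1$, and is defined over $\Q$. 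So the number of branch values decreases by one.
\item Iterate step 4 until at most three branch values remain.
\end{enumerate}

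\emph{If.} Suppose $\beta\colon X\to\Ch$ is unramified outside $\{0,1,\infty\}$.
\begin{enumerate}
\item Apply the Riemann existence theorem. Isomorphism classes of pairs $(X,\beta)$ of degree $d$ correspond to transitive pairs of permutations in $S_d$ up to simultaneous conjugacy. Hence there are only finitely many such classes.
\item Realize $X$ and $\beta$ over some finitely generated field $K\subset\C$, and act by $\sigma\in\Aut(\C/\Qb)$. The conjugate $(X^\sigma,\beta^\sigma)$ is again a Bely\u{\i} pair of the same degree, since $0,1,\infty$ are fixed.
\item By finiteness, the field of moduli of $(X,\beta)$ is a finite extension of $\Q$.
\item Descend. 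One route is Weil's descent criterion applied to the finitely many conjugates, possibly after rigidifying by marked points to kill automorphisms. The alternative I would try first, since it avoids cocycle bookkeeping, is a specialization argument. Spread $(X,\beta)$ out over a variety $V/\Qb$ with function field $K$. Specializing at points of $V(\Qb)$ gives Bely\u{\i} pairs over $\Qb$ with the same monodromy, because the branch locus stays $\{0,1,\infty\}$, so the covers are locally constant in the family. All fibres are therefore isomorphic to one pair defined over $\Qb$, and so is the generic fibre $(X,\beta)$.
\end{enumerate}

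I expect the main obstacle to be this last step. One must rigorously pass from ``only finitely many conjugates'' to ``definable over $\Qb$''. In particular, one has to justify that the family over $V$ is isotrivial, for example via the topological local constancy of unramified covers of $\Pj^1\setminus\{0,1,\infty\}$ in families. The algebraic direction is routine once the degree-decrease bookkeeping in step 3 of the first part is set up carefully.
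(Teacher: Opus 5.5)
The paper gives no proof of this theorem: it quotes it from Bely\u{\i}'s papers and from Jones--Wolfart as background, so there is no internal argument to compare with. Judged on its own, your outline is the standard proof and is essentially correct.

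\textbf{Constructive direction.} This is Bely\u{\i}'s original algorithm. Your termination measure works for three reasons. First, $m_\alpha\in\Q[x]$ maps each Galois orbit of branch values into a single orbit of no larger degree. Second, it sends the orbit of $\alpha$ to $0$. Third, the only new branch values are $m_\alpha(c)$ with $m_\alpha'(c)=0$, which lie in $\Q(c)$ and so have degree at most $N-1$.

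\textbf{Converse.} The step you single out as the main obstacle is fully settled by the topological argument you mention, and it needs none of your steps 1--3.
\begin{enumerate}
\item Shrink $V$ so that $\mathcal{X}\to V$ is smooth and projective with connected fibres, and $\beta$ is finite \'etale over $(\Pj^1\setminus\{0,1,\infty\})\times V$. That restriction is then a finite covering space of $B\times V(\C)$, where $B=\Pj^1(\C)\setminus\{0,1,\infty\}$.
\item Since $V$ is an irreducible $\Qb$-variety, $V(\C)$ is path-connected.
\item Lift along a path from the point $v_0\in V(\C)$ determined by $K\subset\C$ to some $v_1\in V(\Qb)$. Homotopy lifting identifies the two restricted covers of $B$.
\item This homeomorphism commutes with the projections to $B$, so it is biholomorphic. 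It extends over the punctures and is algebraic by GAGA.
\end{enumerate}
Hence $(X,\beta)$ is isomorphic to the fibre over $v_1$, which is defined over $\Qb$.

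Finiteness of conjugates is essential only if you go through Weil descent, or through the criterion that a curve with finitely many $\Aut(\C)$-conjugates is defined over $\Qb$. The Weil route is genuinely more delicate: $\C/\Qb$ is not a finite Galois extension, and nontrivial automorphisms of the pair obstruct the cocycle condition. So your preference for specialization is the right one.

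\textbf{Small repairs.}
\begin{itemize}
\item In the converse you should first normalize the at most three branch values into $\{0,1,\infty\}$ by a M\"obius map over $\C$.
\item In step 2, the new branch set is $m_\alpha(S)$ together with the critical values of $m_\alpha$, i.e.\ $m_\alpha$ applied to its critical points. As written, you apply $m_\alpha$ twice.
\item In step 4, the M\"obius map sending three chosen points to $0,1,\infty$ is unique. To land the fourth point in $(0,1)$ you must choose which point goes where. This is always possible, because one of $\lambda$, $1-\lambda$, $1/\lambda$, $1/(1-\lambda)$, $\lambda/(\lambda-1)$, $(\lambda-1)/\lambda$ lies in $(0,1)$.
\item When spreading out, take $K$ finitely generated over $\Qb$ (replace it by its compositum with $\Qb$), so that $V$ is a $\Qb$-variety.
\item $(X,\beta)$ is the fibre over the $\C$-point $v_0$, i.e.\ the base change of the generic fibre, not the generic fibre itself.
\item Your step 3 (field of moduli finite over $\Q$) would need the remark that elements of $\C$ with finite $\Aut(\C/\Q)$-orbit are algebraic. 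On the specialization route you can simply drop it.
\end{itemize}
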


This remarkable theorem allows algebraic curves defined over number fields to be studied through
combinatorial structures on surfaces.  For a compact Riemann surface $X$
defined over a number field, there exists a meromorphic function
$\B\colon X\to\Ch$, called a \emph{\belyi function}, ramified over at most three
points. By composing $\B$ with a suitable M\"obius transformation, we may assume
that its critical values are contained in $\{0,1,\infty\}$.

Such a pair $(X,\B)$ is called a \emph{\belyi pair}. From a \belyi pair, one
obtains a bipartite graph embedded in $X$, called a \emph{\dde} (child's drawing),
or simply a \emph{dessin} (see \defref{def:dessin}).

The dessin for $(X, \B)$ is drawn on an orientable surface of the same genus as $X$, where the black vertices ($\bullet$)
 and white vertices ($\circ$) represent $\B^{-1}(0)$ and $\B^{-1}(1)$, respectively\footnote{Some references
(such as \cite{Jones16}) represent $\B^{-1}(0)$ by white vertices and $\B^{-1}(1)$ by black vertices.
In this paper, we adopt the convention used in many classical references.}.
The embedded graph is $\B^{-1}([0,1])$. Its edges are the closures of the connected components of $\B^{-1}((0,1))$,
and its faces are the connected components of $X \setminus \B^{-1}([0,1])$. Each face is homeomorphic to an open
disk and contains exactly one point of $\B^{-1}(\infty)$.

By studying the properties of \ddes, one can combine insights from algebraic geometry and
combinatorics, including graph theory, to enrich both subjects. This perspective also opens up
a range of possibilities for further applications.

Two dessins $\msD$ and $\msD'$ are said to be \emph{isomorphic} if there exists
an orientation-preserving homeomorphism between the underlying surfaces
that induces an isomorphism of the embedded bipartite graphs, preserving
vertex colors and the cyclic order of incident edges at each vertex.
Equivalently, if $(X,\B)$ and $(X',\B')$ are the corresponding \belyi pairs,
then $\msD$ and $\msD'$ are isomorphic if and only if there exists a
biholomorphic map $\varphi\colon X \to X'$ such that $\B'\circ\varphi=\B$.

Two examples of \ddes are shown in \figref{fig:dessins-ex}. The dessins $\msD_{1}$ and $\msD_{2}$ correspond
to the \belyi pairs $(X_{1}, \B_{1})$ and $(X_{2}, \B_{2})$, respectively, where
\begin{align}
X_{1}&\colon y^{2} = x(x-1)(x-\sqrt[3]{2}), \q \B_{1}(x, y) = x^{3}(2-x^{3}), \\
X_{2}&\colon y^{2} = x(x-1)(x-\sqrt[3]{2}\omega), \q \B_{2}(x, y) = x^{3}(2-x^{3}),\q  \omega \ceq e^{\f{2\pi i}{3}}.
\end{align}

\begin{figure}[htbp]
\centering
\includegraphics[width=0.84\textwidth]{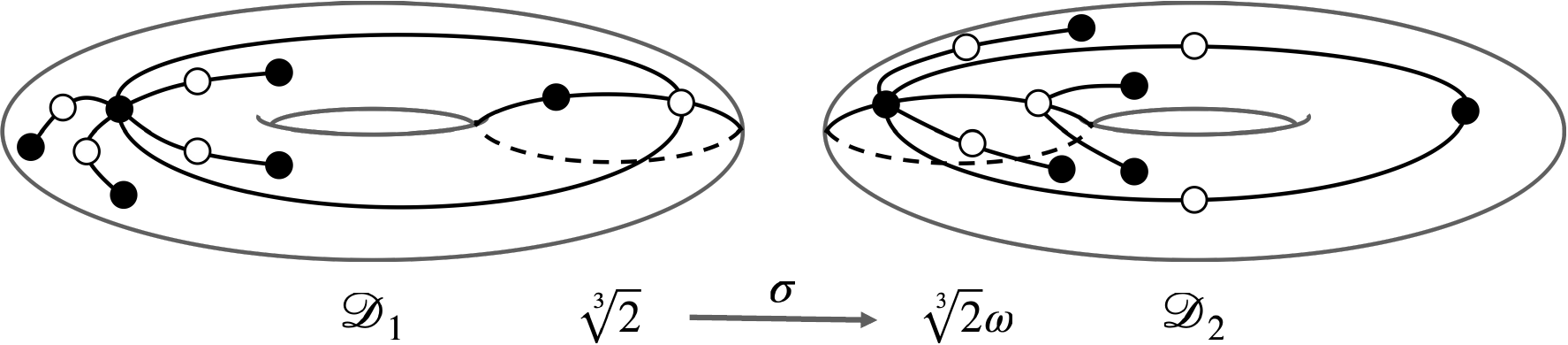} 
\caption{Two Galois-conjugate \ddes}
\label{fig:dessins-ex}
\end{figure}

Both dessins have genus~$1$ and $12$ edges; the number of edges equals the degree of the corresponding \belyi function.
Note that the degree of a \belyi function $\B\colon X\to\Ch$ is the degree of this
morphism, equivalently, the number of points in the preimage of a noncritical value. This need not
coincide with the degree of the polynomial expression defining $\B$ in affine coordinates. In fact,
in the examples above, the former is $12$, whereas the latter is $6$.

Let $\sigma$ be an automorphism of $\Qb$ sending
$\sqrt[3]{2}$ to $\sqrt[3]{2}\omega$.
Applying $\sigma$ to the coefficients of the defining equations of $(X_1,\B_1)$ yields
the \belyi pair $(X_2,\B_2)$, and hence the dessin $\msD_2$.
This provides a concrete example of the action of the absolute Galois group
$\mbG=\Gal(\Qb/\Q)$ on \ddes \cite[4.2.1]{Jones16}.

The action of $\mbG$ on the set of all dessins is faithful; that is,
for any two distinct elements of $\mbG$, there exists a dessin
whose images under these elements lie in distinct isomorphism classes.
Therefore, the study of Galois orbits of dessins provides a powerful tool for investigating the structure
of the absolute Galois group.
\HL
 
\subsection{Regularity and Automorphism Groups}

\label{sec:regaut}
Regularity and the isomorphism class of the automorphism group are invariant under the Galois action.
A dessin is said to be \emph{regular} if its \emph{monodromy group} (see \defref{def:monog}) acts
regularly (that is, freely and transitively) on the set of edges.
From a geometric point of view, the \emph{automorphism group} of a dessin is the group of deck transformations of the
associated \belyi covering.
Equivalently, it can be identified with the centralizer of the monodromy group in the symmetric group acting on the edges.

By studying regularity and automorphism groups, one can gain insight into the symmetry properties of a \dde.
The order of the automorphism group always divides the number of edges of the dessin, and the equality of these two numbers is equivalent to the dessin being regular.
Thus, the structure of the automorphism group provides a precise measure of the symmetry exhibited by the dessin.

\figref{fig:regular} shows two dessins of genus~$1$ with the same passport; that is, they have the same valency list:
each has $8$ edges, two black vertices of valency $4$, four white vertices of valency $2$, and two faces of valency $4$.
Although both dessins may appear highly symmetric at first glance, the left dessin is regular, whereas the right one is not.
The automorphism group of the left dessin has order~$8$, while that of the right dessin has order~$4$.

\begin{figure}[htbp]
\centering
\includegraphics[width=0.84\textwidth]{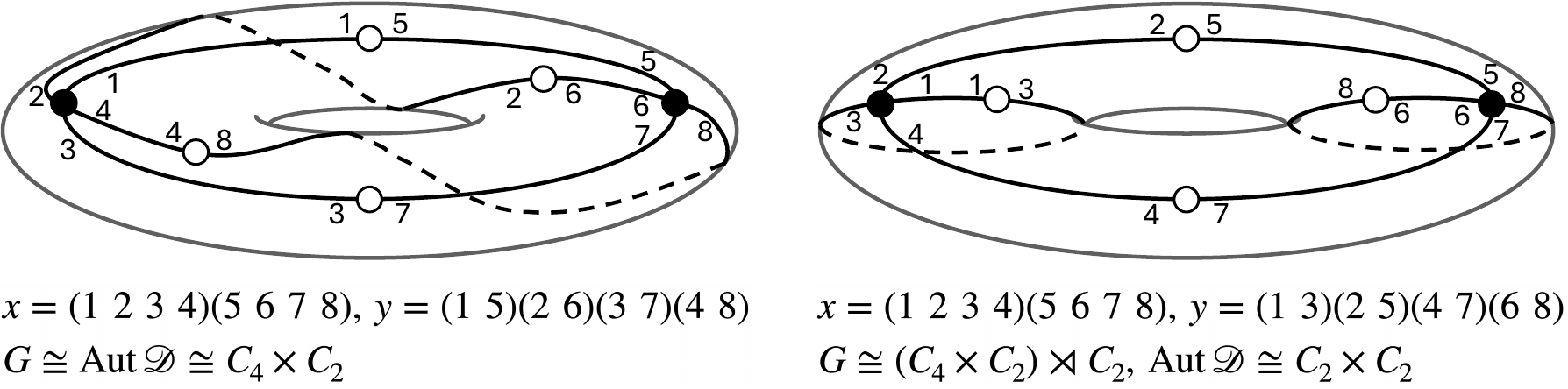} 
\caption{Regular and nonregular dessins with the same passport}
\label{fig:regular}
\end{figure}

Thus, regularity and automorphism groups may differ among dessins with the same passport.
However, they are invariant under the action of the absolute Galois group.
They therefore play an essential role in the study of families of dessins lying in the same Galois orbit.

Furthermore, regularity and automorphism groups have a significant impact on the relationship between
the field of moduli and the field of definition.

For a \dde $\msD$ corresponding to a \belyi pair $(X, \B)$, the \emph{field of moduli} $M(\msD)$
is defined as the fixed field of the subgroup
$G(\msD) = \{ \sigma \in \mbG \mid \msD \cong \msD^{\sigma} \} \le \mbG$.
That is, $M(\msD)$ is the subfield of $\Qb$ consisting of those elements that are fixed by every automorphism
$\sigma$ for which the conjugate dessin $\msD^{\sigma}$ is isomorphic to $\msD$. 
For the dessins in \figref{fig:dessins-ex}, we have
\begin{align}
M(\msD_{1})=\Q(\sqrt[3]{2}),\q M(\msD_{2})=\Q(\sqrt[3]{2}\omega).
\end{align}

A number field $K$ is called a \emph{field of definition} of $\msD$ if both $X$ and $\B$ can be defined over $K$.
Unlike the field of moduli, a field of definition of $\msD$ need not be unique, and there need not exist
a smallest field of definition.

The field of moduli depends only on the isomorphism class of $\msD$ and is contained in any field of definition of $\msD$.
However, the field of moduli is not always a field of definition.
For certain important classes of dessins, including dessins with trivial automorphism groups and regular dessins, the field of moduli is a field of definition \cite{Sijsling16}\cite{Conder13}.

In this paper, we further develop the counting methods introduced in our previous work to establish the existence
of dessins with trivial automorphism groups for the remaining uniform unicellular passports of genus at least~$2$.
These results provide new information on the interplay between regularity and automorphism groups, and may
help clarify their role in the study of fields of moduli and fields of definition.
\HL

\subsection{Uniform Passports and Dessins}

\label{sec:uniform}
A \dde is said to be \emph{uniform} if the valencies of black vertices, white vertices, and faces are each constant.
One also says that it has a uniform passport (or uniform valency list).

Uniformity expresses a local combinatorial regularity: the valencies of the black vertices, white vertices, and faces
are constant within each class. Every regular dessin is uniform, but the converse does not hold. Uniformity alone
does not imply the existence of any nontrivial global symmetry; a uniform dessin may even have a trivial
automorphism group. In contrast, regularity is a global symmetry condition.

As an example, consider uniform passports $[a^{p}, b^{q}, c^{r}]$, where $n = pa = bq = rc$, and suppose that $n = 6$.
By symmetry among black vertices, white vertices, and faces, we may assume $c \ge a \ge b$ (equivalently, $r \le p \le q$).

In genus~$0$, the uniform passports are $[6, 1^{6}, 6]$ and $[2^{3}, 2^{3}, 3^{2}]$.
The corresponding dessins are shown in \figref{fig:genus0-n6}.
In both cases, the dessins are regular.

\begin{figure}[htbp]
\centering
\includegraphics[width=0.38\textwidth]{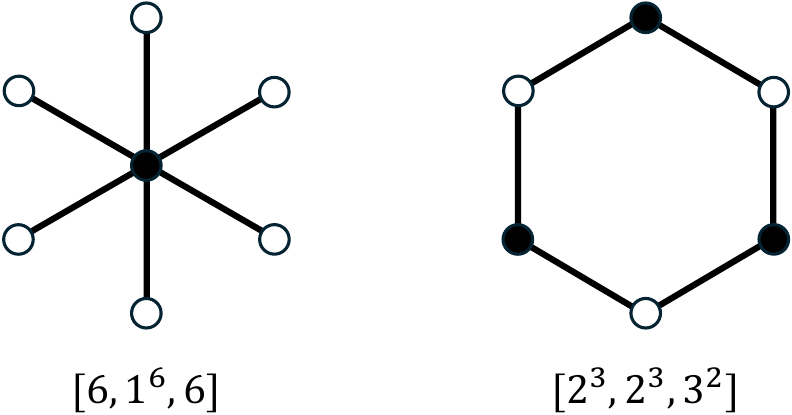}
\caption{Uniform \ddes of genus~$0$, degree~$6$}
\label{fig:genus0-n6}
\end{figure}

In genus~$1$, the uniform passports are $[3^{2}, 2^{3}, 6]$ and $[3^{2}, 3^{2}, 3^{2}]$, and the corresponding dessins are shown in \figref{fig:genus1-n6}.
The dessin on the left is regular, whereas the one on the right is not.
The automorphism group of the right-hand dessin has order~$2$, which is strictly smaller than $6$.

\begin{figure}[htbp]
\centering
\includegraphics[width=0.84\textwidth]{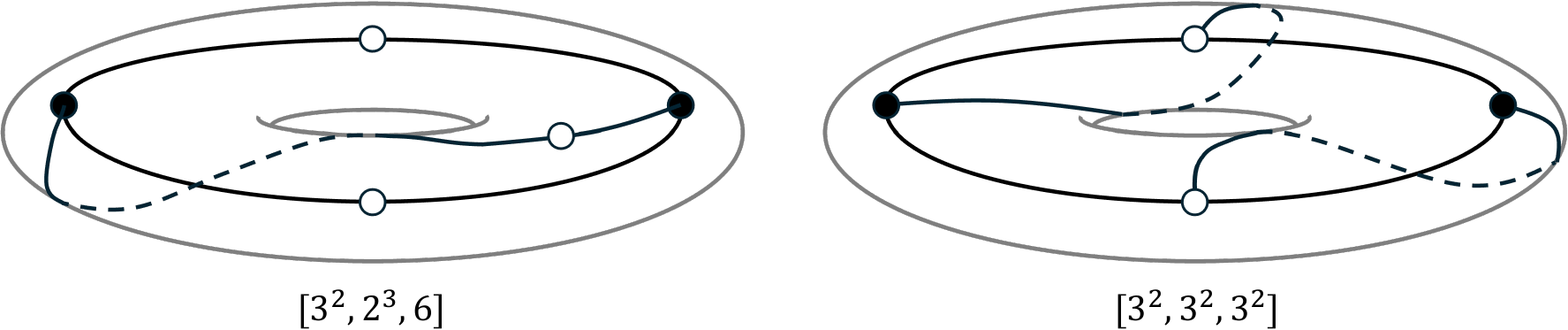}
\caption{Uniform \ddes of genus~$1$, degree~$6$}
\label{fig:genus1-n6}
\end{figure}

For genus at least~$2$, the only uniform passport is $[6, 3^{2}, 6]$, which has genus~$2$.
There are four dessins with this passport, shown in \figref{fig:genus2-n6}.
The orders of their automorphism groups are $6$, $3$, $2$, and $1$, respectively, from the upper left to the lower right.
Only the upper-left dessin is regular; the others are nonregular, and the lower-right dessin has trivial automorphism group.

\begin{figure}[htbp]
\centering
\includegraphics[width=0.84\textwidth]{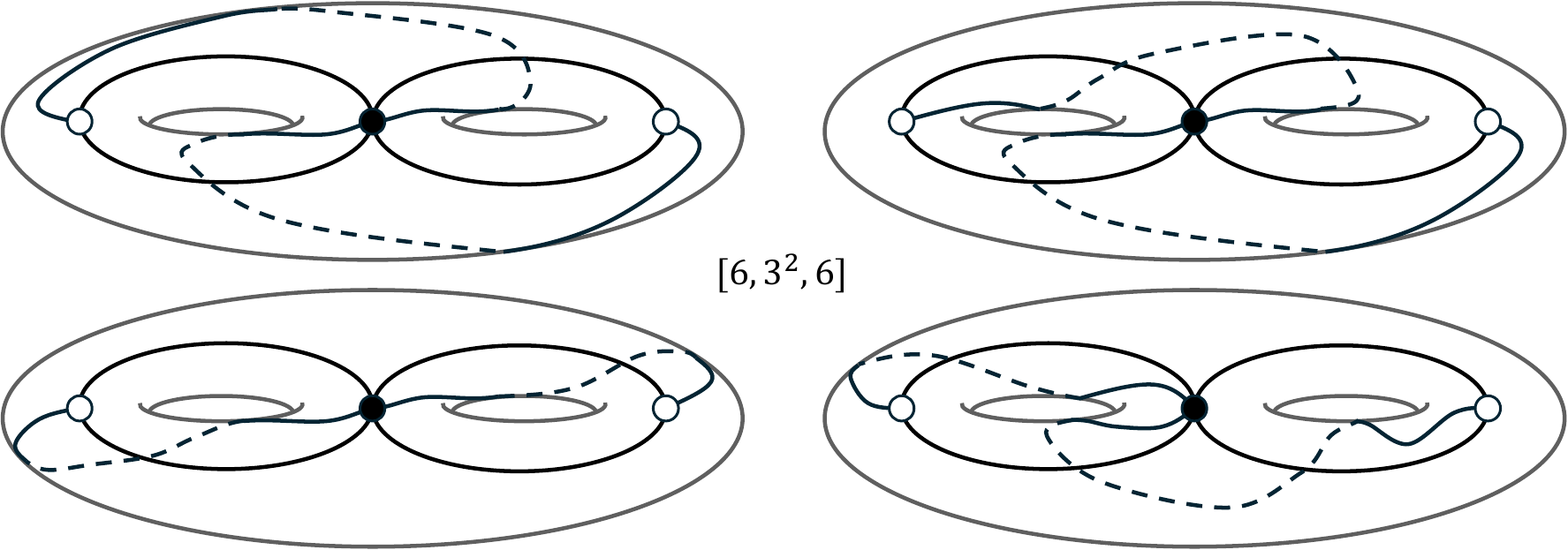}
\caption{Uniform \ddes of genus~$2$, degree~$6$}
\label{fig:genus2-n6}
\end{figure}
\HL

\subsection{Related Work}

The present work is related to the study of automorphism groups
of dessins and to the enumeration of unicellular maps and
permutation factorizations.
We briefly review results in these directions and explain their
relation to the problem considered here.

Jones~\cite{Jones14} shows how, for a finite two-generated group $G$, regular dessins with automorphism group $G$
can be enumerated, represented as quotients of a single regular dessin $U(G)$,
and analyzed under the action of certain hypermap operations.

From a topological perspective, Hidalgo~\cite{Hidalgo19} proves that
every faithful action of a finite group $G$ by orientation-preserving
homeomorphisms on a closed orientable surface of genus at least~$2$
can be realized as the action of the full automorphism group
of a dessin on a surface of the same genus.

In a complementary direction, Jones~\cite{Jones21} establishes
results on the realization of prescribed groups as automorphism
groups in various permutational categories, including those of
maps and hypermaps.
For dessins d'enfants, these results imply that, for any prescribed
hyperbolic type, every finite group is isomorphic to the automorphism
group of infinitely many pairwise non-isomorphic dessins.
In particular, there are infinitely many dessins with trivial
automorphism group, although their complete passports are not prescribed.
The present paper addresses the more specific question of whether
every prescribed uniform unicellular passport of genus at least~$2$,
whose associated type is necessarily hyperbolic,
admits a dessin with trivial automorphism group.

Goupil and Schaeffer~\cite{Goupil98} provide an explicit formula for
the number of decompositions of a fixed $n$-cycle in the symmetric
group $S_{n}$ into two permutations of prescribed cycle types.
Their formula is the starting point for the counting argument
in this paper and is used in \secref{sec:Nlower}.

Horie~\cite{Horie24} studies the enumeration of equivalence classes of dessins whose automorphism groups have
a prescribed order~$r$. The focus is on dessins with two vertices, which necessarily have
passports (valency lists) of the form $[n^{1}, n^{1}, \lambda]$ with $\lambda \vdash n$.
Explicit enumeration formulas are obtained when either the total number
of faces is prescribed or the number of faces of boundary length~$2$
(that is, valency~$1$ in our convention) is prescribed.
These formulas sum over face partitions satisfying the specified condition,
rather than fixing the entire passport.

Douvropoulos~\cite{Douvropoulos24} studies unicellular maps,
which can be viewed as dessins with passports of the form
$[\lambda,2^{q},2q]$, where $\lambda\vdash 2q$.
The main result gives a Harer--Zagier-type generating
function for such maps invariant under a prescribed rotation,
refined by statistics of the rotation action on vertices and edge pairings.
This enumeration allows $\lambda$ to vary and does not prescribe
the complete passport.
The case $b=2$ of the uniform unicellular passports $[a^{p},b^{q},n]$
considered in the present paper corresponds to specifying $\lambda=(a^{p})$
in this family.

These works provide realization theorems and enumeration formulas
under various geometric and combinatorial constraints.
Together, they provide a foundation and motivation for studying the existence of dessins with prescribed
uniform unicellular passports and trivial automorphism groups.

\subsection{Our Previous Work}

Examination of the regularity and automorphism groups of uniform \ddes suggests that,
as the genus increases, the proportion of regular dessins tends to decrease,
whereas the proportion of dessins with trivial automorphism groups tends to increase.

For a given uniform passport, the following questions naturally arise:
\begin{itemize}
\item Under what conditions do regular \ddes exist, and how many are there?
\item How are the automorphism groups distributed?
\item How does this distribution change with the genus?
\end{itemize}
\HL

In our earlier paper \cite{Ohnishi26}, we proposed a conjectural picture, established several supporting results,
and refined the picture accordingly.
It was updated in our previous paper \cite{Ohnishi2606} and is further refined in the present paper.
The current conjectural picture is shown in \tabref{tab:genusautd}.

\begin{table}[htbp]
\centering
\begin{tabular}{|c|l|c|c|c|}
\hline
Genus  & \multicolumn{1}{c|}{Passport} & $\Aut \msD \cong \{ 1 \}$ & $1 {<} \vt{\AD} {<} n$ & $\vt{\AD} {=} n${\small\ (regular)} \\
\hline
$0$ & & \multicolumn{2}{c|}{-- \textsuperscript{\ref{itm:g0reg}}} & $\checkmark$\textsuperscript{\ref{itm:g0reg}} \\ \hline
\multirow{2}{*}{$1$} & $[a^{p}, b^{q}, n]$ & \multirow{2}{*}{-- \textsuperscript{\ref{itm:g1triv}}}  & -- \textsuperscript{\ref{itm:g1nreg}} & $\checkmark$\textsuperscript{\ref{itm:anyg}} \\ \cline{2-2}\cline{4-5}
 & $[a^{p}, b^{q}, c^{r}]$  ($p, q, r \ge 2$)&   & $\checkmark$\textsuperscript{\ref{itm:g1nreg}} & $\diamond$ \\ \hline
\multirow{6}{*}{$\ge 2$} & $[n, n, n]$ ($n$: prime) & \multirow{6}{*}{$\checkmark\textsuperscript{\ref{itm:g2triv}}$} & --\textsuperscript{\ref{itm:g2nnn}} & \multirow{3}{*}{$\checkmark$\textsuperscript{\ref{itm:anyg}}} \\ \cline{2-2}\cline{4-4}
 & $[n, n, n]$ ($n$: composite) &  & $\checkmark$\textsuperscript{\ref{itm:g2nnn}} & \\ \cline{2-2}\cline{4-4}
 & $[n, b^{q}, n]$ ($q \ge 2$) &  & $\checkmark$* & \\ \cline{2-2}\cline{4-5}
 & $[b^{q}, b^{q}, n]$ ($q \ge 2$) &  & $\diamond$\textsuperscript{\ref{itm:g2trivonly}} & --\textsuperscript{\ref{itm:anyg}} \\ \cline{2-2}\cline{4-5}
 & $[a^{p}, b^{q}, n]$ ($q > p \ge 2$) &  & $\checkmark$* & \makecell{$\checkmark$ ($\gcd(p,q)=1$)\textsuperscript{\ref{itm:anyg}}\\--\,\, ($\gcd(p,q)\ne 1$)} \\ \cline{2-5}
 &$[a^{p}, b^{q}, c^{r}]$ ($p, q, r \ge 2$) & $\diamond$\textsuperscript{\ref{itm:g2nontrivonly}} & $\checkmark$* & $\diamond$ \\
\hline
\end{tabular}\\
\begin{flushleft}
\hspace{1.5em}$n$: number of edges of the dessin d'enfant \\
\hspace{1.5em}$\checkmark$: always occurs,\q --: never occurs,\q $\diamond$: depends on the passport.
\end{flushleft}
\begin{flushleft}
In this table, we assume that $n\ge 2$. If $n=1$ is allowed, the unique dessin
with passport $[1,1,1]$ belongs to both the
$\AD\cong\{1\}$ column and the
$\lvert\Aut\msD\rvert=n$ (regular) column.
\end{flushleft}
\HL
\caption{Distribution of automorphism groups for uniform passports}
\label{tab:genusautd}
\end{table}

Let $n$ be the number of edges of the dessin. Asterisks (*) indicate statements that remain conjectural at the present stage.
The numbers in parentheses refer to the corresponding items listed below.

\begin{enumerate}
\item\label{itm:g0reg} Genus~$0$: all uniform dessins are regular and have automorphism groups of order~$n$,
isomorphic to their monodromy groups.
\item\label{itm:g1nreg} Genus~$1$: a uniform passport admits a nonregular dessin if and only if it does not contain
an element of type $n^{1}$ (that is, it is not unicellular).
\item\label{itm:g1triv} Genus~$1$: no uniform passport admits a dessin with trivial automorphism group.
\item\label{itm:g2triv} Genus $\ge 2$: every passport of the form $[a^{p}, b^{q}, n]$ (the unicellular case), or any
permutation thereof, admits a dessin with trivial automorphism group, and hence admits a nonregular dessin.
\item\label{itm:anyg} Any genus: a passport of the form $[a^{p}, b^{q}, n]$ ($p, q \ge 1$), or any permutation thereof, admits a regular dessin if and only if $\gcd(p, q) = 1$.
\item\label{itm:g2nnn} Genus $\ge 2$: a passport $[n,n,n]$ admits a \dde\ $\msD$ with $\vt{\AD}=r$
if and only if $r$ is a divisor of $n$. Therefore, it admits a nonregular \dde with nontrivial automorphism group
if and only if $n$ is composite.
\item\label{itm:g2trivonly} Genus $\ge 2$: there exist passports $[b^{q}, b^{q}, n]$ ($q \ge 2$) that admit only dessins with
trivial automorphism group.
\item\label{itm:g2nontrivonly} Genus $\ge 2$: there exist passports $[a^{p}, b^{q}, c^{r}]$ ($p,q,r \ge 2$) that admit no dessins with
trivial automorphism group.
\end{enumerate}

In \cite{Ohnishi26}, we proved \ref{itm:g0reg}, \ref{itm:g1nreg}, \ref{itm:g1triv}, and \ref{itm:anyg},
and partially proved \ref{itm:g2triv} for passports of the form $[n,b^{q},n]$ with $q\ge1$.

In \cite{Ohnishi2606}, we obtained a further partial result toward \ref{itm:g2triv} for passports of the form
$[b^{q},b^{q},n]$ with $q\ge2$.
Moreover, we proved \ref{itm:g2nnn} and provided examples for \ref{itm:g2trivonly} and
\ref{itm:g2nontrivonly}, which serve as counterexamples to our initial conjectural picture.

These results clarify several aspects of the regularity and automorphism groups of uniform dessins.
In our previous work, we described the distribution of automorphism groups in genera~$0$ and~$1$,
established a necessary and sufficient condition for a uniform unicellular passport
(that is, one containing $n^{1}$) to admit a regular dessin,
and provided a group-theoretic characterization of regularity for general uniform passports.

For unicellular passports of genus at least~$2$, our previous work also developed a new method
for proving the existence of dessins with trivial automorphism groups by estimating the number
of relevant permutations. Together with an analysis of the centralizer of the monodromy group,
this counting method yielded the corresponding existence results.

\subsection{Main Results}

The main results of this paper are as follows.
\HL

In \secref{sec:class4}, we prove the following theorem for the remaining family of uniform unicellular passports,
thereby completing the proof of statement \ref{itm:g2triv} in \tabref{tab:genusautd}.
\HL

\noindent
\textbf{\thmref{thm:class4}.}
\textit{\thmclassfour}
\HL

Combining this theorem with the results of \cite{Ohnishi26, Ohnishi2606} corresponding to \ref{itm:g2triv}, we obtain the
following general theorem for uniform unicellular passports. 
\HL

\noindent
\textbf{\thmref{thm:class1-4}.}
\textit{\thmclassonetofour}
\HL

For future work, two directions seem particularly promising.

The first is to establish the conjectural statements summarized in
\tabref{tab:genusautd}. It would be desirable to determine the conditions under which uniform passports
admit a nonregular dessin with nontrivial automorphism group. In addition, one needs to characterize
when a passport $[a^{p},b^{q},c^{r}]$ $(p,q,r\ge2)$ admits a dessin with trivial automorphism group.

The second is to obtain quantitative results on the distribution of regular
dessins and automorphism groups.
\OL

\section{Preliminaries}
\label{sec:preliminaries}

The definitions and results in this section are based primarily on \cite{Jones16} and \cite{Adrianov20}.

\begin{definition}[\Dde]\cite[Definition~2]{Jones16}
\label{def:dessin}
A \emph{\dde}, or simply a \emph{dessin}, is a map consisting of a connected, finite, bipartite graph embedded
in a connected, compact, oriented surface without boundary.
Here, a bipartite graph is a graph whose vertices can be colored black and white in such a way that each edge
joins a black vertex to a white vertex.
\end{definition}

Since a compact Riemann surface provides a suitable surface on which a \dde can be embedded,
one can draw a dessin corresponding to a \belyi pair $(X, \B)$, as described in
\secref{sec:belyi}.

\HL

A \emph{partition} $\la$ of a positive integer $n$, denoted by $\la \vdash n$, is a multiset of
positive integers whose sum is $n$, where the order of the parts is irrelevant.
 
\begin{definition}[Passport of a dessin]\cite[Definition~2.10]{Adrianov20}
\label{def:passport}
Let $n$ be the number of edges of a \dde.
The triple $[\lambda_{0}, \lambda_{1}, \lambda_{\infty}]$ of partitions $\lambda_{0}, \lambda_{1}, \lambda_{\infty} \vdash n$,
which correspond respectively to the valencies of the black vertices, the white vertices, and
the faces of the dessin, is called a $\emph{passport}$ of the dessin.
\end{definition}

Both dessins in \figref{fig:dessins-ex} have passport $[621^{4},42^{4},12]$,
while the dessins in \figref{fig:regular} have passport
$[4^{2},2^{4},4^{2}]$.
Throughout this paper, we use the standard abbreviated notation for partitions:
for example, $621^{4}=(6,2,1,1,1,1)$, $42^{4}=(4,2,2,2,2)$, $4^{2}=(4,4)$, and $2^{4}=(2,2,2,2)$.

Since $\lambda_{0}, \lambda_{1}, \lambda_{\infty}$ are partitions of $n$, we have

\begin{align}
\label{eq:lambdan}
\lvert \lambda_{0} \rvert = \lvert \lambda_{1} \rvert = \lvert \lambda_{\infty} \rvert = n,
\end{align}
where $\lvert \la \rvert$ denotes the sum of the parts of the partition $\la$. 

The numbers of vertices, faces, and edges are $l(\lambda_{0}) + l(\lambda_{1})$,
$l(\lambda_{\infty})$, and $n$, respectively, where $l(\la)$ denotes the number of parts
of the partition $\la$.
Therefore, Euler's formula gives
\begin{align}
l(\lambda_{0}) + l(\lambda_{1}) + l(\lambda_{\infty}) - n = 2 - 2g,
\end{align}
where $g$ is the genus of the underlying curve $X$. Hence
\begin{align}
\label{eq:lambdag}
g = \f{n - (l(\lambda_{0}) + l(\lambda_{1}) + l(\lambda_{\infty}))}{2} + 1.
\end{align}
Since $g$ is a nonnegative integer, it follows that
\begin{align}
\label{eq:lambdal}
l(\lambda_{0}) + l(\lambda_{1}) + l(\lambda_{\infty}) \le  n + 2, \q
l(\lambda_{0}) + l(\lambda_{1}) + l(\lambda_{\infty}) \equiv n \npmod{2}.
\end{align}

Note that not every triple of partitions satisfying \eqref{eq:lambdan} and \eqref{eq:lambdal}
is realized by a \dde.
For example, although $[2^{2}, 2^{2}, 31]$ formally satisfies these conditions and would yield genus~$0$,
there exists no dessin with this passport.
Similarly, the passport $[3^{2}, 3^{2}, 42]$, which would correspond to genus~$1$, admits no dessin.

These facts can be proved by showing that there exists no corresponding monodromy group
(see \defref{def:monog}) in each case.

\begin{definition}[Uniform passports and dessins]\cite[Remark~3.2]{Jones16}
The passport of a \dde given by
$[\lambda_{0}, \lambda_{1}, \lambda_{\infty}] = [a_{1}\cdots a_{p},\ b_{1}\cdots b_{q},\ c_{1}\cdots c_{r}]$
is called \emph{uniform} if
\begin{align}
a_{1} = \cdots = a_{p}, \q b_{1} = \cdots = b_{q}, \q c_{1} = \cdots = c_{r},
\end{align}
\ie if the passport takes the form $[a^{p}, b^{q}, c^{r}]$.

A \dde is called \emph{uniform} if it has a uniform passport.
\end{definition}

For a uniform passport $[a^{p}, b^{q}, c^{r}]$ with $n = pa = qb = rc$, realized by a dessin of genus $g$,
\eqref{eq:lambdag} becomes
\begin{align}
\label{eq:genus}
g = \f{n-(p+q+r)}{2} + 1.
\end{align}

\begin{definition}[Monodromy group of a \dde]\cite[2.1.1]{Jones16}
\label{def:monog}
Define two permutations $x$ and $y$ acting on the set of edges $E$ of a \dde $\msD$ as follows.
For each edge $e \in E$, define $x \cdot e$ and $y \cdot e$ to be the next edges around the unique black vertex and
the unique white vertex incident to $e$, respectively, following the counterclockwise orientation.

The \emph{monodromy group} of $\msD$ is the subgroup $G = \gen{x, y}$ generated by $x$ and $y$ in
the symmetric group $\Sym(E)$ of all permutations of $E$.
\end{definition}

Since a \dde is a connected graph, it follows that any edge in $E$ can be mapped to any other edge by the action of $G$.
Therefore, the monodromy group $G$ acts transitively on $E$.

\begin{rem}
Throughout this paper, permutations act on the left, and products are composed from right to left,
\ie $(xy) \cdot e = x \cdot (y \cdot e)$ for an edge $e$.
\end{rem}

Another important observation concerning the monodromy group is that, in addition to $x$ and $y$ encoding
the cycles of the black and white vertices, respectively, the permutation $z = (xy)^{-1}$ encodes the cycles
corresponding to the faces.
If a cycle of $z$ has length $c$, then the corresponding face has valency $c$
in the passport. Equivalently, the boundary of a face of valency $c$ traverses
$2c$ edge-sides, counted with multiplicity.

It is known that if a transitive permutation group $G \le \Sym(E)$ on a finite set $E$
is generated by two elements, then there exists a \dde whose monodromy group is $G$;
this follows from the correspondence between constellations and
ramified coverings of the sphere \cite[Proposition~1.2.15 and Construction~1.2.17]{Lando04}.

Therefore, studying groups that act transitively is essential for investigating the properties of \ddes and, consequently, of algebraic curves.

\begin{definition}[Regular dessins]\cite[2.1.2]{Jones16}
\label{def:regular}
A \dde is called \emph{regular} if its monodromy group acts freely (that is, semiregularly) on the set of its edges.
\end{definition}

This implies that the monodromy group of a regular dessin acts freely and transitively --- hence regularly ---
on its edges.

The following criterion relates the order of the monodromy group to the number of edges.

\begin{lemma}
\label{lem:order-n}
A \dde with $n$ edges is regular if and only if the order of its monodromy group is $n$.
\end{lemma}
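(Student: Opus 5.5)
The plan is to use the definition of regularity (\defref{def:regular}) together with the transitivity of the monodromy group $G=\gen{x,y}$ on the edge set $E$, where $\vt{E}=n$. The key tool is the orbit--stabilizer theorem. For any edge $e\in E$, transitivity gives $\vt{G}=\vt{G\cdot e}\,\vt{G_{e}}=n\,\vt{G_{e}}$, where $G_{e}$ is the stabilizer of $e$. Hence $\vt{G}=n$ holds exactly when $G_{e}$ is trivial for one (equivalently, every) edge $e$. The equivalence of ``one'' and ``every'' comes from the fact that stabilizers of points in a transitive action are conjugate: $G_{g\cdot e}=gG_{e}g^{-1}$.

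The two directions then follow.

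\emph{Regular implies order $n$.} Suppose the dessin is regular, so $G$ acts freely on $E$. Then every stabilizer $G_{e}$ is trivial. The identity above gives $\vt{G}=n\cdot 1=n$.

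\emph{Order $n$ implies regular.} Suppose $\vt{G}=n$. Then $\vt{G_{e}}=1$ for every $e\in E$, since the orbit--stabilizer formula applies to each edge. So no nonidentity element of $G$ fixes any edge. This means $G$ acts freely, i.e.\ semiregularly, on $E$, and the dessin is regular.

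I do not expect a real obstacle here. The only point needing care is that transitivity is what lets the orbit equal all of $E$, and this was noted right after \defref{def:monog} as a consequence of the connectedness of the dessin.
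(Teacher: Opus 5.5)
Your proof is correct. Connectedness gives transitivity, so orbit--stabilizer yields $|G| = n\,|G_e|$ for every edge $e$. Hence $|G| = n$ is equivalent to all stabilizers being trivial, which is exactly the free action that defines regularity; the conjugacy remark is harmless but unnecessary, since the formula already holds edge by edge.

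The paper gives no argument here and simply cites \cite[Lemma~2.7]{Ohnishi26}. Your orbit--stabilizer argument is the standard self-contained proof of this fact.
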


\begin{proof}
See \cite[Lemma~2.7]{Ohnishi26}.
\end{proof}

\begin{definition}[Automorphism group of a \dde]\cite[2.1.2]{Jones16}
For a \dde $\msD$ with edge set $E$, an \emph{automorphism} of $\msD$ is a permutation of $E$ that preserves the cyclic
order of edges around each vertex, that is, which commutes with $x$ and $y$, or equivalently, commutes with $G$.
Thus we can define the \emph{automorphism group} of $\msD$ as the centralizer:

\begin{align}
\Aut \msD \ceq C_{\Sym(E)}(G) &= \{ c \in \Sym(E) \mid cg = gc\ \text{for all}\ g \in G \} \\
\label{eq:cxcy}
&= \{ c \in \Sym(E) \mid cx = xc,\ cy = yc \},
\end{align}
where $\Sym(E)$ is the symmetric group of all permutations of $E$.

When $\Aut \msD \cong \{1 \}$, we say that $\msD$ has a \emph{trivial} automorphism group.
\end{definition}

Since $\gen{x, y} = \gen{x, z} = \gen{y, z}$, where $z = (xy)^{-1}$,
both the monodromy group and the automorphism group are invariant under
permuting the roles of black vertices, white vertices, and faces.

The automorphism group of a \dde has the following properties:

\begin{itemize}
\item $\AD$ acts freely on the edges of $\msD$.
\item $\lvert \AD \rvert$ divides the number of edges.
\item If $\lvert \AD \rvert$ equals the number of edges, then $\AD \cong G$.
\end{itemize}

The following proposition is a basic result describing the relationship between regularity and automorphism groups.

\begin{prop}
\label{prop:regaut}
A \dde $\msD$ is regular if and only if its monodromy group $G$ is isomorphic to $\AD$.
\end{prop}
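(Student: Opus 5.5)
The converse direction reduces to a comparison of orders, combined with \lemref{lem:order-n}. The forward direction uses the standard fact that the centralizer of a regular permutation group is isomorphic to the group itself. Throughout, let $G=\gen{x,y}\le\Sym(E)$ with $\vt{E}=n$, and recall that $G$ is transitive on $E$.

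First I will check that $\AD=C_{\Sym(E)}(G)$ acts freely on $E$, since the bullet list above states this without proof. Suppose $c\in\AD$ fixes some edge $e_{0}$. Then for every $g\in G$ we have $c\cdot(g\cdot e_{0})=g\cdot(c\cdot e_{0})=g\cdot e_{0}$. By transitivity every edge has the form $g\cdot e_{0}$, so $c$ fixes every edge and $c=1$. In particular the orbit map $c\mapsto c\cdot e_{0}$ is injective, so $\vt{\AD}\le n$. Transitivity of $G$ gives $\vt{G}\ge n$, by orbit–stabilizer.

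($\Leftarrow$) Assume $G\cong\AD$. Then
\begin{align}
n\le\vt{G}=\vt{\AD}\le n,
\end{align}
so $\vt{G}=n$, and $\msD$ is regular by \lemref{lem:order-n}.

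($\Rightarrow$) Assume $\msD$ is regular, so $G$ acts regularly on $E$. Fix $e_{0}\in E$ and identify $E$ with $G$ via the bijection $g\mapsto g\cdot e_{0}$. Under this identification, the action of $G$ on $E$ becomes left multiplication on $G$. The centralizer of the left regular representation in $\Sym(G)$ is the right regular action. To see this, let $c$ commute with all left multiplications and put $h=c(1)$. Then $c(g)=c(g\cdot 1)=g\,c(1)=gh$, so $c$ is right multiplication by $h$. Conversely, every right multiplication commutes with every left multiplication. The map $h\mapsto(g\mapsto gh^{-1})$ is then an isomorphism from $G$ onto this centralizer. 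Hence $\AD\cong G$.

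No step here is a serious obstacle. The only point needing care is the freeness of $\AD$, which is exactly what yields the inequality $\vt{\AD}\le n$ used in the converse.
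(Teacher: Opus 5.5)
Your proof is correct. The paper gives no argument of its own here and simply cites \cite[Theorem~2.1]{Jones16}, so your proposal is a genuinely different, self-contained route rather than a reconstruction of the paper's proof.

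The standard route in the literature takes an arbitrary transitive $G$ with point stabilizer $H=G_{e_0}$ and identifies $C_{\Sym(E)}(G)$ with $N_G(H)/H$. Regularity then amounts to $H=1$. The converse holds because $\vt{N_G(H)/H}=\vt{G}$ forces $H=1$.

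You avoid this general identification. For ($\Leftarrow$) you need only the bound $\vt{\AD}\le n$, which you derive from freeness of the centralizer. In doing so you also justify the bullet-point claim that the paper states without proof. For ($\Rightarrow$) you need only the centralizer of the left regular representation, which you compute directly.

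Your argument is shorter and fully elementary for this particular statement. The $N_G(H)/H$ route costs a little more but gives more: an explicit description of $\AD$ for every dessin, regular or not.

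A minor simplification: in ($\Leftarrow$) you do not need \lemref{lem:order-n}. Once $\vt{G}=n$, orbit--stabilizer applied to the transitive action gives trivial stabilizers, so $G$ acts freely, which is the paper's definition of regularity.
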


\begin{proof}
See \cite[Theorem~2.1]{Jones16}.
\end{proof}

This also implies that a \dde $\msD$ is regular if and only if $\lvert \AD \rvert$ equals the number of edges.

Note that regularity does not imply $G = \Aut \msD$;
these groups act on the edges of the dessin in different ways.

\begin{prop}
\label{prop:reguni}
If a \dde is regular, then it has a uniform passport.
\end{prop}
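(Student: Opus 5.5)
We must prove that a regular dessin has a uniform passport. By \lemref{lem:order-n} and the definition of regularity, the monodromy group $G=\langle x,y\rangle$ of a regular dessin $\msD$ with edge set $E$ acts regularly on $E$. Fix an edge $e_0\in E$. The map $\phi\colon G\to E$, $g\mapsto g\cdot e_0$, is then a bijection, and it is $G$-equivariant for the left regular action of $G$ on itself: $\phi(hg)=h\cdot\phi(g)$. So we may identify $E$ with $G$, with $x$, $y$ and $z=(xy)^{-1}$ acting by left multiplication.

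Under this identification, the cycle of $x$ through the edge $g\cdot e_0$ is the orbit $\{x^k g \mid k\in\Z\}$, which is the right coset $\langle x\rangle g$. Its length is $\lvert\langle x\rangle g\rvert=\lvert\langle x\rangle\rvert$, the order of $x$ in $G$. This does not depend on $g$, so every cycle of $x$ has the same length $a=\operatorname{ord}(x)$. Hence every black vertex has valency $a$, and $\lambda_0=a^{p}$ with $p=n/a$.

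The same argument applied to $y$ and to $z=(xy)^{-1}$ shows that all cycles of $y$ have length $\operatorname{ord}(y)$ and all cycles of $z$ have length $\operatorname{ord}(z)$. So $\lambda_1=b^{q}$ and $\lambda_\infty=c^{r}$, and the passport $[a^{p},b^{q},c^{r}]$ is uniform.

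The only point needing care is that the cycle lengths are computed in $\Sym(E)$ while the coset argument works inside $G$. The equivariant bijection $\phi$ handles this: it conjugates the permutation $x$ of $E$ to left multiplication by $x$ on $G$, so the two have the same cycle type. The argument also uses transitivity as well as freeness, since transitivity is what makes $\phi$ surjective.

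A shorter alternative uses that $\AD$ acts transitively on $E$ when $\lvert\AD\rvert=n$. Because automorphisms commute with $x$, each $c\in\AD$ maps cycles of $x$ bijectively onto cycles of $x$, preserving length; by transitivity any two cycles of $x$ then have equal length. The same holds for $y$ and $z$.
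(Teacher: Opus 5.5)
Your proof is correct. The paper gives no argument of its own here and simply cites \cite[Proposition~4.42]{Girondo12} and \cite[Proposition~2.10]{Ohnishi26}; both of your versions are standard arguments for this fact: in the regular action, the cycles of $x$, $y$, $z$ are cosets of the corresponding cyclic subgroups, or alternatively $\AD$ acts transitively on edges and preserves cycle lengths. One small remark: freeness alone already gives the conclusion, because the stabilizer in $\gen{x}$ of any edge is trivial, so every $\gen{x}$-orbit has exactly $\operatorname{ord}(x)$ elements; transitivity is needed only for your particular identification $E\cong G$.
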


\begin{proof}
See \cite[Proposition~4.42]{Girondo12} and \cite[Proposition~2.10]{Ohnishi26}.
\end{proof}

The converse of this proposition does not hold in general.
Thus, uniformity is necessary but not sufficient for regularity.

\begin{exa}
The uniform passport $[4^{2}, 2^{4}, 4^{2}]$ (genus~$1$) corresponds to the two dessins shown in
\secref{sec:regaut}, \figref{fig:regular}.
The dessin on the left is regular, and both its monodromy group and its automorphism group are
isomorphic to $C_{4} \times C_{2}$, the direct product of cyclic groups of orders $4$ and $2$, respectively.
In contrast, the dessin on the right is not regular; its monodromy group is isomorphic to $(C_{4} \times C_{2}) \rtimes C_{2}$
and has order~$16$, whereas its automorphism group has order~$4$ and is isomorphic to $C_{2} \times C_{2}$.
\end{exa}

To prove the main results, we use the following two lemmas concerning the gamma and digamma functions
to obtain upper and lower bounds for the numbers of relevant permutations.

\begin{lemma}
\label{lem:gamma}
For the gamma function $\Gamma$ and $x>0$, the following hold.
\begin{enumerate}
\item\label{itm:gamma-1} For $0 < s < 1$,
\begin{align}
&\f{x}{(x+s)^{1-s}} < \f{\Gamma(x+s)}{\Gamma(x)} < x^{s}.
\end{align}
\item\label{itm:gamma-2}
\begin{align}
\f{\Gamma(x+1)}{\Gamma(x)} = x.
\end{align}
\item\label{itm:gamma-3} For $s > 1$,
\begin{align}
x^{s} < \f{\Gamma(x+s)}{\Gamma(x)} < (x+s)^{s}.
\end{align}
\end{enumerate}
\end{lemma}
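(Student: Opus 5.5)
The three inequalities follow from the log-convexity of $\Gamma$ on $(0,\infty)$, i.e.\ the convexity of $\log\Gamma$, together with the functional equation $\Gamma(x+1)=x\Gamma(x)$. Part~\ref{itm:gamma-2} is exactly this functional equation, so nothing more is needed there. For parts~\ref{itm:gamma-1} and~\ref{itm:gamma-3}, the plan is to bound $\log\Gamma(x+s)-\log\Gamma(x)$ by slopes of secants or tangents of $\log\Gamma$. Equivalently, one can use that $\psi=\Gamma'/\Gamma$ is strictly increasing, since $\psi'(t)=\sum_{k\ge0}(t+k)^{-2}>0$.

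For part~\ref{itm:gamma-1} I would use the standard Gautschi–Wendel argument. Write $x+s=(1-s)x+s(x+1)$. Strict convexity of $\log\Gamma$ gives
\begin{align}
\Gamma(x+s)<\Gamma(x)^{1-s}\Gamma(x+1)^{s}=x^{s}\Gamma(x),
\end{align}
which is the upper bound. For the lower bound, write $x+1=s(x+s)+(1-s)(x+s+1)$. Then
\begin{align}
x\Gamma(x)=\Gamma(x+1)<\Gamma(x+s)^{s}\Gamma(x+s+1)^{1-s}=(x+s)^{1-s}\Gamma(x+s).
\end{align}
Rearranging gives $x/(x+s)^{1-s}<\Gamma(x+s)/\Gamma(x)$. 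Strictness holds because $\log\Gamma$ is strictly convex and $0<s<1$.

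For part~\ref{itm:gamma-3} I would use the mean value theorem. We have
\begin{align}
\log\Gamma(x+s)-\log\Gamma(x)=s\,\psi(\xi)
\end{align}
for some $\xi\in(x,x+s)$, so it suffices to show $\log x<\psi(\xi)<\log(x+s)$.
\begin{itemize}
\item The upper bound follows from the classical inequality $\psi(t)<\log t$ for $t>0$, which holds because $\log t-\psi(t)=\int_0^\infty\bigl(\tfrac{1}{u}-\tfrac{1}{1-e^{-u}}+1\bigr)e^{-tu}\,du>0$, or by a convexity argument. This gives $\psi(\xi)<\log\xi<\log(x+s)$.
\item The lower bound is the main obstacle, because $\psi(\xi)>\log x$ fails when $\xi$ is close to $x$. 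So I would not rely on the mean value point. Instead I would reduce to an integer shift plus a fractional one. Write $s=m+t$ with $m\ge1$ an integer and $0\le t<1$. Then
\begin{align}
\frac{\Gamma(x+s)}{\Gamma(x)}=x(x+1)\cdots(x+m-1)\cdot\frac{\Gamma(x+m+t)}{\Gamma(x+m)}.
\end{align}
Apply the lower bound of part~\ref{itm:gamma-1} to the last factor with base point $x+m$, which gives a factor greater than $(x+m)^{t}$ (using $(x+m)/(x+m+t)^{1-t}\ge(x+m)^t$? not quite). Cleaner is to use the convexity lower bound $\Gamma(x+m+t)/\Gamma(x+m)\ge (x+m)^{t}\cdot\bigl((x+m)/(x+m+t)\bigr)^{1-t}$ and absorb the slack into the strictly larger integer factors $x+j>x$. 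Concretely, since $m\ge1$, the product satisfies $x(x+1)\cdots(x+m-1)\ge x^{m}\cdot(1+1/x)^{m-1}$, and the extra factors $(x+j)/x$ compensate the loss $(x+m+t)^{-(1-t)}$ relative to $x^{t}$. I expect this bookkeeping to be the step needing the most care.
\end{itemize}

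For the upper bound in part~\ref{itm:gamma-3} an alternative route avoids $\psi$ entirely. Using the same decomposition, the upper bound of part~\ref{itm:gamma-1} gives
\begin{align}
\frac{\Gamma(x+s)}{\Gamma(x)}<x(x+1)\cdots(x+m-1)(x+m)^{t}<(x+s)^{m+t}=(x+s)^{s}.
\end{align}
Similarly, for the lower bound it is simplest to note directly that $\Gamma(x+m+t)/\Gamma(x+m)>(x+m)/(x+m+t)^{1-t}\ge x^{t}$ when $m\ge1$. This holds because $(x+m)\ge x^{t}(x+m+t)^{1-t}$ follows from weighted AM–GM, since $x^{t}(x+m+t)^{1-t}\le tx+(1-t)(x+m+t)\le x+m$ for $m\ge1$ and $0\le t<1$. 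Combined with $x(x+1)\cdots(x+m-1)\ge x^{m}$, with strict inequality coming from either factor, this yields $x^{s}<\Gamma(x+s)/\Gamma(x)$. This reduction to part~\ref{itm:gamma-1} makes the whole lemma follow from log-convexity alone.
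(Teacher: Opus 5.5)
Your final argument for part~(3), which splits $s=m+t$, bounds $\Gamma(x+m+t)/\Gamma(x+m)$ using part~(1), and applies weighted AM--GM to get $(x+m)/(x+m+t)^{1-t}\ge x^{t}$, is essentially the paper's proof; the paper goes through $(x+m-1)^{r}$ and treats $t=0$ as a separate case (there part~(1) does not apply and strictness comes from $m\ge 2$). For part~(1) you give the standard strict log-convexity proof of Wendel's inequality, which the paper only cites, and the one slip is in the mean-value side route you end up not needing: the correct representation is $\log t-\psi(t)=\int_0^\infty\bigl(\frac{1}{1-e^{-u}}-\frac{1}{u}\bigr)e^{-tu}\,du$, not the integrand you wrote, although $\psi(t)<\log t$ itself is true.
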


\begin{proof}
Part~\ref{itm:gamma-1} is Wendel's inequality \cite{Wendel48},
and \ref{itm:gamma-2} is the standard functional equation for the gamma function.
We prove \ref{itm:gamma-3}.

Assume that $s > 1$ and write $s = m + r$, where $m \in \Zp$ and $0 \le r < 1$.

By \ref{itm:gamma-2},
\begin{align}
\label{eq:gammaxs}
\f{\Gamma(x+s)}{\Gamma(x)} &= \prod_{i=0}^{m-1}(x+i)\cdot \f{\Gamma(x+m+r)}{\Gamma(x+m)}.
\end{align}

If $r = 0$, then $m \ge 2$, and
\begin{align}
&\f{\Gamma(x+s)}{\Gamma(x)} = \prod_{i=0}^{m-1}(x+i), \\
&x ^{m} < \prod_{i=0}^{m-1}(x+i) < (x+m)^{m}.
\end{align}
Hence
\begin{align}
x^{s} = x^{m} < \f{\Gamma(x+s)}{\Gamma(x)} < (x+m)^{m} = (x+s)^{s}.
\end{align}

If $r > 0$, then by \eqref{eq:gammaxs} and \ref{itm:gamma-1},
\begin{align}
\f{\Gamma(x+s)}{\Gamma(x)} &< \prod_{i=0}^{m-1}(x+i)\cdot(x+m)^{r} \\
\label{eq:gammaxs1}
&< (x+s)^{m}(x+s)^{r} = (x+s)^{s}.
\end{align}

By \ref{itm:gamma-1},
\begin{align}
\label{eq:gammaxmr}
\f{\Gamma(x+m+r)}{\Gamma(x+m)} > \f{x+m}{(x+m+r)^{1-r}}.
\end{align}
On the other hand, by the weighted AM-GM inequality,
\begin{align}
(x+m-1)^{r}(x+m+r)^{1-r} &< r(x+m-1)+(1-r)(x+m+r) \\
&= x+m -r^{2} < x+m,
\end{align}
hence
\begin{align}
\f{x+m}{(x+m+r)^{1-r}} > (x+m-1)^{r} \ge x^{r}.
\end{align}
Together with \eqref{eq:gammaxmr}, this gives
\begin{align}
\f{\Gamma(x+m+r)}{\Gamma(x+m)} > x^{r}.
\end{align}
Thus, by \eqref{eq:gammaxs},
\begin{align}
\label{eq:gammaxs2}
\f{\Gamma(x+s)}{\Gamma(x)} > \prod_{i=0}^{m-1}(x+i) \cdot x^{r} \ge x^{m}x^{r} = x^{s}.
\end{align}
By \eqref{eq:gammaxs1} and \eqref{eq:gammaxs2} we obtain
\begin{equation*} 
x^{s} < \f{\Gamma(x+s)}{\Gamma(x)} < (x+s)^{s}.\qedhere
\end{equation*}
\end{proof}
\HL

\begin{lemma}
\label{lem:digamma}
Let $\psi$ denote the digamma function defined by
\begin{align}
\psi(x)=\dv{x}\log\Gamma(x)=\f{\Gamma'(x)}{\Gamma(x)}.
\end{align}
Then
\begin{align}
\log\left(x-\f{1}{2}\right)<\psi(x)<\log x \q \left(x>\f{1}{2}\right).
\end{align}
\end{lemma}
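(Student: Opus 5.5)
The plan is to integrate the definition of the digamma function against the integral representation of $\psi$, or more simply to exploit the recurrence $\psi(x+1)=\psi(x)+1/x$ together with the concavity of $\log$. I would use the standard series/integral facts: $\psi(x+1)=\psi(x)+\f{1}{x}$, $\psi'(x)=\sum_{k\ge0}(x+k)^{-2}>0$, $\psi''<0$, and $\psi(x)-\log x\to 0$ as $x\to\infty$. The strategy is a telescoping argument. Set $f(x)=\log x-\psi(x)$. Then
\begin{align}
f(x)-f(x+1)=\log x-\log(x+1)+\f{1}{x}=\f{1}{x}-\log\left(1+\f{1}{x}\right)>0,
\end{align}
so $f(x)>f(x+1)>\cdots>f(x+N)$. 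Since $f(x+N)\to 0$ as $N\to\infty$, we get $f(x)>0$, that is, $\psi(x)<\log x$ for all $x>0$.

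For the lower bound, set $h(x)=\psi(x)-\log(x-\f12)$ for $x>\f12$. Then
\begin{align}
h(x)-h(x+1)=-\f{1}{x}+\log\f{x+\f12}{x-\f12}.
\end{align}
I would show this difference is positive. Writing $t=\f{1}{2x}\in(0,1)$, it equals $\log\f{1+t}{1-t}-2t=2\left(\f{t^{3}}{3}+\f{t^{5}}{5}+\cdots\right)>0$. Hence $h(x)>h(x+N)$ for every $N$. Moreover $h(x+N)\to 0$, because $\psi(y)-\log y\to0$ and $\log y-\log(y-\f12)\to 0$. Therefore $h(x)>0$.

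The only step requiring care is the limit $\psi(y)-\log y\to0$ as $y\to\infty$. I would justify it from \lemref{lem:gamma}\ref{itm:gamma-1} rather than from Stirling's formula. Since $\log\Gamma$ is convex, $\psi$ is increasing, so by the mean value theorem there are points $\xi\in(y-1,y)$ and $\eta\in(y,y+1)$ with $\psi(y-1)\le\psi(\xi)=\log\f{\Gamma(y)}{\Gamma(y-1)}=\log(y-1)$ and $\log y=\psi(\eta)\le\psi(y+1)$. This gives $\log(y-1)-\f{1}{y}\le\psi(y)\le\log y+\f{1}{y-1}$, using $\psi(y-1)=\psi(y)-\f{1}{y-1}$ and $\psi(y+1)=\psi(y)+\f1y$. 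From these bounds the limit is immediate. Once this limit is established, both strict inequalities follow from the two monotone telescoping arguments above.
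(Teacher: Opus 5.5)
Your proof is correct, but it follows a different route from the paper. The paper quotes the integral representations $\psi(x)=\int_0^\infty\bigl(e^{-t}/t-e^{-xt}/(1-e^{-t})\bigr)\,dt$ and $\log x=\int_0^\infty (e^{-t}-e^{-xt})/t\,dt$. This makes $\log x-\psi(x)$ and $\psi(x)-\log(x-\frac12)$ into integrals of $e^{-xt}$ against explicit kernels. Positivity then reduces to the pointwise inequalities $t>1-e^{-t}$ and $2\sinh(t/2)>t$, with no limiting argument.

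You instead use only three ingredients: the recurrence $\psi(x+1)=\psi(x)+1/x$, the elementary inequalities $\log(1+u)<u$ and $\log\frac{1+t}{1-t}>2t$, and the limit $\psi(y)-\log y\to0$, which you obtain from log-convexity of $\Gamma$. Strictness survives the limit because each telescoping step is strict.

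Your argument is more elementary, since no Gauss-type integral formula is needed. In effect it produces series analogues of the paper's integrals, e.g.\ $\log x-\psi(x)=\sum_{k\ge0}\bigl(\frac{1}{x+k}-\log(1+\frac{1}{x+k})\bigr)$. The paper's argument is shorter once the representations are granted. It also shows slightly more: the differences are Laplace transforms of positive functions, hence completely monotonic.

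Two minor remarks. First, the limit step as you wrote it uses the functional equation \lemref{lem:gamma}\ref{itm:gamma-2} together with log-convexity, not Wendel's inequality \ref{itm:gamma-1} as you announce. Wendel would also work: take logarithms, divide by $s$, and let $s\to0^+$ to get $\log x-\frac1x\le\psi(x)\le\log x$. Second, your mean value step already gives the upper bound outright. Since $\psi$ is strictly increasing and $\log y=\psi(\eta)$ with $\eta>y$, the telescoping is really needed only for the lower bound.
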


\begin{proof}
It is well known (see, e.g., \cite[p.~28 and Proposition~9.6.43]{Cohen07}) that
\begin{align}
\label{eq:logpsipsi}
\psi(x) &= \int_{0}^{\infty}\left(\f{e^{-t}}{t}-\f{e^{-xt}}{1-e^{-t}}\right)dt, \\
\label{eq:logpsilog}
\log x &= \int_{0}^{\infty}\f{e^{-t}-e^{-xt}}{t}dt
\end{align}
for $x > 0$. Hence
\begin{align}
\label{eq:logpsi1}
\log x - \psi(x) = \int_{0}^{\infty}e^{-xt}\left(\f{1}{1-e^{-t}}-\f{1}{t}\right)dt\q(x > 0).
\end{align}

Let $f(t) = t - (1-e^{-t})$. Then
\begin{align}
f(0) &= 0,\\
f'(t) &= 1-e^{-t} > 0 \q (t > 0).
\end{align}
Hence $t > 1-e^{-t}$ for $t > 0$. Therefore,
\begin{align}
\f{1}{1-e^{-t}}-\f{1}{t} > 0\q(t > 0).
\end{align}
Thus, by \eqref{eq:logpsi1},
\begin{align}
\label{eq:logpsi2}
\log x - \psi(x) > 0 \q (x > 0).
\end{align}

By \eqref{eq:logpsipsi} and \eqref{eq:logpsilog},
\begin{align}
\psi(x) - \log\left(x-\f{1}{2}\right)
&= \int_{0}^{\infty}\left(\f{e^{-\left(x-\f{1}{2}\right)t}}{t}-\f{e^{-xt}}{1-e^{-t}}\right)dt \\
\label{eq:logpsi3}
&= \int_{0}^{\infty}e^{-xt}\left(\f{e^{\f{t}{2}}}{t}-\f{1}{1-e^{-t}}\right)dt \q \left(x > \f{1}{2}\right).
\end{align}

Let $g(u) = \sinh(u) - u$. Then
\begin{align}
g(0) &= 0 , \\
g'(u) &= \cosh(u) - 1 > 0 \q (u > 0).
\end{align}
Hence
\begin{align}
g(u) > 0\q (u > 0).
\end{align}
Therefore, for $t > 0$,
\begin{align}
e^{\f{t}{2}}-e^{-\f{t}{2}} = 2\sinh\f{t}{2} > 2\cdot \f{t}{2} = t,
\end{align}
hence
\begin{align}
1-e^{-t} > \f{t}{e^{\f{t}{2}}},
\end{align}
and therefore,
\begin{align}
\f{e^{\f{t}{2}}}{t}-\f{1}{1-e^{-t}} > 0 \q (t > 0).
\end{align}
Thus, by \eqref{eq:logpsi3} we obtain
\begin{align}
\psi(x) - \log\left(x-\f{1}{2}\right) > 0 \q \left(x > \f{1}{2}\right).
\end{align}
Combining this with \eqref{eq:logpsi2}, we conclude that
\begin{equation*} 
\log\left(x-\f{1}{2}\right)<\psi(x)<\log x \q \left(x>\f{1}{2}\right).\qedhere
\end{equation*}
\end{proof}
\OL


\section{Counting Arguments for Trivial Automorphism Groups}
\label{sec:estimation}

\subsection{Setup and Notation}
\label{sec:setup}

In this section, we consider uniform passports $[n, b^{q}, a^{p}]$ of genus $g \ge 2$
with $n=pa=qb$ and $2 \le p<q<n$ (equivalently, $2 \le b<a<n$).
We take $x$, $y$, and $z=(xy)^{-1}$ to have cycle types
$(n)$, $(b^{q})$, and $(a^{p})$, respectively.
Since $x$ is an $n$-cycle, $\gen{x}$ acts transitively on the edge set $E$.
Hence the monodromy group $G=\gen{x,y}$ also acts transitively on $E$.

By \eqref{eq:genus}, the genus is
\begin{align}
\label{eq:genus-uc}
g &= \f{n-(p+q+1)}{2} + 1 =  \f{n-(p+q)+1}{2}.
\end{align}

Since the monodromy group and the automorphism group are invariant under permutations of
the entries in the passport, it suffices to consider passports of this form;
the case $[a^{p}, b^{q}, n]$ follows by symmetry.

Let $S_{n}$ be the symmetric group on $E = \{1,\ldots,n\}$,
and fix an $n$-cycle $x\in S_{n}$, for example $x=\sigman$.
We define the following subsets of $S_{n}$:
\begin{align}
\label{eq:tncd}
\begin{aligned}
T &= T(b,q) \ceq \{ y\in S_{n} \mid y \text{ has cycle type } (b^{q}) \}, \\
N &= N(b,q,a) \ceq \{ y \in T(b,q) \mid (xy)^{-1} \text{ has cycle type } (a^{p}) \}, \\
C &= C(b,q) \ceq \{ y \in T(b,q) \mid \AD \ncong \{1\} \}, \\
D &= D(n) \ceq \{ y \in S_{n} \mid \AD \ncong \{1\} \},
\end{aligned}
\end{align}
where $\msD$ denotes the \dde corresponding to the permutation pair $(x,y)$, whose
monodromy group is $\gen{x,y}$.

\figref{fig:tncd} illustrates the relationships among these sets.
The hatched region corresponds to $C(b, q)$.

\begin{figure}[htbp]
\centering
\includegraphics[width=0.41\textwidth]{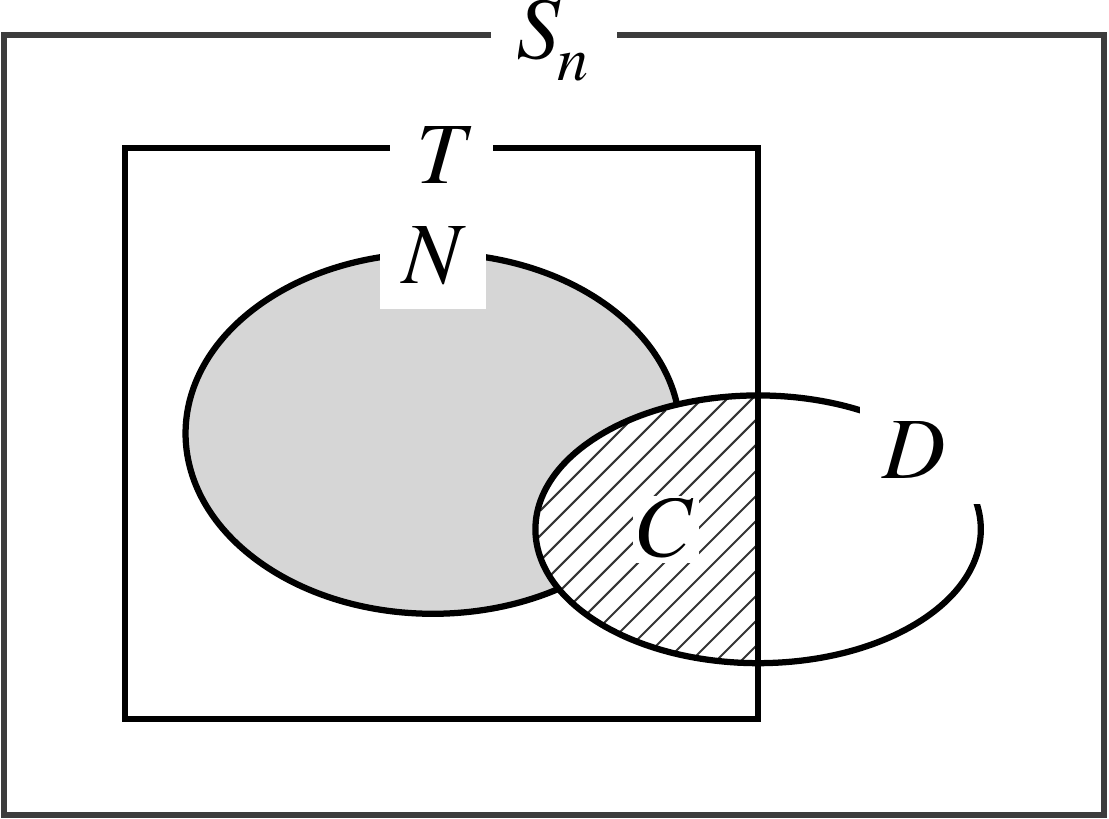} 
\caption{The sets related to the passport $[n, b^{q}, a^{p}]$}
\label{fig:tncd}
\end{figure}

For any $t \in S_{n}$ satisfying $txt^{-1}=x$, the permutation pairs
$(x,y)$ and $(x,tyt^{-1})$ define isomorphic dessins.
Thus, the cardinalities of the above sets count permutations and do not,
in general, equal the numbers of isomorphism classes of the corresponding dessins.
Nevertheless, comparing these cardinalities yields sufficient conditions
for the existence of dessins with trivial automorphism group.

The existence of a dessin $\msD$ with passport $[n, b^{q}, a^{p}]$ and trivial automorphism group
is equivalent to
\begin{align}
N \cap C^{c} \ne \emptyset
\end{align}
(the gray region in \figref{fig:tncd}). Since $C  = D \cap T$ and $N \subset T$, this is equivalent to
\begin{align}
N \cap D^{c} \ne \emptyset.
\end{align}
Thus, to prove the existence of such a dessin,
it suffices to show that
\begin{align}
\vt{N} > \vt{C}.
\end{align}
A stronger but simpler sufficient condition is
\begin{align}
\vt{N} > \vt{D}.
\end{align}
\HL


\subsection{\texorpdfstring{Lower Bound for $\vt{N}$}{Lower Bound for |N|}}
\label{sec:Nlower}

A useful tool for estimating
\begin{align}
\vt{N(b, q, a)} = \# \{ y \in T(b,q) \mid (xy)^{-1} \text{ has cycle type } (a^{p}) \}
\end{align}
is the following theorem from \cite{Goupil98}.

\begin{thm}
\label{thm:goupil}
Let $\lambda = (\lambda_{1}, \dotsc, \lambda_{l})$ and $\mu = (\mu_{1}, \dotsc, \mu_{m})$
be partitions of $n$.
Define the genus associated with the pair $(\lambda, \mu)$ by

\begin{align}
g = \frac{n-(l+m)+1}{2},
\end{align}
and assume that $g \in \Zz$.

Let $C_\lambda$ and $C_\mu$ denote the conjugacy classes in $S_{n}$
consisting of permutations of cycle types $\lambda$ and $\mu$,
respectively. 
Let $c_{\lambda \mu}^{n}$ denote the number of solutions $(\sigma, \rho) \in C_{\lambda} \times C_{\mu}$
to the equation $\sigma \rho = \pi$,
where $\pi$ is a fixed $n$-cycle in $S_{n}$.
Then $c_{\lambda \mu}^{n}$ is given by

\begin{align}
c_{\la \mu}^{n} = \f{n}{z_{\la}z_{\mu}2^{2g}}\!\sum_{\substack{g_{1},g_{2}\ge 0\\ g_{1}+g_{2}=g}}(l+2g_{1}-1)! (m+2g_{2}-1)!
\!\!\sum_{\substack{(i_{1},\dotsc,i_{l})\vDash g_{1}\\(j_{1},\dotsc,j_{m})\vDash g_{2}}}
\prod_{k=1}^{l}\binom{\la_{k}}{2i_{k}+1}\prod_{k=1}^{m}\binom{\mu_{k}}{2j_{k}+1}.
\end{align}
Here $P \vDash n$ denotes a composition of $n$, that is, a finite sequence of nonnegative integers
summing to $n$, where the order of the terms matters. We adopt the convention that $\binom{a}{b}=0$ if $b>a$.
Moreover, for a partition $\lambda = 1^{\A_{1}}\cdots n^{\A_{n}}$, where $\A_{i}$ denotes the multiplicity of $i$, we define
$z_{\lambda} = \prod_{i} \A_{i}! \, i^{\A_{i}}$.
\end{thm}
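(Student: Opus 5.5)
This is the theorem of Goupil and Schaeffer \cite{Goupil98}, and the paper uses it as a black box. If I had to reprove it, I would go through the character theory of $S_{n}$ and then convert the result into the stated form with a beta-integral trick.

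\emph{Step 1: Frobenius formula and hook characters.} The Frobenius formula gives
\begin{align}
c_{\la\mu}^{n}=\f{\vt{C_{\la}}\vt{C_{\mu}}}{n!}\sum_{\chi}\f{\chi(\la)\chi(\mu)\chi((n))}{\chi(1)} .
\end{align}
By the Murnaghan--Nakayama rule, $\chi^{\nu}((n))$ vanishes unless $\nu$ is a hook $(n-k,1^{k})$, in which case it equals $(-1)^{k}$. Using $\chi^{(n-k,1^k)}(1)=\binom{n-1}{k}$, the sum reduces to
\begin{align}
c_{\la\mu}^{n}=\f{n!}{z_{\la}z_{\mu}}\sum_{k=0}^{n-1}\f{(-1)^{k}\chi_{k}(\la)\chi_{k}(\mu)}{\binom{n-1}{k}},
\end{align}
where $\chi_{k}$ is the hook character.

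\emph{Step 2: generating function for hook characters.} Applying Murnaghan--Nakayama again, removing the parts of $\la$ one at a time as border strips, I expect
\begin{align}
(1+u)\sum_{k}\chi_{k}(\la)(-u)^{k}=\prod_{i=1}^{l}\bigl(1-(-u)^{\la_{i}}\bigr)\cdot(\text{sign normalisation}),
\end{align}
so that, up to sign, $\sum_{k}\chi_{k}(\la)u^{k}=\prod_{i}(1-(-u)^{\la_{i}})/(1+u)$, and similarly for $\mu$.

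\emph{Step 3: remove $1/\binom{n-1}{k}$ and extract.} I would write
\begin{align}
\f{1}{\binom{n-1}{k}}=n\int_{0}^{1}t^{k}(1-t)^{n-1-k}\,dt .
\end{align}
This turns the sum over $k$ into a Hadamard-type product of the two generating functions, evaluated inside an integral. To match the stated formula, I would substitute variables so that each factor becomes
\begin{align}
(1+w)^{\la_{i}}-(1-w)^{\la_{i}}=2\sum_{j\ge 0}\binom{\la_{i}}{2j+1}w^{2j+1} .
\end{align}
Expanding such a product over the $l$ parts produces exactly the sums over compositions $(i_{1},\dots,i_{l})$ weighted by $\prod_{k}\binom{\la_{k}}{2i_{k}+1}$. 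The remaining one-dimensional integrals are beta integrals. They should produce the factorials $(l+2g_{1}-1)!$ and $(m+2g_{2}-1)!$, the normalisation $2^{-2g}$, and the prefactor $n/(z_{\la}z_{\mu})$.

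\emph{Step 4: reduce to the genus constraint.} Parity and degree considerations should then force the total exponent $2g_{1}+2g_{2}$ to equal $n+1-l-m=2g$. This collapses everything to the constraint $g_{1}+g_{2}=g$ in the statement.

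\emph{Main obstacle.} Steps 1 and 2 are routine. The hard part is Step 3: choosing the right substitution so that the cross terms between the $\la$-side and the $\mu$-side decouple into independent sums over $g_{1}$ and $g_{2}$, and keeping track of all signs and powers of $2$. I would first try Goupil and Schaeffer's change of variables, namely writing the generating functions in terms of $(1+w)/(1-w)$, and verify the result against small cases such as $n=6$, $[6,3^{2},6]$, where the four dessins of \figref{fig:genus2-n6} give a check on the count.
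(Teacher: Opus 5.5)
The paper does not prove this statement; it only cites \cite[Theorem~2.1]{Goupil98}, so your first sentence already does what the paper does. Your reconstruction, however, has a genuine gap exactly where you locate the difficulty.

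Steps~1 and~2 are correct. Step~2 even holds exactly, with no sign ambiguity: $\sum_k\chi_k(\lambda)u^k=\det(1+u\sigma\,|\,V)=\prod_i\bigl(1-(-u)^{\lambda_i}\bigr)/(1+u)$, because $\wedge^kV$ is the hook representation $(n-k,1^k)$ of the standard representation $V$. The problem is Step~3. The beta integral only rewrites $S=\sum_k(-1)^k\binom{n-1}{k}^{-1}\chi_k(\lambda)\chi_k(\mu)$ as the integral of a Hadamard product, and a change of variables does not act in any controlled way on a Hadamard product. So the decoupling of the $\lambda$- and $\mu$-sides, the constraint $g_1+g_2=g$, the factorials and the factor $2^{-2g}$ are all hoped for rather than derived. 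As written, Steps~3--4 are not a proof.

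The missing idea is that $S$ is the classical $SL_2$-invariant (apolar) pairing of binary forms of degree $d=n-1$:
\[
[f,h]=\sum_{k=0}^{d}(-1)^k\binom{d}{k}^{-1}f_k\,h_{d-k},\qquad f=\sum_k f_kx^ky^{d-k},\quad h=\sum_k h_kx^ky^{d-k}.
\]
It satisfies $[f\circ\phi,h\circ\phi]=(\det\phi)^d\,[f,h]$ for every linear substitution $\phi$. To check this, test it on $f=(\alpha x+\beta y)^d$ and $h=(\gamma x+\delta y)^d$, where $[f,h]=(\beta\gamma-\alpha\delta)^d$. Take $f=\prod_i(y^{\lambda_i}-(-x)^{\lambda_i})/(x+y)$ and $h=\prod_j(x^{\mu_j}-(-y)^{\mu_j})/(x+y)$. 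Then $f_k=\chi_k(\lambda)$ and $h_{d-k}=\chi_k(\mu)$, hence $S=[f,h]$.

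Now apply your substitution in homogeneous form, $\phi(w,v)=(w-v,\,w+v)$, with $\det\phi=2$ and $x+y=2w$:
\begin{align*}
f\circ\phi&=2^{l-1}\sum_{g_1\ge0}\Lambda_{g_1}\,w^{l+2g_1-1}v^{n-l-2g_1},\\
h\circ\phi&=(-1)^{n+m}\,2^{m-1}\sum_{g_2\ge0}M_{g_2}\,w^{m+2g_2-1}v^{n-m-2g_2}.
\end{align*}
Here $\Lambda_{g_1}$ and $M_{g_2}$ are the inner composition sums of the statement. Everything now falls out of the pairing:
\begin{itemize}
\item It matches the coefficient of $w^k$ in $f\circ\phi$ only with that of $w^{d-k}$ in $h\circ\phi$, which forces $g_1+g_2=g$.
\item The weight $\binom{n-1}{l+2g_1-1}^{-1}$ produces $(l+2g_1-1)!\,(m+2g_2-1)!/(n-1)!$.
\item The sign $(-1)^{l-1+n+m}$ equals $+1$, since $n+l+m-1=2g+2(l+m-1)$.
\item The powers of two combine as $2^{l-1}2^{m-1}2^{-(n-1)}=2^{-2g}$.
\end{itemize}
Multiplying by $n!/(z_\lambda z_\mu)$ gives the stated formula, with no further integrals needed.

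For your $n=6$ sanity check, weight each dessin by $n/\lvert\AD\rvert$, which is its number of realizations $y$ for fixed $x$. The four dessins of $[6,3^2,6]$ then give $1+2+3+6=12$, not $4$. This matches the value of the formula for $\lambda=(6)$, $\mu=(3,3)$.
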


\begin{proof}
See \cite[Theorem~2.1]{Goupil98}.
\end{proof}

For the passport $[n,b^{q},a^{p}]$, fix an $n$-cycle $x$.
By \thmref{thm:goupil},
$c_{(a^{p})(b^{q})}^{n}$ is the number of pairs $(\sigma,\rho)$ such that
$\sigma$ and $\rho$ have cycle types $(a^{p})$ and $(b^{q})$, respectively, and satisfy $\sigma\rho=x$.
Moreover,
\begin{align}
\sigma\rho=x \iff x\rho^{-1}\sigma^{-1}=1.
\end{align}
Since a permutation and its inverse have the same cycle type,
$c_{(a^{p})(b^{q})}^{n}$ is precisely the number of pairs $(\sigma,\rho)$
of cycle types $(a^{p})$ and $(b^{q})$, respectively, such that
$x\rho^{-1}\sigma^{-1}=1$.
If we set $y=\rho^{-1}$, then each such pair is uniquely determined by $y$, and the condition is equivalent to $(xy)^{-1}$
having cycle type $(a^{p})$. Thus,
\begin{align}
c_{(a^{p})(b^{q})}^{n}=\vt{N(b,q,a)}.
\end{align}

Therefore, by \thmref{thm:goupil},
\begin{multline}
\vt{N(b, q, a)} = c_{(a^{p})(b^{q})}^{n}
= \f{n}{2^{2g}a^{p}p!b^{q}q!}\sum_{\substack{g_{1},g_{2}\ge 0\\ g_{1}+g_{2}=g}}(p+2g_{1}-1)! (q+2g_{2}-1)! \\
\label{eq:Nbqa}
\cdot\underbrace{\sum_{\substack{(i_{1},\dotsc,i_{p})\vDash g_{1}\\(j_{1},\dotsc,j_{q})\vDash g_{2}}}
\underbrace{\prod_{k=1}^{p}\binom{a}{2i_{k}+1}\prod_{k=1}^{q}\binom{b}{2j_{k}+1}}_{\mcP}}_{\mcS},
\end{multline}
where
\begin{align}
g = \f{n - (p+q)+1}{2}.
\end{align}

In the inner sum $\mcS$, the product $\mcP$ is zero if and only if $a<2i_{k}+1$ for some $1\le k\le p$, or
$b<2j_{k}+1$ for some $1\le k\le q$.
Therefore,
\begin{align}
\mcP > 0 &\iff a \ge 2i_{k}+1 \text{ for all } 1 \le k \le p, \text{ and } b \ge 2j_{k}+1 \text{ for all } 1 \le k \le q.
\end{align}
Hence, $\mcS$ is zero if and only if there is no pair of compositions satisfying the above condition, that is,
\begin{align}
\mcS = 0 \iff g_{1} > p\fl{\f{a-1}{2}} \text{ or } g_{2} > q\fl{\f{b-1}{2}}.
\end{align}
Since $q \ge p+1$ by assumption,
\begin{align}
p\fl{\f{a-1}{2}} - g_{1} &\ge p\cdot\f{a-2}{2} - g_{1} \ge p\cdot\f{a-2}{2} - g \\
&= \f{n-2p}{2} - \f{n - (p+q)+1}{2} = \f{q-(p+1)}{2} \ge 0.
\end{align}
Thus, the condition $g_{1} > p\fl{(a-1)/2}$ cannot occur, and hence
\begin{align}
\label{eq:inner0}
\mcS = 0 \iff g_{2} > q\fl{\f{b-1}{2}}.
\end{align}

Taking only the term corresponding to $(g_{1},g_{2})=(0,g)$ in the outer sum,
we obtain the lower bound
\begin{align}
\vt{N(b, q, a)}
&\ge \f{n}{2^{2g}a^{p}p!b^{q}q!}(p-1)! (q+2g-1)!
\sum_{\substack{(i_{1},\dotsc,i_{p})\vDash 0\\(j_{1},\dotsc,j_{q})\vDash g}}
\prod_{k=1}^{p}\binom{a}{2i_{k}+1}\prod_{k=1}^{q}\binom{b}{2j_{k}+1} \\
&= \f{n}{2^{2g}a^{p}p!b^{q}q!}(p-1)! (q+n-(p+q)+1-1)!
\sum_{(j_{1},\dotsc,j_{q})\vDash g}
\prod_{k=1}^{p}\binom{a}{1}\prod_{k=1}^{q}\binom{b}{2j_{k}+1} \\
\label{eq:N0g}
&= \f{a(n-p)!}{2^{2g}b^{q}q!}
\sum_{(j_{1},\dotsc,j_{q})\vDash g}
\prod_{k=1}^{q}\binom{b}{2j_{k}+1}.
\end{align}
This lower bound is used in \secref{sec:class4b3}.

On the other hand, taking only the term corresponding to $(g_{1},g_{2})=(g,0)$ in the outer sum,
we obtain the lower bound
\begin{align}
\vt{N(b, q, a)} &\ge \f{n}{2^{2g}a^{p}p!b^{q}q!}(p+2g-1)! (q-1)!
\sum_{\substack{(i_{1},\dotsc,i_{p})\vDash g\\(j_{1},\dotsc,j_{q})\vDash 0}}
\prod_{k=1}^{p}\binom{a}{2i_{k}+1}\prod_{k=1}^{q}\binom{b}{2j_{k}+1} \\
&= \f{n}{2^{2g}a^{p}p!b^{q}q!}(p+n-(p+q)+1-1)! (q-1)!
\sum_{(i_{1},\dotsc,i_{p})\vDash g}
\prod_{k=1}^{p}\binom{a}{2i_{k}+1}\prod_{k=1}^{q}\binom{b}{1} \\
\label{eq:Ng0}
&= \f{b(n-q)!}{2^{2g}a^{p}p!}
\sum_{(i_{1},\dotsc,i_{p})\vDash g}
\prod_{k=1}^{p}\binom{a}{2i_{k}+1}.
\end{align}

In particular, when $b=2$, the condition \eqref{eq:inner0} becomes
\begin{align}
\mcS = 0 \iff g_{2} > q\fl{\f{2-1}{2}} = 0.
\end{align}
Thus, the only nonzero term in the outer sum is the one corresponding to
$(g_{1},g_{2})=(g,0)$.
Hence equality holds in \eqref{eq:Ng0}, and
\begin{align}
\vt{N(2, q, a)} &= \f{2(n-q)!}{2^{2g}a^{p}p!}
\sum_{(i_{1},\dotsc,i_{p})\vDash g}
\prod_{k=1}^{p}\binom{a}{2i_{k}+1} \\
\label{eq:Ng0b2}
&= \f{q!}{2^{q-p}a^{p}p!}
\sum_{(i_{1},\dotsc,i_{p})\vDash g}
\prod_{k=1}^{p}\binom{a}{2i_{k}+1}.
\end{align}
This exact value is used in \secref{sec:class4b2age5}.

In \eqref{eq:Ng0}, we restrict the sum to compositions whose parts are as equal as possible.
Let
\begin{align}
\label{eq:uv}
u = \fl{\f{g}{p}} = \fl{\f{n-p-q+1}{2p}} = \fl{\f{pa-p-q+1}{2p}} = \fl{\f{a-1}{2} - \f{q-1}{2p}}, \q v = g - pu.
\end{align}
Then the sum includes all compositions obtained by arranging $p-v$ copies of $u$ and $v$ copies of $u+1$.
Restricting the sum to these compositions, we obtain the simpler lower bound
\begin{align}
\label{eq:NB}
\vt{N(b, q, a)} &\ge \f{b(n-q)!}{2^{n-p-q+1}a^{p}p!}B(a, p, q), \\
\label{eq:Bapq}
B(a, p, q) &\ceq
\begin{dcases}
\binom{p}{v}\binom{a}{2u+1}^{p-v}\binom{a}{2u+3}^{v} & (v > 0), \\
\binom{a}{2u+1}^{p} & (v = 0).
\end{dcases}
\end{align}
\HL

\noindent
(i) When $v > 0$

Since $p \le q-1$,
\begin{align}
u = \f{g-v}{p} < \f{g}{p} = \f{a-1}{2} - \f{q-1}{2p} \le \f{a-1}{2} - \f{1}{2} = \f{a-2}{2}.
\end{align}
Since $a$ and $u$ are integers, we have $u \le (a-3)/2$, hence
\begin{align}
a \ge 2u+3.
\end{align}
Moreover,
\begin{align}
u = \fl{\f{g}{p}} > \f{g}{p}-1,
\end{align}
hence
\begin{align}
1 \le v &= g-pu < p, \\
\binom{p}{v} &\ge \binom{p}{1} = p.
\end{align}
\HL

\noindent
(i-1) When $a > 2u+3$, we have
\begin{align}
1 \le 2u+1 < 2u+3 < a,
\end{align}
hence
\begin{align}
\binom{a}{2u+1} \ge \binom{a}{1},\q \binom{a}{2u+3} \ge \binom{a}{1}.
\end{align}
Therefore,
\begin{align}
B(a, p, q) &= \binom{p}{v}\binom{a}{2u+1}^{p-v}\binom{a}{2u+3}^{v}
\ge p\binom{a}{1}^{p-v}\binom{a}{1}^{v} = pa^{p}.
\end{align}
\HL

\noindent
(i-2) When $a = 2u+3$,  we have
\begin{align}
B(a, p, q) &= \binom{p}{v}\binom{a}{2u+1}^{p-v}\binom{a}{2u+3}^{v}
= \binom{p}{v}\binom{a}{a-2}^{p-v}\binom{a}{a}^{v} \\
&= \binom{p}{v}\binom{a}{a-2}^{p-v} \ge p\binom{a}{a-2}^{p-v} = p\left(\f{a(a-1)}{2}\right)^{p-v}.
\end{align}
Moreover, since $q \ge p+1$,
\begin{align}
p - v &= p - (g-pu) = p - \f{pa-(p+q)+1}{2} + p\cdot\f{a-3}{2}
= \f{q-1}{2} \ge \f{p}{2}.
\end{align}
Thus,
\begin{align}
B(a, p, q) \ge p\left(\f{a(a-1)}{2}\right)^{\f{p}{2}}.
\end{align}
\HL

\noindent
(ii) When $v = 0$

Since $u = g/p$ and $g \ge 2$,
\begin{align}
2u+1 &= \f{2g}{p} + 1  > 1, \\
2u+1 &= \f{2g}{p} + 1 = \f{n-p-q+1}{p}+1 = a - 1-\f{q-1}{p} + 1 < a.
\end{align}
Since $2u+1$ is an integer, we have
\begin{align}
2 \le 2u+1 \le a-1,
\end{align}
hence
\begin{align}
B(a, p, q) &= \binom{a}{2u+1}^{p} \ge \binom{a}{1}^{p} = a^{p}.
\end{align}

By (i-1), (i-2), and (ii),
\begin{align}
B(a, p, q) \ge 
\begin{dcases}
pa^{p} & (v > 0,\ a > 2u+3), \\
p\left(\f{a(a-1)}{2}\right)^{\f{p}{2}} & (v > 0,\ a=2u+3), \\
a^{p} & (v=0).
\end{dcases}
\end{align}
Since $p \ge 2$, we have $pa^{p}>a^{p}$. Therefore
\begin{align}
B(a, p, q) &\ge \min \left\{ a^{p},\ p\left(\f{a(a-1)}{2}\right)^{\f{p}{2}} \right\}
= a^{p}\min \left\{ 1,\ p\left(\f{a-1}{2a}\right)^{\f{p}{2}} \right\}.
\end{align}
Here,
\begin{align}
p\left(\f{a-1}{2a}\right)^{\f{p}{2}} < p\left(\f{a}{2a}\right)^{\f{p}{2}} = \f{p}{2^{\f{p}{2}}}.
\end{align}

Let $f(p) = p/2^{p/2}$. Then
\begin{align}
f(2) &= 1,\q f(3) = \f{3}{2\sqrt{2}} > 1, \\
\f{f(p+1)}{f(p)} &= \f{p+1}{2^{\f{p+1}{2}}}\cdot\f{2^{\f{p}{2}}}{p} = \f{1+\f{1}{p}}{\sqrt{2}}
\le \f{1+\f{1}{3}}{\sqrt{2}} < 1 \q (p \ge 3).
\end{align}
Hence $f(p)$ is strictly decreasing for $p\ge3$, and therefore
\begin{align}
p\left(\f{a-1}{2a}\right)^{\f{p}{2}} \le \f{3}{2\sqrt2} \q (p \ge 2).
\end{align}
Thus,
\begin{align}
\f{2\sqrt2}{3}p\left(\f{a-1}{2a}\right)^{\f{p}{2}} \le 1.
\end{align}
Therefore,
\begin{align}
B(a,p,q) &\ge a^{p} \min\left\{1,\ p\left(\f{a-1}{2a}\right)^{\f{p}{2}}\right\}
\ge a^{p} \min\left\{1,\ \f{2\sqrt2}{3}p\left(\f{a-1}{2a}\right)^{\f{p}{2}}\right\} \\
&= \f{2\sqrt2}{3} a^{p}p \left(\f{a-1}{2a}\right)^{\f{p}{2}}.
\end{align}

Combining this with \eqref{eq:NB} and collecting the terms depending on $a$ and $p$, we obtain
\begin{align}
\vt{N(b, q, a)} &\ge \f{b(n-q)!}{2^{n-p-q+1}a^{p}p!}\cdot\f{2\sqrt{2}}{3}a^{p}p\left(\f{a-1}{2a}\right)^{\f{p}{2}} \\
\label{eq:Fnabp}
&= \f{\sqrt{2}b(n-q)!}{3\cdot 2^{n-q}}\cdot\f{2^{p}}{(p-1)!}\left(\f{a-1}{2a}\right)^{\f{p}{2}}.
\end{align}

Let
\begin{align}
h(p) \ceq \f{2^{p}}{(p-1)!}\left(\f{a-1}{2a}\right)^{\f{p}{2}}.
\end{align}
Then
\begin{align}
\f{h(p+1)}{h(p)} &= \f{2^{p+1}}{p!}\left(\f{a-1}{2a}\right)^{\f{p+1}{2}}\f{(p-1)!}{2^{p}}\left(\f{a-1}{2a}\right)^{-\f{p}{2}}
= \f{2}{p}\left(\f{a-1}{2a}\right)^{\f{1}{2}} < 1.
\end{align}
Hence $h(p)$ is strictly decreasing in $p$. Since $p \le q-1$,
\begin{align}
h(p) &\ge h(q-1) = \f{2^{q-1}}{(q-2)!}\left(\f{a-1}{2a}\right)^{\f{q-1}{2}}.
\end{align}
Moreover, since $a \ge b+1$,
\begin{align}
\f{a-1}{2a} = \f{1}{2} - \f{1}{2a} \ge \f{1}{2} - \f{1}{2(b+1)} = \f{b}{2(b+1)}.
\end{align}
Combining these estimates with \eqref{eq:Fnabp}, we obtain the lower bound
\begin{align}
\vt{N(b, q, a)} &\ge \f{\sqrt{2}b(n-q)!}{3\cdot 2^{n-q}}\cdot
\f{2^{q-1}}{(q-2)!}\left(\f{b}{2(b+1)}\right)^{\f{q-1}{2}} \\
\label{eq:Nbabq}
&= \f{b(q(b-1))!}{2^{q(b-2)}3\sqrt{2}(q-2)!}\left(\f{b}{2(b+1)}\right)^{\f{q-1}{2}},
\end{align}
which depends only on $b$ and $q$. This lower bound is used in \secref{sec:class4b4} and \secref{sec:class4bge5}.
\HL


\subsection{\texorpdfstring{Upper Bound for $\vt{D}$}{Upper Bound for |D|}}

To estimate
\begin{align}
\vt{D(n)} = \# \{ y \in S_{n} \mid \AD \ncong \{1\} \},
\end{align}
we use the following facts obtained in \cite{Ohnishi2606}.

\begin{prop}
\label{prop:Dk}
Let $n \in \Zp$ with $n \ge 2$, let $x \in S_{n}$ be an $n$-cycle, and for each $1 \le k \le n-1$, define
\begin{align}
\label{eq:defDk}
D_{k} \ceq \{\,c\in S_{n} \mid cx^{k}=x^{k} c\,\},
\end{align}
the centralizer of $x^{k}$ in $S_{n}$.
Then the following hold.
\begin{enumerate}
\item \label{itm:xcom1}
$D_1=\{x^{k} \mid 0\le k\le n-1\}$.
\item \label{itm:xcom2}
For any $1\le k,l\le n-1$, if $k\mid l$, then $D_{k}\subset D_l$.
\item \label{itm:xcom3}
For any $1\le k\le n-1$, let $m=\gcd(n,k)$. Then
\begin{align}
D_{k}&=D_{m}, \\
\vt{D_{k}}&=\vt{D_{m}}=\left(\f{n}{m}\right)^m m!.
\end{align}
\end{enumerate}
\end{prop}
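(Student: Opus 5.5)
We need to prove the proposition about $D_k$: (1) the centralizer of an $n$-cycle is $\langle x\rangle$; (2) $k\mid l$ implies $D_k\subset D_l$; (3) $D_k=D_m$ with $m=\gcd(n,k)$, and $|D_m|=(n/m)^m m!$.

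For \ref{itm:xcom1}, I will use the orbit--stabilizer theorem for conjugation. The conjugacy class of an $n$-cycle in $S_n$ consists of all $(n-1)!$ $n$-cycles. Hence the centralizer of $x$ has order $n!/(n-1)! = n$. Since $\langle x\rangle$ is contained in this centralizer and has order $n$, the two are equal, giving $D_1=\{x^k\mid 0\le k\le n-1\}$.

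For \ref{itm:xcom2}, write $l=kj$. If $c$ commutes with $x^k$, then it commutes with every power of $x^k$, in particular with $x^l=(x^k)^j$. Hence $D_k\subset D_l$.

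For \ref{itm:xcom3}, first note that $\langle x^k\rangle=\langle x^m\rangle$ inside the cyclic group $\langle x\rangle$ of order $n$. Indeed, $x^m$ is a power of $x^k$ by B\'ezout: $m=\alpha k+\beta n$ gives $x^m=(x^k)^\alpha$. Conversely, $x^k$ is a power of $x^m$ because $m\mid k$. A permutation commuting with a generator of a cyclic group commutes with the whole group, so the two centralizers coincide and $D_k=D_m$.

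It remains to compute $|D_m|$ for $m\mid n$, which I expect to be the only real step. The power $x^m$ of the $n$-cycle $x=(1\,2\,\ldots\,n)$ splits into $m$ disjoint cycles, each of length $n/m$, namely the residue classes modulo $m$. So $x^m$ has cycle type $((n/m)^m)$. The centralizer of a permutation of cycle type $\lambda=1^{\alpha_1}\cdots n^{\alpha_n}$ has order $z_\lambda=\prod_i \alpha_i!\, i^{\alpha_i}$, again by orbit--stabilizer and the standard count $n!/z_\lambda$ for the conjugacy class size. Here $z_\lambda=(n/m)^m m!$. Concretely, a centralizing element permutes the $m$ cycles among themselves, giving $m!$ choices, and for each cycle chooses one of $n/m$ rotational alignments, giving $(n/m)^m$ choices. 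Therefore $|D_k|=|D_m|=(n/m)^m m!$.
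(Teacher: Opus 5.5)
Your proof is correct and complete. The paper gives no argument here beyond citing \cite[Proposition~3.6]{Ohnishi2606}, but the ingredients it uses elsewhere are the same as yours. These are the B\'ezout argument identifying centralizers of powers of $x$, as in its derivation of $\bigcap_{\ell\in S}D_{n/\ell}=D_{n/\lambda_{S}}$. The other is the wreath-product description $C_{n/m}\wr S_{m}$ of the centralizer of a permutation of cycle type $((n/m)^{m})$, taken from \cite[Theorem~74.1]{Lipscomb} in the proof of Proposition~\ref{prop:Cnl}; its order $(n/m)^{m}m!$ is exactly your $z_{\lambda}$.
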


\begin{proof}
See \cite[Proposition~3.6]{Ohnishi2606}.
\end{proof}

\begin{cor}
\label{cor:ADtrivial}
Let $x, y \in S_{n}$, and assume that $x$ is an $n$-cycle. Let $\msD$ be the dessin corresponding to the permutation
pair $(x,y)$, whose monodromy group is $\gen{x,y}$.
Then the following hold.
\begin{enumerate}
\item \label{itm:ADtrivial1} $\AD = \{ x^{k} \mid 0 \le k \le n-1,\ x^{k}y = yx^{k} \}$.
\item \label{itm:ADtrivial2} $\AD$ is trivial if and only if $x^{k}y \ne yx^{k}$ for all $1 \le k \le n-1$.
\end{enumerate}
\end{cor}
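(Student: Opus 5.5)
The corollary follows from the definition of $\AD$ as a centralizer together with \propref{prop:Dk}\ref{itm:xcom1}.

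For \ref{itm:ADtrivial1}, recall that $\AD = C_{\Sym(E)}(\gen{x,y}) = \{c \in S_{n} \mid cx = xc,\ cy = yc\}$ by \eqref{eq:cxcy}. Any automorphism $c$ must commute with $x$, so $c \in D_{1}$. Since $x$ is an $n$-cycle, \propref{prop:Dk}\ref{itm:xcom1} gives $D_{1} = \{x^{k} \mid 0 \le k \le n-1\}$, hence $c = x^{k}$ for some $0 \le k \le n-1$. Every power of $x$ commutes with $x$, so the only remaining requirement is $x^{k}y = yx^{k}$. Both inclusions are then immediate:
\begin{align}
\AD = \{x^{k} \mid 0 \le k \le n-1,\ x^{k}y = yx^{k}\}.
\end{align}

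For \ref{itm:ADtrivial2}, the powers $x^{0}, \ldots, x^{n-1}$ are pairwise distinct because $x$ has order $n$, and $x^{0} = 1$ always lies in $\AD$. So by \ref{itm:ADtrivial1}, $\AD = \{1\}$ if and only if no $x^{k}$ with $1 \le k \le n-1$ commutes with $y$.

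No step here is difficult. The only point needing care is identifying the centralizer of the $n$-cycle $x$ with $\gen{x}$, and that is exactly \propref{prop:Dk}\ref{itm:xcom1}, which we cite.

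For later use, \ref{itm:ADtrivial2} combined with \propref{prop:Dk}\ref{itm:xcom2} and \ref{itm:xcom3} shows that it suffices to test $k$ dividing $n$, in fact only $k = n/\ell$ for primes $\ell \mid n$. This gives
\begin{align}
D(n) \subset \bigcup_{\ell \mid n,\ \ell\ \text{prime}} D_{n/\ell}, \q \vt{D(n)} \le \sum_{\ell} \ell^{n/\ell}\,(n/\ell)!,
\end{align}
which is the upper bound to compare against \eqref{eq:Nbabq}.
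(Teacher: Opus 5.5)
Your proof is correct and is essentially the intended argument. The paper only cites \cite[Corollary~3.7]{Ohnishi2606}, but the statement is a corollary precisely because $\AD \subset C_{S_{n}}(x) = D_{1} = \{x^{k} \mid 0 \le k \le n-1\}$ by \propref{prop:Dk}\ref{itm:xcom1}, after which only commutation with $y$ remains, and the distinctness of $x^{0},\dots,x^{n-1}$ gives \ref{itm:ADtrivial2}. Your closing reduction to $k=n/\ell$ is not needed here; it is just \propref{prop:D}, where in fact equality $D(n)=\bigcup_{\ell} D_{n/\ell}$ holds.
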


\begin{proof}
See \cite[Corollary~3.7]{Ohnishi2606}.
\end{proof}

\begin{prop}
\label{prop:D}
Let $n \in \Zp$ with $n \ge 2$, let $x \in S_{n}$ be an $n$-cycle, and for each $1 \le k \le n-1$, define $D_{k}$ by \eqref{eq:defDk}.
Let $D$ be the set of permutations $y \in S_{n}$ such that the corresponding dessin $\msD$,
with monodromy group $\gen{x,y}$, has nontrivial automorphism group. Then
\begin{align}
D &= \bigcup_{\ell\colon \text{prime},\, \ell\, \mid\, n} D_{\f{n}{\ell}}, \\
\label{eq:vtD}
\vt{D} &\le \sum_{\ell\colon \text{prime},\, \ell\, \mid\, n} \ell^{\f{n}{\ell}}\left(\f{n}{\ell}\right)!.
\end{align}
\end{prop}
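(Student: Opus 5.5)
Since $x$ is an $n$-cycle, \propref{prop:Dk} and \corref{cor:ADtrivial} reduce the question to when $y$ commutes with some nontrivial power of $x$. I would first show that every such $y$ commutes with $x^{n/\ell}$ for some prime $\ell\mid n$, which gives the union description of $D$. By \corref{cor:ADtrivial}\ref{itm:ADtrivial2}, $\AD$ is nontrivial if and only if $y\in D_k$ for some $1\le k\le n-1$. For such $k$, put $m=\gcd(n,k)$. Then $m$ is a proper divisor of $n$, and \propref{prop:Dk}\ref{itm:xcom3} gives $D_k=D_m$. Since $m<n$, we can choose a prime $\ell$ dividing $n/m$. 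Then $m\mid n/\ell$, and \propref{prop:Dk}\ref{itm:xcom2} gives $D_m\subset D_{n/\ell}$. Conversely, each $D_{n/\ell}$ with $\ell\mid n$ prime satisfies $1\le n/\ell\le n-1$, so $D_{n/\ell}\subset D$. Hence
\begin{align}
D=\bigcup_{1\le k\le n-1}D_k=\bigcup_{\ell\colon \text{prime},\,\ell\,\mid\, n}D_{\f{n}{\ell}}.
\end{align}
Here I use that $D$ is defined exactly as the set of $y$ lying in some $D_k$ with $1\le k\le n-1$.

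For the bound, I would use the union bound $\vt{D}\le\sum_{\ell}\vt{D_{n/\ell}}$. Since $n/\ell$ divides $n$, we have $\gcd(n,n/\ell)=n/\ell$. Applying the cardinality formula of \propref{prop:Dk}\ref{itm:xcom3} with $m=n/\ell$ then gives $\vt{D_{n/\ell}}=\ell^{n/\ell}(n/\ell)!$. Summing over the primes $\ell\mid n$ yields \eqref{eq:vtD}.

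No step here is a serious obstacle; the only care needed is the reduction from an arbitrary $k$ to a maximal proper divisor $n/\ell$ via $m=\gcd(n,k)$. The case $n=\ell$ prime is consistent with the argument: there $n/\ell=1$, and $D_1=\gen{x}$ has order $\ell^{1}\cdot 1!=n$, as \propref{prop:Dk}\ref{itm:xcom1} requires.
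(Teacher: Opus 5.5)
Your proof is correct: the reduction from an arbitrary $k$ to $m=\gcd(n,k)$ and then to a maximal proper divisor $n/\ell$ via \propref{prop:Dk}, followed by the union bound with $\vt{D_{n/\ell}}=\ell^{n/\ell}(n/\ell)!$, establishes both the union description and the bound. This is essentially the paper's route: the paper gives no argument here beyond citing \cite[Proposition~3.8]{Ohnishi2606}, and that result rests on the same two ingredients you use, \propref{prop:Dk} and \corref{cor:ADtrivial}.
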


\begin{proof}
See \cite[Proposition~3.8]{Ohnishi2606}.
\end{proof}

\begin{lemma}
\label{lem:Dupper}
Let $n \in \Zp$ with $n \ge 2$, let $x \in S_{n}$ be an $n$-cycle, and for each $1 \le k \le n-1$, define $D_{k}$ by \eqref{eq:defDk}.
Let
\begin{align}
\kappa_{1} \ceq \f{2623}{1894} = 1.384\ldots, \q \kappa_{2} \ceq \f{972}{947} = 1.026\ldots.
\end{align}
Then
\begin{align}
\label{eq:Dupper}
\vt{D(n)} \le \kappa_{1}\cdot 2^{\f{n}{2}}\left(\f{n}{2}\right)!,
\end{align}
where, for noninteger $M$, we define $M! \ceq \Gamma(M+1)$.

In particular, if $n$ is odd, then
\begin{align}
\label{eq:Dupperodd}
\vt{D(n)} \le \kappa_{2}\cdot3^{\f{n}{3}}\left(\f{n}{3}\right)!.
\end{align}
\end{lemma}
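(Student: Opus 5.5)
The plan is to begin from \propref{prop:D}, which gives
\begin{align}
\vt{D(n)} \le \sum_{\ell\colon \text{prime},\, \ell\,\mid\, n} \ell^{\f{n}{\ell}}\left(\f{n}{\ell}\right)!,
\end{align}
and to show that the term with the smallest prime divisor dominates the whole sum. Write $F(\ell) \ceq \ell^{n/\ell}\,\Gamma(n/\ell+1)$, viewed as a function of a real variable $\ell \ge 2$. The first step is to show that $F$ is decreasing in $\ell$, and in fact decreases geometrically fast.

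To control the decay, study $\log F(\ell) = \f{n}{\ell}\log\ell + \log\Gamma\!\left(\f{n}{\ell}+1\right)$. Its derivative in $\ell$ is
\begin{align}
\f{n}{\ell^{2}}\left(1-\log\ell-\psi\!\left(\f{n}{\ell}+1\right)\right).
\end{align}
By \lemref{lem:digamma}, $\psi(n/\ell+1) > \log(n/\ell+1/2)$. Hence the bracket is at most $1-\log(n+\ell/2) < 0$ once $n \ge 2$. So $F$ is decreasing.

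Decreasing alone is not enough to bound a sum with many terms. I will therefore compare consecutive admissible primes quantitatively, using \lemref{lem:gamma}. For a prime $\ell \ge 3$, compare $F(\ell)$ with $F(2)$, and bound the ratio
\begin{align}
\f{\Gamma(n/\ell+1)}{\Gamma(n/2+1)}
\end{align}
via the $s>1$ case of \lemref{lem:gamma}. This should give roughly
\begin{align}
\f{F(\ell)}{F(2)} \le \f{\ell^{n/\ell}}{2^{n/2}\,(n/\ell)^{\,n/2-n/\ell}},
\end{align}
which is tiny for large $n$.

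The sum over primes $\ell$ is then bounded by $F(2)\bigl(1+\sum_{\ell\ge3}F(\ell)/F(2)\bigr)$. It remains to show that the bracket is at most $\kappa_{1}$ uniformly in $n$. For large $n$ the tail sum is exponentially small, so the bracket is close to $1$. For small $n$, a finite check over $n$ up to some explicit threshold finishes, and the peculiar rational value $\kappa_{1} = 2623/1894$ is presumably the maximum attained at a specific small $n$ such as $n=6$ or $n=30$.

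One subtlety is that the right-hand side of \eqref{eq:Dupper} is stated for every $n$, including odd $n$ with $2\nmid n$. In that case every term satisfies $F(\ell)\le F(3) \le F(2)$, so the bound still holds, with $(n/2)!$ interpreted as $\Gamma(n/2+1)$.

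For odd $n$, the same argument is run with $3$ as the base prime. The sum is at most $F(3)\bigl(1+\sum_{\ell\ge5}F(\ell)/F(3)\bigr)$, and the bracket is bounded by $\kappa_{2}$. The decay is much faster here, which explains why $\kappa_{2}$ is close to $1$. Odd $n$ not divisible by $3$ are handled by monotonicity, since then every term is at most $F(5) < F(3)$.

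The main obstacle is the uniform bound on the ratio sum. This needs a clean analytic estimate valid for all $n$ beyond a modest threshold, together with a finite verification below it, done carefully enough to produce exactly the constants $\kappa_{1}$ and $\kappa_{2}$. I would first try bounding the number of prime divisors by $\log_{2} n$ and each ratio by its largest case $\ell=3$ (resp.\ $\ell=5$), and then check small $n$ directly.
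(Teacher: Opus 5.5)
Your outline is correct and closes, but it uses a different mechanism from the argument behind the paper's proof. The paper only cites \cite[Lemma~3.9]{Ohnishi2606}, and the constants indicate what that argument is. We have $\kappa_2=(1-\frac{25}{972})^{-1}$, where $\frac{25}{972}=\frac{5^3\cdot 3!}{3^5\cdot 5!}$ is the ratio of the $\ell=5$ and $\ell=3$ terms at $n=15$. Also $\kappa_1=1+\frac{3}{8}\kappa_2$, where $\frac{3}{8}=\frac{3^2\cdot 2!}{2^3\cdot 3!}$ is the ratio of the $\ell=3$ and $\ell=2$ terms at $n=6$. So the cited proof apparently bounds the odd-prime part of the sum by a geometric series starting from $3^{n/3}(n/3)!$, with successive prime-divisor terms shrinking by a factor at most $\frac{25}{972}$. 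It then uses $3^{n/3}(n/3)!\le\frac{3}{8}\,2^{n/2}(n/2)!$.

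You replace the geometric series by ``number of prime divisors times the largest ratio'' plus a finite check, and this works:
\begin{itemize}
\item With your Gamma-ratio bound, $\log_2 n\cdot 3^{n/3}/(2^{n/2}(n/3)^{n/6})$ is decreasing and at most $0.284<\kappa_1-1$ for $n\ge 12$.
\item The analogous quantity for $\ell=5$ against $\ell=3$ is below $\kappa_2-1$ for $n\ge 23$.
\item The largest value of $\sum_{\ell\mid n}\ell^{n/\ell}(n/\ell)!$ divided by $2^{n/2}(n/2)!$ is $\frac{11}{8}$, at $n=6$. For odd $n$, the largest value of that sum divided by $3^{n/3}(n/3)!$ is $\frac{997}{972}$, at $n=15$. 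Both are strictly below $\kappa_1$ and $\kappa_2$.
\end{itemize}
Your route therefore gives slightly better constants, at the price of explicit thresholds and a longer finite verification. The geometric-series route needs only the two extremal cases and produces $\kappa_1,\kappa_2$ in closed form. In particular, $\kappa_1$ and $\kappa_2$ are not maxima of your bracket, and you need not reproduce them exactly.

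One step needs repair. For odd $n$ in the first inequality, ``every term is at most $F(3)\le F(2)$'' does not bound a sum of several terms. Moreover, your bracket $1+\sum_{\ell\ge 3}F(\ell)/F(2)$ is about $1.80>\kappa_1$ at $n=3$, because of the phantom leading $1$. So that bracket is not bounded by $\kappa_1$ uniformly in $n$. Instead, deduce the odd case from the second inequality: $\lvert D(n)\rvert\le\kappa_2F(3)\le\kappa_2F(2)<\kappa_1F(2)$, using your monotonicity of $F$. Likewise, for odd $n$ with $3\nmid n$, use the ratio-sum bound $\sum_{\ell\ge 5}F(\ell)/F(3)\le\kappa_2$ (without the leading $1$) rather than the single-term comparison $F(5)<F(3)$.
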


\begin{proof}
See \cite[Lemma~3.9]{Ohnishi2606}.
\end{proof}
\HL


\subsection{\texorpdfstring{Upper Bound for $\vt{C}$}{Upper Bound for |C|}}

In this section, we derive an upper bound for
\begin{align}
\vt{C(b,q)} &= \# \{ y \in T(b,q) \mid \AD \ncong \{1\} \}.
\end{align}
Since $C(b,q) = T(b,q) \cap D(n)$, \propref{prop:D} gives
\begin{align}
C(b,q) &= T(b, q) \cap \Biggl(\bigcup_{\ell\colon \text{prime},\, \ell\, \mid\, n} D_{\f{n}{\ell}}\Biggr)
= \bigcup_{\ell\colon \text{prime},\, \ell\, \mid\, n} (T(b,q) \cap D_{\f{n}{\ell}}).
\end{align}
Let $C_{n/\ell}(b, q) \ceq T(b,q) \cap D_{n/\ell}$. Then
\begin{align}
C(b,q) &= \bigcup_{\ell\colon \text{prime},\, \ell\, \mid\, n} C_{\f{n}{\ell}}(b, q), \\
\label{eq:Cbqsum}
\vt{C(b,q)} &\le \sum_{\ell\colon \text{prime},\, \ell\, \mid\, n} \vt{C_{\f{n}{\ell}}(b, q)}.
\end{align}

The following proposition gives a formula for $\vt{C_{n/\ell}(b,q)}$.

\begin{prop}
\label{prop:Cnl}
Let $n \in \Zp$ with $n \ge 2$, let $\ell$ be a prime divisor of $n$, and let $x \in S_{n}$ be an $n$-cycle.
Let $T(b,q)$ be as defined in \eqref{eq:tncd}, with $n=bq$, and let $D_{k}$
be as defined in \eqref{eq:defDk} for each $1\le k\le n-1$.
Define
\begin{align}
C_{\f{n}{\ell}}(b, q) = T(b,q) \cap D_{\f{n}{\ell}} = \{ y \in S_{n} \mid y \text{ has cycle type } (b^{q}),\ x^{\f{n}{\ell}}y=yx^{\f{n}{\ell}} \}.
\end{align}
Then
\begin{align}
\vt{C_{\f{n}{\ell}}(b, q)} &=
\begin{dcases}
\left(\f{n}{\ell}\right)!\sum_{s=0}^{\fl{q/\ell}}\f{\ell^{(b-1)s}(\ell^{\f{b}{\ell}-1}(\ell-1))^{q-\ell s}}
{b^{s}s!\left(\f{b}{\ell}\right)^{q - \ell s}(q - \ell s)!} & (\ell \mid b), \\
\left(\f{n}{\ell}\right)!\f{\ell^{\f{(b-1)q}{\ell}}}{b^{\f{q}{\ell}}\left(\f{q}{\ell}\right)!} & (\ell \nmid b).
\end{dcases}
\end{align}
\end{prop}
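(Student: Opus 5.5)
We will identify $D_{n/\ell}$ explicitly as a wreath product $C_{\ell}\wr S_{n/\ell}$ and count its elements of cycle type $(b^{q})$ by the cycle structure of the induced permutation of blocks together with the associated ``twists''. Let $m=n/\ell$ and $w=x^{m}$. Since $x$ is an $n$-cycle, $w$ is a product of $m$ disjoint $\ell$-cycles. Relabel $E$ as $\Z/\ell\Z\times\{1,\dots,m\}$ so that $w(i,j)=(i+1,j)$.

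\textbf{Step 1 (structure of the centralizer).} A permutation $y$ commutes with $w$ if and only if it permutes the $w$-orbits (the blocks $\Z/\ell\Z\times\{j\}$) and acts on each block compatibly with the rotation. Equivalently, there are a unique $\pi\in S_{m}$ and unique shifts $c_{1},\dots,c_{m}\in\Z/\ell\Z$ with
\begin{align}
y(i,j)=(i+c_{j},\pi(j)).
\end{align}
Conversely, every such pair $(\pi,(c_{j}))$ defines an element of $D_{m}$. This recovers $\vt{D_{m}}=\ell^{m}m!$, consistent with \propref{prop:Dk}.

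\textbf{Step 2 (cycle type of $y$).} Fix a cycle $(j_{1}\,\dots\,j_{r})$ of $\pi$ and put $t=c_{j_{1}}+\dots+c_{j_{r}}\in\Z/\ell\Z$. Then $y^{r}(i,j_{1})=(i+t,j_{1})$. Because $\ell$ is prime, there are two cases on the $r\ell$ points lying over this cycle.
\begin{itemize}
\item If $t=0$, then $y$ has $\ell$ cycles of length $r$ there.
\item If $t\neq0$, then $y$ has a single cycle of length $r\ell$ there.
\end{itemize}
Among the $\ell^{r}$ choices of $(c_{j_{1}},\dots,c_{j_{r}})$, exactly $\ell^{r-1}$ give $t=0$ and $\ell^{r-1}(\ell-1)$ give $t\ne0$.

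\textbf{Step 3 (counting).} For $y$ to have type $(b^{q})$, every $\pi$-cycle must be of one of two kinds. Either it is untwisted with $r=b$, contributing $\ell$ cycles of length $b$ and weight $\ell^{b-1}$. Or it is twisted with $r=b/\ell$, which requires $\ell\mid b$, contributing one $b$-cycle and weight $\ell^{b/\ell-1}(\ell-1)$. Let $s$ be the number of untwisted $\pi$-cycles; then there are $q-\ell s$ twisted ones. Consistency holds, since $sb+(q-\ell s)b/\ell=n/\ell=m$. The number of $\pi\in S_{m}$ with $s$ cycles of length $b$ and $q-\ell s$ cycles of length $b/\ell$ is
\begin{align}
\f{m!}{b^{s}s!\,(b/\ell)^{q-\ell s}(q-\ell s)!}.
\end{align}
We choose which cycles are twisted only through this cycle-type data, so each $\pi$ is counted once with the product weight $\ell^{(b-1)s}\bigl(\ell^{b/\ell-1}(\ell-1)\bigr)^{q-\ell s}$. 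Summing over $0\le s\le\fl{q/\ell}$ gives the case $\ell\mid b$.

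If $\ell\nmid b$, twisted cycles are impossible, so we need $q=\ell s$. This forces $s=q/\ell$, which is an integer because $\ell\mid bq$ and $\ell\nmid b$. The single surviving term is $m!\,\ell^{(b-1)q/\ell}/\bigl(b^{q/\ell}(q/\ell)!\bigr)$.

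The main point requiring care is Step 2: it is the primality of $\ell$ that limits the twisted case to the dichotomy $t=0$ versus $t$ a generator. The rest is the standard count of permutations with a prescribed cycle type.
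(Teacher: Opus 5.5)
Your proposal is correct and follows essentially the same route as the paper: you describe $D_{n/\ell}$ as the wreath product $C_{\ell}\wr S_{n/\ell}$, split each cycle of the induced block permutation according to whether its total rotation is zero or a generator (using primality of $\ell$), and count with weights $\ell^{b-1}$ and $\ell^{b/\ell-1}(\ell-1)$, exactly as the paper does. The only minor difference is that you obtain the normal form $y(i,j)=(i+c_{j},\pi(j))$ directly from the relation $yw=wy$, which is slightly more self-contained; the paper instead cites the wreath-product structure of the centralizer.
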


\begin{proof}
Let $d = n/\ell$. The cycle type of $x^{d}$ is $(\ell^{d})$.
By \cite[Theorem~74.1]{Lipscomb}, $D_{d}$ is isomorphic to the wreath product
$C_{\ell}\wr S_{d}$. Hence $D_{d}$ is generated by the following two types
of permutations:
\begin{itemize}
\item permutations that interchange two of the $d$ cycles of $x^{d}$
while preserving the cyclic order within each cycle, and
\item permutations that cyclically rotate the $\ell$ elements within a single cycle of $x^{d}$.
\end{itemize}

\begin{exa}
Let $n=12$, $b=4$, $\ell=3$, and $x=(1\ 2\ \ldots\ 12)$. Then
\begin{align}
x^{d}=x^{4}=(1\ 5\ 9)(2\ 6\ 10)(3\ 7\ 11)(4\ 8\ 12).
\end{align}
\needspace{\baselineskip} 
The subgroup $D_4\le S_{12}$ consisting of the elements that commute with $x^4$ is generated by
\begin{itemize}
\item permutations that interchange adjacent cycles while preserving the cyclic order within each cycle:
\begin{align}
(1\ 2)(5\ 6)(9\ 10),\quad (2\ 3)(6\ 7)(10\ 11),\quad (3\ 4)(7\ 8)(11\ 12);
\end{align}
\item permutations that cyclically rotate the three elements within a single cycle:
\begin{align}
(1\ 5\ 9),\quad (2\ 6\ 10),\quad (3\ 7\ 11),\quad (4\ 8\ 12).
\end{align}
\end{itemize}
\end{exa}

Let $(i,j)$, where $i\in\Z/d\Z$ and $j\in\Z/\ell\Z$, denote the element in the cycle of $x^{d}$
 indexed by $i$ at the position indexed by $j$.
Then
\begin{align}
x^{d} &= (1\ 1{+}d\ \ldots\ 1{+}(\ell{-}1)d)(2\ 2{+}d\ \ldots\ 2{+}(\ell{-}1)d)\cdots(d\ 2d\ \ldots\ \ell d), \\
(i, j) &= i + dj + 1,
\end{align}
where we identify $\Z/d\Z$ and $\Z/\ell\Z$ with
$\{0,\ldots,d-1\}$ and $\{0,\ldots,\ell-1\}$, respectively.

Since $D_d$ is generated by the above two types of elements, every $y \in C_{d}(b,q) = D_{d} \cap T(b, q)$ permutes
the cycles of $x^d$ as whole cycles, and the amount of cyclic rotation is constant within each cycle. Let $\pi \in S_{d}$
denote the permutation of the cycles induced by $y$, and let $\alpha_i \in \Z/\ell\Z$ denote the rotation amount
in the cycle indexed by $i$. Then
\begin{align}
y\cdot(i, j) &= (\pi\cdot i,\ j+\A_i).
\end{align}

Suppose that the cycle decomposition of $\pi$ contains a cycle of length $r$
($1\le r\le d$). Let
\begin{align}
(i_1\ i_2\ \ldots\ i_r)
\end{align}
be such a cycle, and let $B_{i_j}$ denote the cycle of $x^{d}$
 indexed by $i_{j}$.
Then $y$ maps these cycles as
\begin{align}
B_{i_1}\to B_{i_2}\to\cdots\to B_{i_r}\to B_{i_1}.
\end{align}
For an element $u = (i_1,j)\in B_{i_1}$, we have
\begin{align}
y^r\cdot u
=(i_1,\ j+\A_{i_1}+\cdots+\A_{i_r}).
\end{align}

Let $\A=\A_{i_{1}}+\cdots+\A_{i_{r}} \in \Z/\ell\Z$.

If $\A=0$, then
\begin{align}
y^{r}\cdot u=u.
\end{align}
Moreover, since $r$ is the length of the cycle
$(i_{1}\ i_{2}\ \ldots\ i_{r})$ of $\pi$, no positive power $y^{s}$
with $s<r$ maps $u$ back to itself.
Thus, $u$ lies in a cycle of length $r$ in $y$.
Since $y$ has cycle type $(b^{q})$, we obtain $r=b$.

If $\A\ne0$, then $y^r$ maps $u$ to another element of the cycle $B_{i_1}$.
Since $\ell$ is prime, the additive order of $\A$ in
$\Z/\ell\Z$ is $\ell$.
Thus, the least positive integer $s$ such that $(y^{r})^{s}\cdot u=u$ is $\ell$.
Hence the cycle of $y$ containing $u$ has length $r\ell$.
Since $y$ has cycle type $(b^{q})$, we obtain
\begin{align}
r\ell=b,
\end{align}
and hence $r=b/\ell$. In particular, $\ell\mid b$.
Thus, every cycle of $\pi$ has length either $b$ or $b/\ell$, where the latter occurs only if $\ell \mid b$.
For a cycle of length $b$, we have
$\A_{i_1}+\cdots+\A_{i_b}=0$,
whereas for a cycle of length $b/\ell$, we have
$\A_{i_1}+\cdots+\A_{i_{b/\ell}}\ne0$.
\HL

\noindent
(i) The case $\ell \mid b$

Suppose that $\pi$ has $s$ cycles of length $b$ and $t$ cycles of length $b/\ell$.
Since these cycles partition the $d$ cycles of $x^{d}$,
\begin{align}
bs + \f{b}{\ell}t = d.
\end{align}
Since $d = n/\ell$,
\begin{align}
t = \f{d\ell}{b}-\ell s = \f{n}{b} - \ell s = q - \ell s.
\end{align}
The number of such permutations $\pi$ is
\begin{align}
\label{eq:numofpi}
\f{d!}{b^{s}s!\left(\f{b}{\ell}\right)^{t}t!} = \f{\left(\f{n}{\ell}\right)!}{b^{s}s!\left(\f{b}{\ell}\right)^{q-\ell s}(q-\ell s)!}.
\end{align}
Moreover,
\begin{align}
0 \le s \le \fl{\f{d}{b}} = \fl{\f{n}{b\ell}} = \fl{\f{q}{\ell}}.
\end{align}

For each cycle of length $b$ in $\pi$, let
$\B_1,\ldots,\B_b$ be the rotation amounts of the corresponding cycles of $x^d$.
Then
\begin{align}
\B_1+\cdots+\B_b\equiv0\pmod{\ell}.
\end{align}
The number of choices of $(\B_{1},\ldots,\B_{b})$ satisfying this condition is
\begin{align}
\label{eq:numofb}
c_{1} \ceq \ell^{b-1},
\end{align}
since $\B_1,\ldots,\B_{b-1}$ can be chosen freely from
$\{0,1,\ldots,\ell-1\}$, and $\B_b$ is uniquely determined by them.

For each cycle of length $b/\ell$ in $\pi$, let
$\B_{1},\ldots,\B_{b/\ell}$ be the rotation amounts of the corresponding cycles of $x^d$.
Then
\begin{align}
\B_{1}+\cdots+\B_{\f{b}{\ell}} \not\equiv 0 \pmod{\ell}.
\end{align}
The number of choices of $(\B_{1},\ldots,\B_{b/\ell})$ satisfying this condition is
\begin{align}
\label{eq:numofbl}
c_{2} \ceq \ell^{\f{b}{\ell}-1}(\ell-1),
\end{align}
since $\B_1,\ldots,\B_{b/\ell-1}$ can be chosen freely from
$\{0,1,\ldots,\ell-1\}$, and $\B_{b/\ell}$ has $\ell-1$ possible choices
satisfying the above condition.

For each $s$, the number of permutations $\pi$ is given by \eqref{eq:numofpi}.
Each such $\pi$ has $s$ cycles of length $b$, for each of which there are $c_{1}$ choices of
$\B_1,\ldots,\B_b$, and $t=q-\ell s$ cycles of length $b/\ell$, each having
$c_{2}$ choices of $\B_1,\ldots,\B_{b/\ell}$.
Multiplying these numbers and summing over all $s$, we obtain
\begin{align}
\label{eq:Abl1}
\vt{C_{\f{n}{\ell}}(b, q)} &= \left(\f{n}{\ell}\right)!\sum_{s=0}^{\fl{q/\ell}}\f{\ell^{(b-1)s}(\ell^{\f{b}{\ell}-1}(\ell-1))^{q-\ell s}}
{b^{s}s!\left(\f{b}{\ell}\right)^{q - \ell s}(q - \ell s)!}.
\end{align}
\HL

\noindent
(ii) The case $\ell \nmid b$

In this case, since $\ell\mid n=bq$, $\ell\nmid b$, and $\ell$ is prime,
we have $\ell\mid q$.
Since $\pi$ has no cycles of length $b/\ell$, it consists of exactly
$q/\ell$ cycles of length $b$.
By the same counting argument as in case~(i), we obtain
\begin{align}
\label{eq:Anbl2}
\vt{C_{\f{n}{\ell}}(b, q)}
&= \left(\f{n}{\ell}\right)!
\f{\ell^{\f{(b-1)q}{\ell}}}
{b^{\f{q}{\ell}}\left(\f{q}{\ell}\right)!}.\qedhere
\end{align}
\end{proof}
\HL
\needspace{\baselineskip} 
\begin{lemma}
\label{lem:C2q3q4q}
For $q \in \Zp$,
\begin{align}
\vt{C_{\f{n}{2}}(2, q)} &\le q!\left(\f{2e}{q}\right)^{\f{q}{2}}\exp\left(\sqrt{\f{q}{2}}\right), \\
\vt{C_{\f{n}{3}}(3, q)} &\le q!\left(\f{9e}{q}\right)^{\f{q}{3}}\exp\left(2\left(\f{q}{9}\right)^{\f{1}{3}}\right), \\
\vt{C_{\f{n}{2}}(4, q)} &\le (2q)!\left(\f{4e}{q}\right)^{\f{q}{2}}\exp\left(\f{\sqrt{q}}{2}\right).
\end{align}
\end{lemma}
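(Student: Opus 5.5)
The plan is to evaluate the exact formula of \propref{prop:Cnl} in each of the three cases and bound the resulting finite sum by a saddle-point (Chernoff-type) estimate on its exponential generating function. In all three cases $\ell \mid b$, so the first case of \propref{prop:Cnl} applies, and we substitute the parameters directly.

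\textbf{Simplifying the sums.}
\begin{itemize}
\item For $(b,\ell)=(2,2)$ we have $n/\ell=q$, and the sum simplifies to
\begin{align}
\vt{C_{\f{n}{2}}(2,q)} = q!\sum_{s=0}^{\fl{q/2}}\f{1}{s!\,(q-2s)!}.
\end{align}
\item For $(b,\ell)=(3,3)$ we have $n/\ell=q$, and
\begin{align}
\vt{C_{\f{n}{3}}(3,q)} = q!\sum_{s=0}^{\fl{q/3}}\f{3^{s}\,2^{q-3s}}{s!\,(q-3s)!}.
\end{align}
\item For $(b,\ell)=(4,2)$ we have $n/\ell=2q$, and
\begin{align}
\vt{C_{\f{n}{2}}(4,q)} = (2q)!\sum_{s=0}^{\fl{q/2}}\f{2^{s}}{s!\,(q-2s)!}.
\end{align}
\end{itemize}
These simplifications are routine exponent bookkeeping in \propref{prop:Cnl}.

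\textbf{Recognizing generating functions.} Each sum is exactly the coefficient of $t^{q}$ in an exponential:
\begin{itemize}
\item $\sum_{s} 1/(s!(q-2s)!) = [t^{q}]\,e^{t+t^{2}}$;
\item $\sum_s 3^{s}2^{q-3s}/(s!(q-3s)!) = [t^{q}]\,e^{2t+3t^{3}}$;
\item $\sum_s 2^{s}/(s!(q-2s)!) = [t^{q}]\,e^{t+2t^{2}}$.
\end{itemize}
Each generating function has nonnegative coefficients. Hence for every $t>0$ the coefficient of $t^{q}$ is at most $t^{-q}$ times the value of the function at $t$. This gives a free parameter $t$ to optimize.

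\textbf{Choosing $t$.} We pick $t$ so that the dominant term of the exponent equals $q/\text{(degree)}$, which is the approximate saddle point.
\begin{itemize}
\item For $e^{t+t^{2}}$, take $t=\sqrt{q/2}$. Then $t^{2}=q/2$ and $t^{-q}=(2/q)^{q/2}$, giving $(2e/q)^{q/2}\exp(\sqrt{q/2})$.
\item For $e^{2t+3t^{3}}$, take $t=(q/9)^{1/3}$. Then $3t^{3}=q/3$ and $t^{-q}=(9/q)^{q/3}$, giving $(9e/q)^{q/3}\exp(2(q/9)^{1/3})$.
\item For $e^{t+2t^{2}}$, take $t=\sqrt{q}/2$. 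Then $2t^{2}=q/2$ and $t^{-q}=(4/q)^{q/2}$, giving $(4e/q)^{q/2}\exp(\sqrt q/2)$.
\end{itemize}
Multiplying each bound by the prefactor $q!$ or $(2q)!$ yields exactly the three stated inequalities. These choices of $t$ need no optimality; any $t>0$ gives a valid bound.

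\textbf{Main obstacle.} The only real work is the exponent bookkeeping in specializing \propref{prop:Cnl}. For example, with $b=4$, $\ell=2$ one computes $\ell^{b/\ell-1}(\ell-1)=2$ and $(b/\ell)^{q-2s}=2^{q-2s}$, so these factors cancel, while $2^{3s}/4^{s}=2^{s}$. Once the three sums are in the clean forms above, identifying the generating functions and applying the coefficient bound is immediate.
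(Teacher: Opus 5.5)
Your proposal is correct and matches the paper's proof step for step. The paper likewise specializes \propref{prop:Cnl} to the same three sums and identifies them as coefficients of $x^{q}$ in $e^{\alpha x+\beta x^{m}}$ with $(\alpha,\beta,m)=(1,1,2),(2,3,3),(1,2,2)$; it then uses nonnegativity of the coefficients to bound each by $e^{\alpha R+\beta R^{m}}/R^{q}$ and substitutes the same approximate saddle points $R_{0}=\sqrt{q/2}$, $(q/9)^{1/3}$, $\sqrt{q}/2$.
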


\begin{proof}
For $\A, \B > 0$ and $m \in \Zp$,
\begin{align}
e^{\A x+\B x^{m}} = \sum_{j=0}^{\infty}\f{(\A x)^{j}}{j!}\sum_{k=0}^{\infty}\f{(\B x^{m})^{k}}{k!}
= \sum_{j=0}^{\infty}\sum_{k=0}^{\infty}\f{\A^{j}\B^{k}x^{j+mk}}{j!k!}.
\end{align}
Since the terms contributing to the coefficient of $x^{q}$ satisfy $j+mk=q$, we have
\begin{align}
\label{eq:xq}
[x^{q}]e^{\A x+\B x^{m}} = \sum_{\substack{j,k\ge 0\\j+mk=q}}\f{\A^{j}\B^{k}}{j!k!}
= \sum_{s=0}^{\fl{q/m}}\f{\A^{q-ms}\B^{s}}{s!(q-ms)!},
\end{align}
where $[x^{q}]F(x)$ denotes the coefficient of $x^{q}$ in $F(x)$.

Moreover, since all coefficients are nonnegative, Cauchy's estimate gives
\begin{align}
\label{eq:xqR}
[x^{q}]e^{\A x+\B x^{m}} \le \f{e^{\A R+\B R^{m}}}{R^{q}}\q\text{ for all }R > 0.
\end{align}
Hence
\begin{align}
\label{eq:ABR}
\sum_{s=0}^{\fl{q/m}}\f{\A^{q-ms}\B^{s}}{s!(q-ms)!} \le \f{e^{\A R+\B R^{m}}}{R^{q}}\q\text{ for all }R > 0.
\end{align}
\HL

\noindent
(i) $b = 2$, $\ell = 2$

Setting $\A=\B=1$ and $m=2$ in \eqref{eq:ABR}, we obtain
\begin{align}
\label{eq:ABR2}
\sum_{s=0}^{\fl{q/2}}\f{1}{s!(q-2s)!} \le \f{e^{R+R^{2}}}{R^{q}}\q\text{ for all }R > 0.
\end{align}

On the other hand, since $b=\ell=2$, \propref{prop:Cnl} gives
\begin{align}
\label{eq:Cn2}
\vt{C_{\f{n}{2}}(2, q)} &= \left(\f{n}{2}\right)!\sum_{s=0}^{\fl{q/2}}\f{2^{(2-1)s}(2^{\f{2}{2}-1}(2-1))^{q-2 s}}
{2^{s}s!\left(\f{2}{2}\right)^{q - 2 s}(q - 2 s)!}
= q!\sum_{s=0}^{\fl{q/2}}\f{1}{s!(q - 2 s)!}.
\end{align}
By \eqref{eq:ABR2} and \eqref{eq:Cn2},
\begin{align}
\vt{C_{\f{n}{2}}(2, q)} &\le q!\f{e^{R+R^{2}}}{R^{q}}\q\text{ for all }R > 0.
\end{align}
Since
\begin{align}
\dv{R}\log\left(\f{e^{R+R^{2}}}{R^{q}}\right) = \dv{R}(R+R^{2}-q\log R) = \f{1}{R}(2R^{2}+R-q),
\end{align}
the function $e^{R+R^{2}}/R^{q}$ attains its minimum at the positive root of $2R^{2}+R-q = 0$, namely
\begin{align}
R = \f{-1 + \sqrt{1+8q}}{4}.
\end{align}
For simplicity, we take the nearby value
\begin{align}
R = R_{0} \ceq \f{\sqrt{8q}}{4} = \sqrt{\f{q}{2}}\q(> 0).
\end{align}
Then
\begin{align}
\vt{C_{\f{n}{2}}(2, q)} &\le q!\f{e^{R_{0}+R_{0}^{2}}}{R_{0}^{q}}
= q!\f{\exp(\sqrt{\f{q}{2}}+\f{q}{2})}{\left(\sqrt{\f{q}{2}}\right)^{q}}
= q!\left(\f{2e}{q}\right)^{\f{q}{2}}\exp\left(\sqrt{\f{q}{2}}\right).
\end{align}
\HL

\noindent
(ii) $b = 3$, $\ell = 3$

Setting $\A = 2$, $\B = 3$, and $m = 3$ in \eqref{eq:ABR}, we obtain
\begin{align}
\label{eq:ABR3}
\sum_{s=0}^{\fl{q/3}}\f{3^{s}2^{q-3s}}{s!(q-3s)!} \le \f{e^{2 R+3 R^{3}}}{R^{q}}\q\text{ for all }R > 0.
\end{align}

On the other hand, since $b = \ell = 3$, \propref{prop:Cnl} gives
\begin{align}
\label{eq:Cn3}
\vt{C_{\f{n}{3}}(3, q)} &= \left(\f{n}{3}\right)!\sum_{s=0}^{\fl{q/3}}\f{3^{(3-1)s}(3^{\f{3}{3}-1}(3-1))^{q-3 s}}
{3^{s}s!\left(\f{3}{3}\right)^{q - 3 s}(q - 3 s)!}
= q!\sum_{s=0}^{\fl{q/3}}\f{3^{s}2^{q-3 s}}{s!(q - 3 s)!}.
\end{align}
By \eqref{eq:ABR3} and \eqref{eq:Cn3},
\begin{align}
\vt{C_{\f{n}{3}}(3, q)} &\le q!\f{e^{2R+3R^{3}}}{R^{q}}\q\text{ for all }R > 0.
\end{align}
Since
\begin{align}
\dv{R}\log\left(\f{e^{2R+3R^{3}}}{R^{q}}\right) = \dv{R}(2R+3R^{3}-q\log R) = \f{1}{R}(9R^{3}+2R-q),
\end{align}
the function $e^{2R+3R^{3}}/R^{q}$ attains its minimum at the positive root of $9R^{3}+2R-q = 0$.
For simplicity, we take the nearby value
\begin{align}
R = R_{0} \ceq \left(\f{q}{9}\right)^{\f{1}{3}}\q(> 0),
\end{align}
which is the positive root of $9R^{3}-q = 0$. Then
\begin{align}
\vt{C_{\f{n}{3}}(3, q)} &\le q!\f{e^{2R_{0}+3R_{0}^{3}}}{R_{0}^{q}} 
= q!\f{\exp(2\left(\f{q}{9}\right)^{\f{1}{3}}+\f{q}{3})}{\left(\f{q}{9}\right)^{\f{q}{3}}}
= q!\left(\f{9e}{q}\right)^{\f{q}{3}}\exp\left(2\left(\f{q}{9}\right)^{\f{1}{3}}\right).
\end{align}
\HL

\noindent
(iii) $b = 4$, $\ell = 2$

Setting $\A = 1$, $\B = 2$, and $m = 2$ in \eqref{eq:ABR}, we obtain
\begin{align}
\label{eq:ABR4}
\sum_{s=0}^{\fl{q/2}}\f{2^{s}}{s!(q-2s)!} \le \f{e^{R+2R^{2}}}{R^{q}}\q\text{ for all }R > 0.
\end{align}

On the other hand, since $b = 4$ and $\ell = 2$, \propref{prop:Cnl} gives
\begin{align}
\label{eq:Cn4}
\vt{C_{\f{n}{2}}(4, q)} &= \left(\f{n}{2}\right)!\sum_{s=0}^{\fl{q/2}}\f{2^{(4-1)s}(2^{\f{4}{2}-1}(2-1))^{q-2 s}}
{4^{s}s!\left(\f{4}{2}\right)^{q - 2 s}(q - 2 s)!}
= (2q)!\sum_{s=0}^{\fl{q/2}}\f{2^{s}}
{s!(q - 2 s)!}.
\end{align}
By \eqref{eq:ABR4} and \eqref{eq:Cn4},
\begin{align}
\vt{C_{\f{n}{2}}(4, q)} &\le (2q)!\f{e^{R+2R^{2}}}{R^{q}}\q\text{ for all }R > 0.
\end{align}
Since
\begin{align}
\dv{R}\log\left(\f{e^{R+2R^{2}}}{R^{q}}\right) = \dv{R}(R+2R^{2}-q\log R) = \f{1}{R}(4R^{2}+R-q),
\end{align}
the function $e^{R+2R^{2}}/R^{q}$ attains its minimum at the positive root of $4R^{2}+R-q = 0$, namely
\begin{align}
R = \f{-1 + \sqrt{1+16q}}{8}.
\end{align}
For simplicity, we take the nearby value
\begin{align}
R = R_{0} \ceq \f{\sqrt{16q}}{8} = \f{\sqrt{q}}{2}\q(> 0).
\end{align}
Then
\begin{align}
\vt{C_{\f{n}{2}}(4, q)} &\le (2q)!\f{e^{R_{0}+2R_{0}^{2}}}{R_{0}^{q}}
= (2q)!\f{\exp(\f{\sqrt{q}}{2}+\f{q}{2})}{\left(\f{\sqrt{q}}{2}\right)^{q}}
= (2q)!\left(\f{4e}{q}\right)^{\f{q}{2}}\exp\left(\f{\sqrt{q}}{2}\right).\qedhere
\end{align}
\end{proof}
\HL


\subsection{\texorpdfstring{Exact Computation of $\vt{N}$, $\vt{D}$, and $\vt{C}$}{Exact Computation of |N|, |D|, and |C|}}

In \secref{sec:class4}, we show the existence of a dessin with trivial automorphism group mainly by comparing
lower bounds for $\vt{N}$ with upper bounds for $\vt{D}$ or $\vt{C}$. However, for the remaining exceptional cases,
we compute the exact values of $\vt{N}$, $\vt{D}$, and $\vt{C}$.

The exact value of $\vt{N}$ can be computed from \eqref{eq:Nbqa}.

For $\vt{D}$, let $\mcP(n)$ denote the set of prime divisors of $n$.
By \propref{prop:D},
\begin{align}
D(n)=\bigcup_{\ell\in\mcP(n)}D_{\f{n}{\ell}}.
\end{align}
For each nonempty subset $S\subset\mcP(n)$, define
\begin{align}
\lambda_{S} \ceq \prod_{\ell\in S}\ell,
\end{align}
the product of all elements of $S$.
If $\ell \in S$, then $n/\la_{S} \mid n/\ell$. Hence by \propref{prop:Dk}\ref{itm:xcom2},
\begin{align}
D_{\f{n}{\la_{S}}} \subset D_{\f{n}{\ell}}\q \text{for all }\ell \in S.
\end{align}
Therefore,
\begin{align}
D_{\f{n}{\la_{S}}} \subset \bigcap_{\ell\in S}D_{\f{n}{\ell}}.
\end{align}

Conversely, since the elements of $S$ are distinct primes,
\begin{align}
\gcd\left\{\f{n}{\ell}\relmiddle{|} \ell\in S\right\} = \f{n}{\la_{S}}.
\end{align}
Hence, by B\'ezout's identity, there exist integers $a_{\ell}$ ($\ell\in S$) such that
\begin{align}
\f{n}{\la_{S}} = \sum_{\ell\in S}a_{\ell}\f{n}{\ell}.
\end{align}
If
\begin{align}
c \in \bigcap_{\ell\in S}D_{\f{n}{\ell}},
\end{align}
then $c$ commutes with $x^{n/\ell}$ for every $\ell\in S$. Therefore, $c$ also commutes with
\begin{align}
x^{\f{n}{\la_{S}}}=\prod_{\ell\in S}\left(x^{\f{n}{\ell}}\right)^{a_{\ell}}.
\end{align}
Hence $c \in D_{n/\la_{S}}$. Thus,
\begin{align}
\bigcap_{\ell\in S}D_{\f{n}{\ell}}
\subset D_{\f{n}{\la_{S}}}.
\end{align}
Consequently,
\begin{align}
\bigcap_{\ell\in S}D_{\f{n}{\ell}} =D_{\f{n}{\la_{S}}}.
\end{align}
Therefore, by the inclusion-exclusion principle and \propref{prop:Dk}, the formula for $\vt{D}$ is
\begin{align}
\vt{D(n)} &= \vt{\bigcup_{\ell\in\mcP(n)}D_{\f{n}{\ell}}}
= \sum_{\substack{S\subset\mcP(n)\\ S\ne\emptyset}}(-1)^{\vt{S}+1}\vt{\bigcap_{\ell\in S}D_{\f{n}{\ell}}} \\
&= \sum_{\substack{S\subset\mcP(n)\\ S\ne\emptyset}}(-1)^{\vt{S}+1}\vt{D_{\f{n}{\lambda_{S}}}} \\
\label{eq:calcD}
&= \sum_{\substack{S\subset\mcP(n)\\ S\ne\emptyset}}(-1)^{\vt{S}+1}\lambda_{S}^{\f{n}{\lambda_{S}}}\left(\f{n}{\lambda_{S}}\right)!.
\end{align}

For $\vt{C}$, we use the following lemma.

\begin{lemma}
\label{lem:TDnL}
Let $m\mid n$ with $m \ge 2$, and set $d = n/m$ and $\delta = \gcd(b, m)$. Then
\begin{align}
\vt{T(b,q)\cap D_{\f{n}{m}}}
= d!\sum_{(t_{k})_{k\mid \delta}}
\prod_{k\mid \delta} \f{1}{t_{k}!}
\left(\f{\varphi(k)m^{\f{b}{k}-1}}{b/k}\right)^{t_{k}},
\end{align}
where the sum is taken over all tuples $(t_{k})$ of nonnegative integers indexed by $k\mid \delta$ satisfying
\begin{align}
\sum_{k\mid \delta}\f{b}{k}t_{k}=d,
\end{align}
and $\varphi$ denotes Euler's totient function, defined by
\begin{align}
\varphi(k) \ceq \#\{ i \in \Zp \mid i \le k,\ \gcd(i, k) = 1 \}.
\end{align}
\end{lemma}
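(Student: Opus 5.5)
The plan is to generalize the proof of \propref{prop:Cnl} from a prime $\ell$ to an arbitrary divisor $m\mid n$ with $m\ge2$.

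\textbf{Step 1: parametrize $D_d$.} Put $d=n/m$. Then $x^{d}$ has cycle type $(m^{d})$, and its centralizer $D_d$ is isomorphic to $C_m\wr S_d$. Label the points as $(i,j)$ with $i\in\Z/d\Z$ and $j\in\Z/m\Z$, as in the proof of \propref{prop:Cnl}. Every $y\in D_d$ can then be written uniquely as
\begin{align}
y\cdot(i,j)=(\pi\cdot i,\ j+\A_i),
\end{align}
where $\pi\in S_d$ and $(\A_i)\in(\Z/m\Z)^d$. This gives a bijection between $D_d$ and pairs $(\pi,(\A_i))$.

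\textbf{Step 2: cycle structure of $y$.} Take a cycle $(i_1\ \ldots\ i_r)$ of $\pi$ and let $\A=\A_{i_1}+\cdots+\A_{i_r}\in\Z/m\Z$. Then $y^r$ acts on the block $B_{i_1}$ as translation by $\A$. If $\A$ has additive order $k$ (so $k\mid m$), the $rm$ points in these $r$ blocks split into $m/k$ cycles of $y$, each of length $rk$. For $y$ to have cycle type $(b^q)$ we need $rk=b$, that is, $k\mid b$ and $k\mid m$, so $k\mid\delta$ and $r=b/k$. Conversely, any $\pi$ whose cycles have lengths $b/k$, with rotation sums of exact order $k$, gives a $y$ of type $(b^q)$. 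Here $n=bq$ is automatic, since every point then lies in a $b$-cycle.

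\textbf{Step 3: counting.} Let $t_k$ be the number of cycles of $\pi$ of length $b/k$. The constraint is $\sum_k (b/k)t_k=d$. The number of $\pi\in S_d$ with this cycle type is
\begin{align}
\f{d!}{\prod_{k\mid\delta}(b/k)^{t_k}t_k!}.
\end{align}
For each cycle of length $r=b/k$, we count tuples $(\A_{i_1},\ldots,\A_{i_r})$ whose sum has exact order $k$ in $\Z/m\Z$:
\begin{itemize}
\item the first $r-1$ entries are free, giving $m^{r-1}=m^{b/k-1}$ choices;
\item the last entry is then forced to make the sum an element of order $k$, and the cyclic group $\Z/m\Z$ contains exactly $\varphi(k)$ such elements.
\end{itemize}
So each such cycle contributes $\varphi(k)m^{b/k-1}$ choices. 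Multiplying over cycles and summing over all admissible $(t_k)$ yields the stated formula.

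\textbf{Main obstacle.} The only delicate point is justifying the exact-order case analysis for composite $m$, where the prime case used that every nonzero element has order $\ell$. Concretely, one must check that when $\A$ has order $k$, the orbit of $u\in B_{i_1}$ under $y$ has length exactly $rk$. This holds because $y^s u$ lies in block $B_{i_1}$ only when $r\mid s$, and $y^{rs'}u=(i_1,\,j+s'\A)$ returns to $u$ exactly when $k\mid s'$. As a sanity check, setting $m=\ell$ prime recovers \propref{prop:Cnl}:
\begin{itemize}
\item $k=1$ gives the cycles of length $b$, with $\ell^{b-1}$ choices;
\item $k=\ell$ gives the cycles of length $b/\ell$, with $(\ell-1)\ell^{b/\ell-1}$ choices.
\end{itemize}
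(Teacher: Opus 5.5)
Your proposal is correct and follows essentially the same route as the paper: identify $D_{n/m}$ with $C_m\wr S_d$, note that a $\pi$-cycle of length $r$ whose total rotation has order $k$ yields $m/k$ cycles of length $rk$ (forcing $k\mid\delta$ and $r=b/k$), and then count the elements of $S_d$ with $t_k$ cycles of length $b/k$, times $\varphi(k)m^{b/k-1}$ rotation choices per cycle. Your explicit check that the orbit length is exactly $rk$ for composite $m$ makes precise a step the paper only asserts; one wording fix is that the last rotation is not ``forced'' but is determined once one of the $\varphi(k)$ target sums of order $k$ is chosen, which is exactly how your count $\varphi(k)m^{b/k-1}$ arises.
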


\begin{proof}
We extend the counting argument used in \propref{prop:Cnl}. As in the proof of that proposition,
\begin{align}
D_{\f{n}{m}}\cong C_{m} \wr S_{d}.
\end{align}
Consider a cycle of length $r$ in the induced permutation on the $d$ cycles
of $x^{d}$. If the total rotation along this cycle has order $k$ in $C_{m}$,
then the corresponding permutation on the $mr$ elements belonging to these $r$ cycles of $x^{d}$ consists of
 $m/k$ cycles of length $rk$.

Hence, for an element of $T(b,q)$, we must have $rk=b$. Thus
$k\mid \delta = \gcd(b,m)$ and $r=b/k$. For a fixed cycle of length $b/k$, there are
$\varphi(k)$ choices for its total rotation of order $k$, and, for each such
rotation, $m^{b/k-1}$ choices for the rotations assigned to the individual
cycles.

If $t_{k}$ denotes the number of cycles of length $b/k$ with total rotation
of order $k$, then
\begin{align}
\sum_{k\mid \delta}\f{b}{k}t_{k}=d.
\end{align}
Moreover, the resulting number of cycles of length $b$ is
\begin{align}
\sum_{k\mid \delta}\f{m}{k}t_{k} = \f{m}{b}\sum_{k\mid \delta}\f{b}{k}t_{k} = \f{md}{b} = q.
\end{align}
The number of choices for the induced permutation in $S_{d}$, together with the rotation data, is
\begin{align}
d!\prod_{k\mid \delta}\f{1}{t_{k}!}\left(\f{\varphi(k)m^{\f{b}{k}-1}}{b/k}\right)^{t_{k}}.
\end{align}
Summing over all possible $(t_{k})_{k\mid \delta}$ gives the result.
\end{proof}

For each nonempty $S\subset\mcP(n)$, set
\begin{align}
\delta_{S} \ceq \gcd(b,\la_{S}).
\end{align}
Since
\begin{align}
C(b,q)=\bigcup_{\ell\in\mcP(n)}
\left(T(b,q)\cap D_{\f{n}{\ell}}\right),
\end{align}
and
\begin{align}
\bigcap_{\ell\in S}
\left(T(b,q)\cap D_{\f{n}{\ell}}\right)
=
T(b,q)\cap D_{\f{n}{\lambda_S}},
\end{align}
the inclusion-exclusion principle and \lemref{lem:TDnL} give the formula for $\vt{C}$:
\begin{align}
\vt{C(b,q)}
&= \sum_{\substack{S\subset\mcP(n)\\ S\ne\emptyset}}
(-1)^{\vt{S}+1}
\vt{T(b,q)\cap D_{\f{n}{\lambda_{S}}}} \\
&= \sum_{\substack{S\subset\mcP(n)\\ S\ne\emptyset}}
\left((-1)^{\vt{S}+1}\left(\f{n}{\la_{S}}\right)!
\sum_{(t_{k})_{k\mid \delta_{S}}}
\prod_{k\mid \delta_{S}}\f{1}{t_{k}!}\left(\f{\varphi(k)\lambda_{S}^{\f{b}{k}-1}}{b/k}\right)^{t_{k}}\right),
\label{eq:calcC}
\end{align}
where the inner sum is taken over all tuples $(t_{k})$ of nonnegative integers indexed by $k\mid \delta_{S}$ satisfying
\begin{align}
\sum_{k\mid \delta_{S}}\f{b}{k}t_{k} = \f{n}{\la_{S}}.
\end{align}

The exact values appearing in \secref{sec:class4} were computed from
\eqref{eq:Nbqa}, \eqref{eq:calcD}, and \eqref{eq:calcC} using exact arithmetic.
\OL

\needspace{9\baselineskip} 
\section{\texorpdfstring{Passports $[a^{p}, b^{q}, n]$ of Genus at Least~2}
{Passports [a\^p, b\^q, n] of Genus at Least 2}}
\label{sec:class4}

In this section, we prove that every uniform passport $[a^{p}, b^{q}, n]$ with $n=pa=qb$,
$2 \le p < q$, and genus at least~$2$ admits a \dde with trivial automorphism group.
\HL


\subsection{Proof Strategy}
\label{sec:strategy}

As described in \secref{sec:setup}, the existence of a dessin with passport $[a^{p}, b^{q}, n]$ and trivial automorphism group can be established by showing that $\vt{N(b,q,a)}$ is greater than either $\vt{C(b,q)}$ or $\vt{D(n)}$.

Since $D(n)$ depends only on $n$, it is easier to handle in the proof. However, because it is a larger set, comparing its size with that of $N(b,q,a)$ requires a stronger estimate. Numerical computations also show that the gaps between $\vt{N}$ and $\vt{C}$ or
$\vt{D}$ tend to increase as $a$ and $b$ increase, whereas for small $a$ and $b$,
these values are close to one another, and $\vt{N}$ may even be smaller than
$\vt{C}$ or $\vt{D}$.

More specifically, when $b \ge 5$, one can show that $\vt{N} > \vt{D}$ except for finitely many cases. When $b=3$ or $4$, however, establishing the analogous inequality becomes difficult, and it is more practical to fix $b$ and prove $\vt{N} > \vt{C}$. For $b=2$, the estimates become even more restrictive. In particular, when $b=2$ and $a=3$ or $4$, even proving $\vt{N} > \vt{C}$ is difficult, and in many cases one actually has $\vt{N} < \vt{C}$.

Taking these observations into account, and guided by numerical computations, we adopt the following proof strategy.

\begin{itemize}
\item For $b=2$ and each of $a=3,4$, we prove the existence of such a dessin for every admissible $q$ by explicitly
constructing a pair $(x,y)$ whose automorphism group is trivial.
\item For $b=2$ and $a\ge5$, we show that $\vt{N}>\vt{C}$ except for finitely many cases.
\item For $b=3,4$, we show, for each fixed $b$, that $\vt{N}>\vt{C}$ except for finitely many cases.
\item For $b\ge5$, we show, for every such $b$, that $\vt{N}>\vt{D}$ except for finitely many cases.
\end{itemize}

For the finitely many exceptional cases remaining after these estimates, we compute the exact values of
$\vt{N}$ and either $\vt{D}$ or $\vt{C}$ to verify that $\vt{N}>\vt{D}$ or $\vt{N}>\vt{C}$, using \eqref{eq:Nbqa},
\eqref{eq:calcD}, and \eqref{eq:calcC}.
When these counting comparisons do not establish existence, we instead construct a pair $(x,y)$ whose
automorphism group is trivial.
\HL

For the estimates, we use the following facts.
\begin{itemize}
\item The bounds from Stirling's formula:
\begin{align}
\label{eq:Stirling}
&\sqrt{2 \pi M}\left(\f{M}{e}\right)^{M}\exp(\f{1}{12M+1}) \le M! \le \sqrt{2 \pi M}\left(\f{M}{e}\right)^{M}\exp(\f{1}{12M}) \\
&(M \in \Zp).
\end{align}
\end{itemize}

Let $c, d > 0$ and $m > 1$ be constants.
\begin{itemize}
\item The function
\begin{align}
\left(1 + \f{c}{q}\right)^{dq}
\end{align}
is strictly increasing for $q > 0$ and its limit as $q \to \infty$ is $e^{cd}$.
Hence we have
\begin{align}
\left(1 + \f{c}{m}\right)^{dm} \le \left(1 + \f{c}{q}\right)^{dq} < e^{cd}\q(q \ge m).
\end{align}
Moreover,
\begin{align}
\left(1 + \f{c}{q}\right)^{-dq}
\end{align}
is strictly decreasing for $q > 0$ and
\begin{align}
e^{-cd} < \left(1 + \f{c}{q}\right)^{-dq} \le \left(1 + \f{c}{m}\right)^{-dm}\q(q \ge m).
\end{align}
\item The functions
\begin{align}
\sqrt{q+c} - \sqrt{q}, \q \sqrt[3]{q+c} - \sqrt[3]{q}
\end{align}
are strictly decreasing for $q > 0$ and both tend to $0$ as $q \to \infty$.
Hence
\begin{align}
0 &< \sqrt{q+c} - \sqrt{q} \le \sqrt{m+c} - \sqrt{m}\q(q \ge m), \\
0 &< \sqrt[3]{q+c} - \sqrt[3]{q} \le \sqrt[3]{m+c} - \sqrt[3]{m}\q(q \ge m).
\end{align}
\item Let
\begin{align}
f(q) \ceq \f{\log q}{\log (q+c)}\q(q > 1).
\end{align}
Since
\begin{align}
f'(q) = \f{(q+c)\log(q+c)-q\log q}{q(q+c)(\log (q+c))^{2}} > 0,
\end{align}
$f(q)$ is strictly increasing and
\begin{align}
\lim_{q \to \infty}f(q) = 1.
\end{align}
Hence,
\begin{align}
\f{\log m}{\log (m+c)} \le \f{\log q}{\log (q+c)} < 1\q(q \ge m).
\end{align}
Moreover,
\begin{align}
\f{\log (q+c)}{\log q}
\end{align}
is strictly decreasing and
\begin{align}
1 < \f{\log (q+c)}{\log q} \le \f{\log (m+c)}{\log m}\q(q \ge m).
\end{align}
\end{itemize}
\HL


\subsection{\texorpdfstring{The Subcase $b=2$}{The Subcase b=2}}

\subsubsection{\texorpdfstring{The Subcase $b=2$, $a=3$}{The Subcase b=2, a=3}}

The passport $[3^{p}, 2^{q}, n]$ with $n = 3p=2q$ can be written as $[3^{2t}, 2^{3t}, 6t]$.
For each such passport of genus at least~$2$, we construct permutations $x$ and $y$ such that the corresponding dessin
has trivial automorphism group.

By \eqref{eq:genus}, the genus is
\begin{align}
g = \f{6t-(2t+3t+1)}{2} + 1 = \f{t+1}{2},
\end{align}
where $g \in \Zz$ and $g \ge 2$. Hence $t$ is odd and $t \ge 3$.

Since the automorphism group is invariant under permutations of the three partitions,
it suffices to consider the passport
$[6t,2^{3t},3^{2t}]$,
that is, the case in which $x$, $y$, and
$z=(xy)^{-1}$ have cycle types
$(6t)$, $(2^{3t})$, and $(3^{2t})$, respectively.
Since the automorphism group is also invariant under relabeling of the edges,
we may fix $x$.

\begin{prop}
\label{prop:c4b2a3}
For each odd integer $t \ge 3$, fix $x = (1\ 2\ \ldots\ 6t)$ and define $y \in S_{6t}$ as follows:

For $t=3$, let
\begin{align}
y &= (1\ 4)(2\ 8)(3\ 9)(5\ 13)(6\ 16)(7\ 10)(11\ 15)(12\ 17)(14\ 18).
\end{align}

For $t \ge 5$, let
\begin{align}
\begin{aligned}
\label{eq:ytdef1}
y &= \A \B_{0}\B_{1}\cdots\B_{\f{t-5}{2}} \G, \\
\A &= (1\ 4)(2\ 6)(3\ 7)(5\ 8)(9\ 6t)(10\ 13), \\
\B_{i} &= (10i{+}11\ 10i{+}16)(10i{+}12\ 10i{+}17)(10i{+}14\ 6t{-}2i{-}1) \\
&\sq(10i{+}15\ 10i{+}18)(10i{+}19\ 6t{-}2i-2)(10i{+}20\ 10i{+}23), \\
\G &= (10w{+}11\ 10w{+}15)(10w{+}12\ 10w{+}16)(10w{+}14\ 10w+17), \\
& \sq \text{where }w = \f{t-3}{2}.
\end{aligned}
\end{align}
Then the following hold:
\begin{enumerate}
\item\label{itm:b2a3-1} $y$ has cycle type $(2^{3t})$.
\item\label{itm:b2a3-2} $(xy)^{-1}$ has cycle type $(3^{2t})$.
\item\label{itm:b2a3-3} For the \dde $\msD$ corresponding to $x$ and $y$, we have $\AD \cong \{ 1 \}$.
\end{enumerate}
\end{prop}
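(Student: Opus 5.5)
The plan is to verify the three claims by direct combinatorial bookkeeping on the explicit permutation $y$. The case $t=3$ is a finite check: eighteen points and nine transpositions. For it we list the transpositions, compute $xy$ by hand, and test $x^{k}y=yx^{k}$ for the divisors $k=n/\ell$ with $\ell\in\{2,3\}$ required by \propref{prop:D}. The work is in the general case $t\ge5$, which is built from a fixed head block $\A$, a periodic family $\B_{0},\dots,\B_{(t-5)/2}$ with period $10$, and a tail $\G$. The transpositions with a large entry $6t-2i-1$ or $6t-2i-2$ are the ones that wrap around the $n$-cycle.

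\textbf{Step 1, claim \ref{itm:b2a3-1}.} Count and locate supports. $\A$ contributes $6$ transpositions, each $\B_{i}$ contributes $6$, and $\G$ contributes $3$, for a total of $6+6\cdot\f{t-3}{2}+3=3t$. It then suffices to show that the supports are pairwise disjoint and cover $\{1,\dots,6t\}$.
\begin{itemize}
\item $\A$ covers $\{1,\dots,10,13,6t\}$.
\item $\B_{i}$ covers $\{10i+11,\dots,10i+20\}\setminus\{10i+13\}$, together with $10i+23$ (which is the missing point $10(i+1)+13$ of the next block) and the two high points $6t-2i-1$, $6t-2i-2$.
\item $\G$ fills the remaining low points $10w+11,\dots,10w+17$ except $10w+13$, which is supplied by $\B_{w-1}$ (or by $\A$ when $w=1$).
\item The high points $6t-2i-1$, $6t-2i-2$ for $0\le i\le \f{t-5}{2}$ fill exactly the interval $[5t-2,\,6t-1]$ above $10w+17=5t+2$. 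This needs a short check of the endpoint arithmetic, $10w+17=5t+2$ versus $6t-2\cdot\f{t-5}{2}-2=5t+3$.
\end{itemize}

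\textbf{Step 2, claim \ref{itm:b2a3-2}.} Since $(xy)(k)=y(k)+1$ (indices mod $6t$), the cycle type of $(xy)^{-1}$ equals that of $xy$. We trace orbits of $k\mapsto y(k)+1$ and show every orbit has length $3$. This is done block by block.
\begin{itemize}
\item Orbits contained in $\A$: for example $1\mapsto5\mapsto9\mapsto 6t+1=1$, and $2\mapsto7\mapsto4\mapsto2$.
\item Orbits internal to a generic $\B_{i}$, computed with $i$ as a symbol, so that a single verification covers all $i$.
\item Orbits that straddle consecutive blocks, through the points $10i+13$ and $10i+23$ and the high pairs $6t-2i-1$, $6t-2i-2$.
\item Orbits at the junction with $\G$ and around the wrap $6t\mapsto9$.
\end{itemize}
This bookkeeping is the most error-prone step. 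I will organise it as a table of $y(k)+1$ residues modulo $10$ within each block, plus a separate list for the high points. Transitivity is automatic because $x$ is an $n$-cycle.

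\textbf{Step 3, claim \ref{itm:b2a3-3}.} By \corref{cor:ADtrivial} and \propref{prop:D}, it suffices to show that $x^{k}yx^{-k}\neq y$ for every $1\le k\le n-1$. Conjugating by $x^{k}$ shifts each transposition $(i\ j)$ to $(i{+}k\ j{+}k)$ and preserves the cyclic difference $\pm(j-i)\bmod 6t$. The plan is therefore to pick a difference class realised by very few transpositions of $y$ and argue rigidity. The natural candidate is the class $\pm9$, realised by $(9\ 6t)$. A pair $(10i+14,\,6t-2i-1)$ never has difference $\pm9$. A pair $(10i+19,\,6t-2i-2)$ has difference $\pm9$ only when $t=2i+5$, that is, only for the last $\B_{i}$, namely $(5t-6\ \ 5t+3)$; the other low-difference pairs need to be checked too.

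So at most two transpositions of $y$ have difference $\pm9$. A shift $x^{k}$ fixing $y$ must permute this set of at most two transpositions. With one transposition, we need $k\equiv0$, a contradiction. With two, the shift must swap them, which forces a specific value of $k$. For that value we exhibit one further transposition, for instance $(1\ 4)$ or $(2\ 6)$ from $\A$, whose shift is not a transposition of $y$. If the class $\pm9$ turns out to be less rigid than expected, I would fall back to another rare difference coming from the irregular head $\A$, such as the difference-$2$ pair $(3\ 7)$ versus $(5\ 8)$. The main obstacle is really Step 2: making the period-$10$ orbit computation airtight at the boundary blocks, where $i=0$ meets $\A$, where $i=w-1$ meets $\G$, and where the high points wrap around.
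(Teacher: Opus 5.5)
Your Steps 1 and 2 follow the paper's argument: an explicit support count for $y$, then a tabulation of the $2t$ three-cycles of $s\mapsto y(s)+1$, including those that straddle blocks through $10i+13$, $10i+23$ and the high points. There are two slips in Step 1. The high points fill $[5t+3,6t-1]$, not $[5t-2,6t-1]$. Also, $10w+13=10(w-1)+23$ always comes from $\beta_{w-1}$, which is $\beta_0$ when $w=1$, not from $\alpha$. For $t=3$, reducing to $k\in\{6,9\}$ via Proposition~\ref{prop:D} is a legitimate shortcut over checking all $k$.

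Step 3 takes a genuinely different route. The paper evaluates at $e=6t-1$, which requires $y(k-1)=k+14$; a case analysis leaves only $k=10w+5$, and $e=6t$ then kills that value. You instead use that conjugation by $x^k$ preserves cyclic differences. The class $\pm9$ is indeed realised exactly by $(9\ 6t)$ and $(5t-6\ \ 5t+3)$. Your version is more structural. It also shows where the construction is nearly symmetric: the only shift carrying $(9\ 6t)$ onto the other difference-$9$ transposition is $k=5t-6$. For $t=3$ this equals $3t$, which is exactly the automorphism of the ``same algorithm'' $y$ in the Remark.

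That near-symmetry is where your one real error sits. You correctly note that a swap forces a specific $k$, namely $k=5t-6$; the opposite orientation would need $6t\mid 18$. But the witnesses you propose both fail. Under the shift by $5t-6$:
\begin{itemize}
\item $(1\ 4)\mapsto(5t-5\ \ 5t-2)$, the last transposition of $\beta_{w-1}$;
\item $(2\ 6)\mapsto(5t-4\ \ 5t)$, the first transposition of $\gamma$;
\item $(3\ 7)$ and $(5\ 8)$ land on the other two transpositions of $\gamma$.
\end{itemize}
The only transposition of $\alpha$ that works is $(10\ 13)\mapsto(5t+4\ \ 5t+7)$, because $y(5t+4)=5t-11$.

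A simpler repair is available. The shift sends $(5t-6\ \ 5t+3)$ to $(4t-12\ \ 4t-3)\ne(9\ 6t)$, so the swap is impossible outright. Equivalently, a swap would make $x^{2k}$ fix $(9\ 6t)$, forcing $2k\equiv0$, i.e.\ $k=3t\neq 5t-6$.

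You should also say explicitly why a shift cannot fix $(9\ 6t)$ by reversing its endpoints: $9\not\equiv-9\pmod{6t}$ for $t\ge5$. This is precisely where $t\ge5$ is used, and it is what fails at $t=3$.

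With these corrections your Step 3 is a complete and shorter proof of (3). Your fallback ``difference-2 pair $(3\ 7)$ versus $(5\ 8)$'' is mislabelled, since those differences are $4$ and $3$, but it is not needed.
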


\begin{proof}
When $t=3$, $y$ has cycle type $(2^{9})$, and
\begin{align}
(xy)^{-1} = (1\ 14\ 5)(2\ 4\ 9)(3\ 8\ 10)(6\ 13\ 17)(7\ 16\ 11)(12\ 15\ 18)
\end{align}
has cycle type $(3^{6})$. Moreover, a direct calculation shows that $x^{k}y \ne yx^{k}$ for all $1 \le k \le 17$.
Hence $\AD \cong \{1\}$.

In what follows, assume that $t \ge 5$.
\HL

\noindent
\ref{itm:b2a3-1}
In \eqref{eq:ytdef1}, every cycle in $y$ has length~$2$, and the number of these cycles is
\begin{align}
6 + 6\left(\f{t-5}{2}+1\right) + 3 = 3t.
\end{align}
Therefore, to prove that $y$ has cycle type $(2^{3t})$, it suffices to show that every element of $\{ 1, \ldots, 6t \}$ appears
in \eqref{eq:ytdef1}.

Since $w = (t-3)/2$, we have
\begin{align}
6t = 12w+18.
\end{align}
Moreover, the index $i$ of $\B_{i}$ ranges over
\begin{align}
\label{eq:irange1}
0 \le i \le \f{t-5}{2} = w-1.
\end{align}
We verify that every element $s \in \{ 1, \ldots, 6t \} = \{ 1, \ldots, 12w+18 \}$ appears in \eqref{eq:ytdef1}.
\begin{itemize}
\item If $s \le 10$, then $s$ appears in $\A$.
\item If $11 \le s \le 10w+10$, then write $s = 10u+v$ with $1 \le v \le 10$. Then $1 \le u \le w$, and $v$ ranges
from $1$ to $10$ for each $u$. In this case, $s$ appears in
\begin{align}
\begin{dcases}
\A & (u = 1,\ v = 3), \\
\B_{u-2} & (u \ge 2,\ v=3), \\
\B_{u-1} & (v  \ne 3).
\end{dcases}
\end{align}
\item The elements $10w+11$, $10w+12$, $10w+14$, $10w+15$, $10w+16$, and $10w+17$ appear in $\G$.
\item The element $10w+13$ appears in $\B_{\f{t-5}{2}} = \B_{w-1}$.
\item If $10w+18 \le s \le 12w+17$ and $s$ is even, then $s$ appears in $\B_{6w+8-s/2}$ as the entry $6t-2i-2$.
\item If $10w+18 \le s \le 12w+17$ and $s$ is odd, then $s$ appears in $\B_{6w+8-(s-1)/2}$ as the entry $6t-2i-1$.
\item $6t = 12w+18$ appears in $\A$.
\end{itemize}
Therefore, every element of $\{ 1, \ldots, 6t \}$ appears in \eqref{eq:ytdef1}, and hence $y$ has cycle type $(2^{3t})$.
\HL

\noindent
\ref{itm:b2a3-2}
Since $xy$ and $(xy)^{-1}$ have the same cycle type, it suffices to show that $xy$ has cycle type $(3^{2t})$.

\tabref{tab:xycycles1} lists the $2t$ cycles of length~$3$ in $xy$ arising from \eqref{eq:ytdef1}.

\begin{table}[htbp]
\centering
\small
\begin{tabular}{|c|c|c|}
\hline
$s$ & $y\cdot s$ & $xy\cdot s$ \\
\hline
\multicolumn{3}{|c|}{$1 \le s \le 9$} \\
\hline
1 & 4 & 5 \\
5 & 8 & 9 \\
9 & $6t$ & 1 \\
\hline
2 & 6 & 7 \\
7 & 3 & 4 \\
4 & 1 & 2 \\
\hline
3 & 7 & 8 \\
8 & 5 & 6 \\
6 & 2 & 3 \\
\hline
\multicolumn{3}{|c|}{$10 \le s \le 10w+9$, $10w+19 \le s \le 12w+18$} \\
\hline
$10i+10$ & $10i+13$ & $10i+14$ \\
$10i+14$ & $6t-2i-1$ & $6t-2i$ \\
$6t-2i$ & $10i + 9$ & $10i+10$ \\
\hline
$10i+11$ & $10i+16$ & $10i+17$ \\
$10i+17$ & $10i+12$ & $10i+13$ \\
$10i+13$ & $10i+10$ & $10i+11$ \\
\hline
$10i+12$ & $10i+17$ & $10i+18$ \\
$10i+18$ & $10i+15$ & $10i+16$ \\
$10i+16$ & $10i+11$ & $10i+12$ \\
\hline
$10i+15$ & $10i+18$ & $10i+19$ \\
$10i+19$ & $6t-2i-2$ & $6t-2i-1$ \\
$6t-2i-1$ & $10i+14$ & $10i+15$ \\
\hline
\multicolumn{3}{|c|}{$10w+10 \le s \le 10w+18$} \\
\hline
$10(w-1)+20$ & $10(w-1)+23$ & $10w+14$ \\
$10w+14$ & $10w+17$ & $10w{+}18  = 6t{-}2(w{-}1){-}2$ \\
$6t{-}2(w{-}1){-}2$ & $10(w-1)+19$ & $10(w-1)+20$ \\
\hline
$10w+11$ & $10w+15$ & $10w+16$ \\
$10w+16$ & $10w+12$ & $10(w-1)+23$ \\
$10(w-1)+23$ & $10(w-1)+20$ & $10w+11$ \\
\hline
$10w+12$ & $10w+16$ & $10w+17$ \\
$10w+17$ & $10w+14$ & $10w+15$ \\
$10w+15$ & $10w+11$ & $10w+12$ \\
\hline
\end{tabular}
\HL
\caption{Cycles of $xy$ ($s = 1, \ldots, 6t$)}\label{tab:xycycles1}
\end{table}

We now verify that every element $s \in \{ 1, \ldots, 6t \}$ appears in one of the cycles of $xy$. Note that $6t = 12w+18$.
\begin{itemize}
\item If $1 \le s \le 9$, then $s$ appears in the first three cycles.
\item If $10 \le s \le 10w+9$, then write $s = 10i+u$ with $10 \le u \le 19$. Then $0 \le i \le w-1$, and $s$ appears as $10i+u$.
\item If $10w+10 \le s \le 10w+18$, then $s$ appears in the last three cycles.
\item If $10w+19 \le s \le 12w+18$, then $6t-2(w-1)-1 \le s \le 6t$. If $s$ is even, $s$ appears as $6t-2i$ with $i = 3t-s/2$.
If $s$ is odd, $s$ appears as $6t-2i-1$ with $i = 3t-(s+1)/2$.
\end{itemize}

Since the $2t$ cycles listed in \tabref{tab:xycycles1} contain $6t$ entries in total, every element appears exactly once.
Hence $xy$ has cycle type $(3^{2t})$.
\HL

\noindent
\ref{itm:b2a3-3}
By \corref{cor:ADtrivial}\ref{itm:ADtrivial2}, it suffices to show that
\begin{align}
\text{for every}\ 1 \le k \le 6t-1, \ x^{k}y \ne yx^{k}.
\end{align}
Equivalently,
\begin{align}
&\text{for every}\ 1 \le k \le 6t-1,\\*
\label{eq:6txkytrivial}
&\sq\text{there exists}\ e \in E = \{ 1, \ldots, 6t \}\ \text{such that}\ x^{k}y\cdot e \ne yx^{k}\cdot e.
\end{align}

In what follows, the elements of $E$ are taken modulo $6t$.
\HL

\noindent
(i) Let $e = 6t-1$. Then
\begin{align}
x^{k}y\cdot e &= x^{k} \cdot 14 = k+14, \\
yx^{k}\cdot e &= y\cdot(k-1).
\end{align}
The equality $x^{k}y\cdot e = yx^{k}\cdot e$ holds only if
$y$ maps $k-1$ to $k+14$, thus increasing it by $15$.

From \eqref{eq:ytdef1}, the possible values of $k-1$ are
\begin{align}
9,\q 10i+14,\q 6t-2i-1,\q 10i+19,\q 6t-2i-2.
\end{align}
We examine these cases separately.

\begin{itemize}
\item If $k-1=9$, then
\begin{align}
y\cdot 9=6t\equiv 9+15 = 24 \pmod{6t},
\end{align}
hence
\begin{align}
0 \equiv 24 \pmod{6t}.
\end{align}
Since $t \ge 5$, this is impossible.
\item If $k-1=10i+14$, then
\begin{align}
y\cdot (10i+14)=6t-2i-1 \equiv 10i+29 \pmod{6t}.
\end{align}
Since
\begin{align}
10i+29 \le 10\cdot\f{t-5}{2}+29 = 5t+4 < 6t,
\end{align}
we have
\begin{align}
6t-2i-1 = 10i+29,
\end{align}
hence $i = (t-5)/2 = w-1$. Therefore,
\begin{align}
k = 10(w-1)+14+1 = 10w+5.
\end{align}
\item If $k-1=6t-2i-1$, then $i=0$ would give $k=6t$, which is excluded.
For $i\ge1$, the required congruence gives
\begin{align}
10i+14\equiv6t-2i-1+15\pmod{6t},
\end{align}
hence
\begin{align}
12i\equiv0\pmod{6t}.
\end{align}
However,
\begin{align}
0<12i\le12\cdot \f{t-5}{2}<6t,
\end{align}
a contradiction.
\item If $k-1=10i+19$, then
\begin{align}
y\cdot (10i+19)=6t-2i-2 \equiv 10i+19+15 \pmod{6t},
\end{align}
hence
\begin{align}
12(i+3) \equiv 0 \pmod{6t}.
\end{align}
Thus $t\mid2(i+3)$. Since $t$ is odd, $t\mid i+3$. However,
\begin{align}
3 \le i+3 \le \f{t-5}{2} + 3 = t - \f{t-1}{2}< t,
\end{align}
a contradiction.
\item If $k-1=6t-2i-2$, then
\begin{align}
y\cdot (6t-2i-2)=10i+19 \equiv 6t-2i-2+15 \pmod{6t}, 
\end{align}
hence
\begin{align}
12i+6\equiv0\pmod{6t}.
\end{align}
However,
\begin{align}
0 < 12i+6 \le 12\cdot\f{t-5}{2}+6 = 6t-24 < 6t,
\end{align}
a contradiction.
\end{itemize}
Thus, the equality $x^{k}y\cdot e=yx^{k}\cdot e$ can hold only if
\begin{align}
\label{eq:k10wp5}
k = 10w+5.
\end{align}
\HL

\noindent
(ii) Let $e=6t$. By \eqref{eq:k10wp5}, it remains to consider
$k=10w+5=10(w-1)+15$.
Since $w-1=(t-5)/2$, \eqref{eq:ytdef1} gives
\begin{align}
yx^{k}\cdot e = y\cdot k =y\cdot(10(w-1)+15) =10(w-1)+18 = 10w+8.
\end{align}
On the other hand,
\begin{align}
x^{k}y\cdot e=x^{k}\cdot9=k+9=10w+14.
\end{align}
Hence $x^{k}y\cdot e\ne yx^{k}\cdot e$.
Therefore, the equality
$x^{k}y\cdot e=yx^{k}\cdot e$ cannot hold for both $e=6t-1$ and $e=6t$.

Thus, \eqref{eq:6txkytrivial} holds. Consequently, for the corresponding \dde\ $\msD$, we have
$\AD \cong \{ 1 \}$.
\end{proof}

\begin{rem}
When $t=3$, applying the same algorithm as in the case $t\ge5$, we obtain
\begin{align}
y = (1\ 4)(2\ 6)(3\ 7)(5\ 8)(9\ 18)(10\ 13)(11\ 15)(12\ 16)(14\ 17).
\end{align}
This satisfies \ref{itm:b2a3-1} and \ref{itm:b2a3-2}.
However, it does not satisfy \ref{itm:b2a3-3}; in fact, $\AD\cong C_{2}$.
\end{rem}

\begin{prop}
\label{prop:class4b2a3}
If a uniform passport $[n, 2^{q}, 3^{p}]$ with $n = 2q = 3p$ has genus at least~$2$, then it admits a \dde with trivial automorphism group.
\end{prop}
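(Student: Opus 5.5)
This is a direct corollary of \propref{prop:c4b2a3}; the only work is to match the arithmetic of the passport to the range of that proposition.

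First, parametrize the passport. From $n = 2q = 3p$ we get $6 \mid n$, so we write $n = 6t$ with $q = 3t$ and $p = 2t$. By \eqref{eq:genus} the genus is
\begin{align}
g = \f{6t-(1+3t+2t)}{2}+1 = \f{t+1}{2}.
\end{align}
Since $g$ is a nonnegative integer, $t$ must be odd. The hypothesis $g \ge 2$ then forces $t \ge 3$.

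Next, apply \propref{prop:c4b2a3}. For each odd $t\ge3$ it supplies, with $x = (1\ 2\ \ldots\ 6t)$, a permutation $y$ of cycle type $(2^{3t})$ such that $(xy)^{-1}$ has cycle type $(3^{2t})$. Since $\gen{x}$ is transitive, $\gen{x,y}$ is transitive on $E$. Hence, by the correspondence between transitive permutation pairs and dessins recalled in \secref{sec:preliminaries}, the pair $(x,y)$ determines a \dde $\msD$. By \defref{def:passport}, with $x$, $y$, $z=(xy)^{-1}$ encoding black vertices, white vertices and faces, this dessin has passport $[6t, 2^{3t}, 3^{2t}] = [n,2^{q},3^{p}]$. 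Part \ref{itm:b2a3-3} of \propref{prop:c4b2a3} gives $\AD \cong \{1\}$.

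Finally, if one wants the statement for the other orderings, such as $[3^{p},2^{q},n]$, this follows because the monodromy and automorphism groups are invariant under permuting the roles of $x$, $y$, $z$, as noted after \defref{def:monog}. There is no real obstacle here. The only point to check is that the parity and size constraints on $t$ coming from $g\in\Zz$ and $g\ge 2$ match exactly the range ``odd $t\ge3$'' covered by \propref{prop:c4b2a3}, including the separately handled case $t=3$.
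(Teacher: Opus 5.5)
Your proposal is correct and follows the paper's proof: it likewise writes $n=6t$, $p=2t$, $q=3t$, computes $g=(t+1)/2$ to get $t$ odd with $t\ge 3$, and then applies \propref{prop:c4b2a3} with a fixed $n$-cycle $x$. Your remarks on transitivity and on reading off the passport only make explicit what the paper leaves implicit.
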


\begin{proof}
Since $n = 2q = 3p$, we may write $n = 6t$, $p = 2t$, $q = 3t$ for $t \in \Zp$. By \eqref{eq:genus}, the genus is
\begin{align}
g &= \f{6t-(2t+3t+1)}{2}+1 = \f{t+1}{2} \ge 2.
\end{align}
Since $g$ is an integer, $t$ is odd, and since $g\ge2$, we have $t\ge3$.

Since the automorphism group is invariant under relabeling of the edges, we may fix an $n$-cycle $x$.
Then \propref{prop:c4b2a3} provides a permutation $y$ such that the pair
$(x,y)$ has monodromy group $G=\gen{x,y}$. The corresponding
dessin $\msD$ satisfies $\AD \cong \{ 1 \}$.
\end{proof}
\HL


\subsubsection{\texorpdfstring{The Subcase $b=2$, $a=4$}{The Subcase b=2, a=4}}

The passport $[4^{p}, 2^{q}, n]$ with $n = 4p = 2q$ can be written as $[4^{t}, 2^{2t}, 4t]$.
For each such passport of genus at least~$2$, we construct permutations $x$ and $y$ such that the corresponding dessin
has trivial automorphism group.

By \eqref{eq:genus}, the genus is
\begin{align}
g = \f{4t-(t+2t+1)}{2}+1 = \f{t+1}{2},
\end{align}
where $g \in \Zz$ and $g \ge 2$. Hence $t$ is odd and $t \ge 3$.

Since the automorphism group is invariant under permutations of the three partitions,
it suffices to consider the passport
$[4t,2^{2t},4^{t}]$,
that is, the case in which $x$, $y$, and
$z=(xy)^{-1}$ have cycle types
$(4t)$, $(2^{2t})$, and $(4^{t})$, respectively.
Since the automorphism group is also invariant under relabeling of the edges,
we may fix $x$.

\begin{prop}
\label{prop:c4b2a4}
For each odd integer $t \ge 3$, fix $x = (1\ 2\ \ldots\ 4t)$ and define $y \in S_{4t}$ as follows:

For $t=3$, let
\begin{align}
y = (1\ 3)(2\ 6)(4\ 10)(5\ 7)(8\ 11)(9\ 12).
\end{align}

For $t \ge 5$, let
\begin{align}
\begin{aligned}
\label{eq:ytdef2}
y &= \A \B_{0}\B_{1}\cdots\B_{\f{t-5}{2}} \G, \\
\A &= (1\ 3)(2\ 5)(4\ 6)(7\ 9), \\
\B_{i} &= (7i{+}8\ 7i{+}12)(7i{+}10\ 4t{-}i)(7i{+}11\ 7i{+}13)(7i{+}14\ 7i{+}16), \\
\G &= (7w{+}8\ 7w{+}11)(7w{+}10\ 7w+12), \\
&\sq \text{where }w = \f{t-3}{2}.
\end{aligned}
\end{align}
Then the following hold:
\begin{enumerate}
\item\label{itm:b2a4-1} $y$ has cycle type $(2^{2t})$.
\item\label{itm:b2a4-2} $(xy)^{-1}$ has cycle type $(4^{t})$.
\item\label{itm:b2a4-3} For the \dde $\msD$ corresponding to $x$ and $y$, we have $\AD \cong \{ 1 \}$.
\end{enumerate}
\end{prop}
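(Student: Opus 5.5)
The plan follows the proof of \propref{prop:c4b2a3} closely. The case $t=3$ will be settled by direct computation. With $y=(1\ 3)(2\ 6)(4\ 10)(5\ 7)(8\ 11)(9\ 12)$ I would write out $(xy)^{-1}$ explicitly, check that it has cycle type $(4^{3})$, and check that $x^{k}y\ne yx^{k}$ for $1\le k\le 11$. By \corref{cor:ADtrivial}\ref{itm:ADtrivial2} this gives $\AD\cong\{1\}$. From now on assume $t\ge5$, put $w=(t-3)/2$ so that $4t=8w+12$, and note that the index $i$ of $\B_{i}$ runs over $0\le i\le w-1$.

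For \ref{itm:b2a4-1}, I first count the transpositions in \eqref{eq:ytdef2}. There are $4$ in $\A$, $4$ in each of the $w$ blocks $\B_{i}$, and $2$ in $\G$, which gives $4+4w+2=2t$. It then suffices to check that every $s\in\{1,\dots,4t\}$ occurs.
\begin{itemize}
\item The elements $1,\dots,7$ and $9$ lie in $\A$, and $8$ lies in $\B_{0}$.
\item In the range $8\le s\le 7w+7$, the block $\B_{i}$ contains $7i+8,7i+10,\dots,7i+14,7i+16$. The missing residues are covered by neighbouring blocks: $7i+15=7(i+1)+8$ lies in $\B_{i+1}$, and $7i+9=7(i-1)+16$ lies in $\B_{i-1}$.
\item The elements $7w+8,7w+10,7w+11,7w+12$ lie in $\G$, and $7w+9$ lies in $\B_{w-1}$.
\item The entries $4t-i$ with $0\le i\le w-1$ fill exactly the remaining interval $7w+13=4t-w+1\le s\le 4t$.
\end{itemize}

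For \ref{itm:b2a4-2}, it is enough to show that $xy$ has cycle type $(4^{t})$. As in \tabref{tab:xycycles1}, I would tabulate $s\mapsto y\cdot s\mapsto xy\cdot s=y\cdot s+1$ and list the cycles in three groups. The first group is the few cycles lying inside $\A$ together with the first entries of $\B_{0}$ and $4t$. The second group is the two generic $4$-cycles contributed by each $\B_{i}$; these involve $7i+8,\dots,7i+16$ and $4t-i$, and wrap into $\B_{i\pm1}$. The third group is the closing cycles formed by $\B_{w-1}$ and $\G$. The cycles should have length $4$ and total $t$ in number, hence $4t$ entries. A coverage check parallel to the one for \ref{itm:b2a4-1} then shows that each element appears exactly once. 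I expect the main labour to be getting the generic cycles of $\B_{i}$ and the boundary cycles near $\A$ and $\G$ exactly right. The wrap-around entries $4t-i$ in particular need care, since $x$ sends $4t$ to $1$.

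For \ref{itm:b2a4-3}, I would use \corref{cor:ADtrivial}\ref{itm:ADtrivial2} and, for each $1\le k\le 4t-1$, exhibit an edge $e$ with $x^{k}y\cdot e\ne yx^{k}\cdot e$, working modulo $4t$. The first test edge is $e=4t$. Since $y\cdot 4t=10$ (from $\B_{0}$), we get $x^{k}y\cdot e=k+10$, while $yx^{k}\cdot e=y\cdot k$. Equality would therefore force $y$ to shift $k$ by exactly $10$. I would run through the transpositions of \eqref{eq:ytdef2}, whose differences are $2,3,4,6$ (small) or of the form $\pm(4t-8i-10)$. Using $t$ odd and the bounds $0\le i\le w-1$, I expect the congruences to rule out every case except one or two explicit values of $k$. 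For those survivors I would test a second edge, such as $e=4t-1$ (with $y\cdot(4t-1)=17$ from $\B_{1}$) or $e=1$, and compute both sides directly to get a mismatch. This elimination is the main obstacle: the residual values of $k$ cannot be known in advance, and the second test edge must be chosen so that it catches all of them.
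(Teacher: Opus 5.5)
Your outline matches the paper's: the transposition count $4+4w+2=2t$ and the coverage check for (1), an explicit table of $xy$-cycles for (2), and for (3) the test edge $e=4t$ followed by a second edge. The problem is the step you defer in (3): it is left open, and the edges you suggest for it fail.

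Carry out the $e=4t$ elimination first. The shifts $y\cdot s-s$ are $\pm2,\pm3,\pm4$ (there is no shift $6$), together with $\pm(4t-8i-10)$ from $(7i{+}10\ 4t{-}i)$.
\begin{itemize}
\item Since $4t\ge20$, the small shifts never equal $10$ modulo $4t$.
\item The move $4t-i\mapsto 7i+10$ would need $4t\mid 8i$ with $0\le 8i\le 4t-20$. This forces $i=0$ and $k\equiv 0$, which is excluded.
\item The move $7i+10\mapsto 4t-i$ needs $4t\mid 8i+20$ with $20\le 8i+20\le 4t$. This forces $i=w-1$.
\end{itemize}
So there is exactly one survivor, $k=7w+3=7(w-1)+10$, and the oddness of $t$ plays no role in this step.

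Neither of your proposed second edges handles this survivor for all $t$.
\begin{itemize}
\item For $e=1$ one gets $x^{k}y\cdot 1=k+3=7w+6$ and $yx^{k}\cdot 1=y\cdot(7(w-1)+11)=7(w-1)+13=7w+6$. So $e=1$ never separates.
\item For $e=4t-1$ one gets $x^{k}y\cdot e=7w+20$ and $yx^{k}\cdot e=y\cdot(7w+2)$. When $t\ge7$ this equals $7w$ via $\B_{w-2}$, so the edge works there. For $t=5$ ($w=1$, $k=10$, $4t=20$) one has $yx^{10}\cdot 19=y\cdot 9=7\equiv 27=x^{10}y\cdot 19$, so it fails.
\end{itemize}
Also, $y\cdot(4t-1)=17$ comes from $\B_{1}$ only when $t\ge7$; for $t=5$ it comes from $\G=(15\ 18)(17\ 19)$.

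The paper uses $e=2$, which works uniformly: $x^{k}y\cdot 2=x^{k}\cdot 5=7w+8$, whereas $yx^{k}\cdot 2=y\cdot(7(w-1)+12)=7(w-1)+8=7w+1$.

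For (2), your grouping is correct. The cycles of $xy$ are:
\begin{itemize}
\item $(1\ 4\ 7\ 10)$ and $(2\ 6\ 5\ 3)$;
\item for each $0\le i\le w-1$, the pair $(7i{+}8\ 7i{+}13\ 7i{+}12\ 7i{+}9)$ and $(7i{+}17\ 4t{-}i\ 7i{+}11\ 7i{+}14)$;
\item the closing cycle $(7w{+}8\ 7w{+}12\ 7w{+}11\ 7w{+}9)$.
\end{itemize}
This gives $2+2w+1=t$ cycles. The last generic cycle closes because $4t-w=7w+12$, so the transposition $(7w{+}10\ 7w{+}12)$ in $\G$ plays the role of $(7w{+}10\ 4t{-}w)$.
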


\begin{proof}
When $t=3$, $y$ has cycle type $(2^{6})$, and
\begin{align}
(xy)^{-1} = (1\ 9\ 11\ 4)(2\ 3\ 6\ 7)(5\ 10\ 12\ 8)
\end{align}
has cycle type $(4^{3})$. Moreover, a direct calculation shows that $x^{k}y \ne yx^{k}$ for all $1 \le k \le 11$.
Hence $\AD \cong \{1\}$.

In what follows, assume that $t \ge 5$.
\HL

\noindent
\ref{itm:b2a4-1}
In \eqref{eq:ytdef2}, every cycle in $y$ has length~$2$, and the number of these cycles is
\begin{align}
4 + 4\left(\f{t-5}{2}+1\right) + 2 = 2t.
\end{align}
Therefore, to prove that $y$ has cycle type $(2^{2t})$, it suffices to show that every element of $\{ 1, \ldots, 4t \}$ appears
in \eqref{eq:ytdef2}.

Since $w = (t-3)/2$, we have
\begin{align}
4t = 8w+12.
\end{align}
Moreover, the index $i$ of $\B_{i}$ ranges over
\begin{align}
0 \le i \le \f{t-5}{2} = w-1.
\end{align}
We verify that every element $s \in \{ 1, \ldots, 4t \} = \{ 1, \ldots, 8w+12 \}$ appears in \eqref{eq:ytdef2}.
\begin{itemize}
\item If $s \in \{1,2,3,4,5,6,7,9\}$, then $s$ appears in $\A$.
\item If $s \in \{8,10,11,12,13,14\}$, then $s$ appears in $\B_{0}$.
\item If $15 \le s \le 7w+7$, then write $s = 7u+v$ with $1 \le v \le 7$. Then $2 \le u \le w$, and $v$
ranges from $1$ to $7$ for each $u$. In this case, $s$ appears in
\begin{align}
\begin{dcases}
\B_{u-2} & (v = 2), \\
\B_{u-1} & (v \ne 2).
\end{dcases}
\end{align}
\item The elements $7w+8$, $7w+10$, $7w+11$, and $7w+12$ appear in $\G$.
\item The element $7w+9$ appears in $\B_{\f{t-5}{2}} = \B_{w-1}$.
\item If $7w+13 \le s \le 8w+12$, then $s$ appears in $\B_{8w+12-s}$ as the entry $4t - i$.
\end{itemize}
Therefore, every element of $\{ 1, \ldots, 4t \}$ appears in \eqref{eq:ytdef2}, and hence $y$ has cycle type $(2^{2t})$.
\HL

\noindent
\ref{itm:b2a4-2}
Since $xy$ and $(xy)^{-1}$ have the same cycle type, it suffices to show that $xy$ has cycle type $(4^{t})$.

\tabref{tab:xycycles2} lists the $t$ cycles of length~$4$ in $xy$ arising from \eqref{eq:ytdef2}.

\begin{table}[htbp]
\centering
\small
\begin{tabular}{|c|c|c|}
\hline
$s$ & $y\cdot s$ & $xy\cdot s$ \\
\hline
\multicolumn{3}{|c|}{$1 \le s \le 7$, $s = 10$} \\
\hline
1 & 3 & 4 \\
4 & 6 & 7 \\
7 & 9 & 10 \\
10 & $4t$ & 1 \\
\hline
2 & 5 & 6 \\
6 & 4 & 5 \\
5 & 2 & 3 \\
3 & 1 & 2 \\
\hline
\multicolumn{3}{|c|}{$8 \le s \le 7w{+}7$ ($s \ne 10$), $s = 7w{+}10$, $7w{+}13 \le s \le 8w{+}12$} \\
\hline
$7i+8$ & $7i+12$ & $7i+13$ \\
$7i+13$ & $7i+11$ & $7i+12$ \\
$7i+12$ & $7i+8$ & $7i+9$ \\
$7i+9$ & $7i+7$ & $7i+8$ \\
\hline
$7(i+1)+10$ & $4t-(i+1)$ & $4t-i$ \\
$4t-i$ & $7i+10$ & $7i+11$ \\
$7i+11$ & $7i+13$ & $7i+14$ \\
$7i+14$ & $7i+16$ & $7(i+1)+10$ \\
\hline
\multicolumn{3}{|c|}{$7w+8 \le s \le 7w+12$ ($s \ne 7w+10$)} \\
\hline
$7w+8$ & $7w+11$ & $7w+12$ \\
$7w+12$ & $7w+10$ & $7w+11$ \\
$7w+11$ & $7w+8$ & $7(w-1)+16$ \\
$7(w-1)+16$ & $7(w-1)+14$ & $7w+8$ \\
\hline
\end{tabular}
\HL
\caption{Cycles of $xy$ ($s = 1, \ldots, 4t$)}\label{tab:xycycles2}
\end{table}

We now verify that every element $s \in \{ 1, \ldots, 4t \}$ appears in one of the cycles of $xy$. Note that $4t = 8w+12$.
\begin{itemize}
\item If $1 \le s \le 7$, then $s$ appears in the first two cycles.
\item If $8 \le s \le 7w+7$, then write $s = 7i+v$ with $8 \le v \le 14$. Then $0 \le i \le w-1$. The element $10$ appears in the first cycle.
If $v=10$ and $i\ge1$, set $j=i-1$. Then $s=7(j+1)+10$, and $s$ appears in the row indexed by $j$.
Otherwise $s$ appears as $7i+v$.
\item The elements $7w+8$, $7w+9$, $7w+11$, and $7w+12$ appear in the last cycle.
\item The element $7w+10$ appears as $7(i+1)+10$ with $i=w-1$.
\item If $7w+13 \le s \le 8w+12$, then $4t-(w-1) \le s \le 4t$. The element $s$ appears as $4t-i$ with $i = 4t-s$.
\end{itemize}

Since the $t$ cycles listed in \tabref{tab:xycycles2} contain $4t$ entries in total, every element appears exactly once.
Hence $xy$ has cycle type $(4^{t})$.
\HL

\needspace{2\baselineskip} 
\noindent
\ref{itm:b2a4-3}
By \corref{cor:ADtrivial}\ref{itm:ADtrivial2}, it suffices to show that
\begin{align}
\text{for every}\ 1 \le k \le 4t-1, \ x^{k}y \ne yx^{k}.
\end{align}
Equivalently,
\begin{align}
&\text{for every}\ 1 \le k \le 4t-1,\\*
\label{eq:4txkytrivial}
&\sq\text{there exists}\ e \in E = \{ 1, \ldots, 4t \}\ \text{such that}\ x^{k}y\cdot e \ne yx^{k}\cdot e.
\end{align}

In what follows, the elements of $E$ are taken modulo $4t$.
\HL

\noindent
(i) Let $e = 4t$. Then
\begin{align}
x^{k}y\cdot e &= x^{k} \cdot 10 = k+10, \\
yx^{k}\cdot e &= y\cdot k.
\end{align}
The equality $x^{k}y\cdot e = yx^{k}\cdot e$ holds only if
$y$ maps $k$ to $k+10$, thus increasing it by $10$.

From \eqref{eq:ytdef2}, the possible values of $k$ are
\begin{align}
7i+10,\q 4t-i.
\end{align}
We examine these cases separately.

\begin{itemize}
\item If $k=7i+10$, then
\begin{align}
y\cdot (7i+10)=4t-i \equiv 7i+10+10 \pmod{4t}.
\end{align}
Hence
\begin{align}
8i \equiv -20 \pmod{4t}.
\end{align}
Since $0 \le 8i \le 8\cdot(t-5)/2 = 4t-20$,
\begin{align}
8i = 4t-20,
\end{align}
therefore
\begin{align}
i = \f{t-5}{2} = w-1.
\end{align}
Thus,
\begin{align}
k = 7(w-1)+10 = 7w+3.
\end{align}
\item If $k=4t-i$, then $i=0$ would give $k=4t$, which is excluded.
For $i\ge1$, we have
\begin{align}
y\cdot (4t-i)=7i+10 \equiv 4t-i+10 \pmod{4t}.
\end{align}
Hence
\begin{align}
8i \equiv 0 \pmod{4t}.
\end{align}
However,
\begin{align}
0 < 8i \le 8\cdot \f{t-5}{2} = 4t-20 < 4t,
\end{align}
a contradiction.
\end{itemize}
Thus, the equality $x^{k}y\cdot e=yx^{k}\cdot e$ can hold only if
\begin{align}
\label{eq:k4t}
k = 7w+3.
\end{align}
\HL

\noindent
(ii) Let $e=2$. By \eqref{eq:k4t}, it remains to consider
$k=7w+3=7(w-1)+10$.
Since $w-1=(t-5)/2$, \eqref{eq:ytdef2} gives
\begin{align}
yx^{k}\cdot e = y\cdot (k+2) =y\cdot(7(w-1)+12) =7(w-1)+8 = 7w+1.
\end{align}
On the other hand,
\begin{align}
x^{k}y\cdot e=x^{k} \cdot 5=k+5=7w+8.
\end{align}
Hence $x^{k}y\cdot e\ne yx^{k}\cdot e$.
Therefore, the equality
$x^{k}y\cdot e=yx^{k}\cdot e$ cannot hold for both $e=4t$ and $e=2$.

Thus, \eqref{eq:4txkytrivial} holds. Consequently, for the corresponding \dde\ $\msD$, we have
$\AD \cong \{ 1 \}$.
\end{proof}

\begin{rem}
When $t=3$, applying the same algorithm as in the case $t\ge5$, we obtain
\begin{align}
y = (1\ 3)(2\ 5)(4\ 6)(7\ 9)(8\ 11)(10\ 12).
\end{align}
This satisfies \ref{itm:b2a4-1} and \ref{itm:b2a4-2}.
However, it does not satisfy \ref{itm:b2a4-3}; in fact, $\AD\cong C_{2}$.
\end{rem}

\begin{prop}
\label{prop:class4b2a4}
If a uniform passport $[n, 2^{q}, 4^{p}]$ with $n = 2q = 4p$ has genus at least~$2$, then it admits a \dde with trivial automorphism group.
\end{prop}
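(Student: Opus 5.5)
The argument mirrors the proof of \propref{prop:class4b2a3}, with \propref{prop:c4b2a4} in place of \propref{prop:c4b2a3}. Since $n=2q=4p$, we write $n=4t$, $q=2t$, $p=t$ with $t\in\Zp$.

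By \eqref{eq:genus}, the genus is
\begin{align}
g=\f{4t-(t+2t+1)}{2}+1=\f{t+1}{2}.
\end{align}
Because $g$ is an integer, $t$ must be odd. The hypothesis $g\ge2$ then forces $t\ge3$. So the admissible passports are exactly those with $t$ odd and $t\ge3$, which is precisely the range covered by \propref{prop:c4b2a4}.

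Next we reduce to the situation of that proposition. The automorphism group is invariant under relabeling of the edges, so we may fix the $n$-cycle $x=(1\ 2\ \ldots\ 4t)$. We then take $y$ as in \propref{prop:c4b2a4}: the explicit pair given there when $t=3$, and the product \eqref{eq:ytdef2} when $t\ge5$.

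By parts \ref{itm:b2a4-1} and \ref{itm:b2a4-2} of \propref{prop:c4b2a4}, $x$, $y$, and $(xy)^{-1}$ have cycle types $(4t)$, $(2^{2t})$, and $(4^{t})$. The group $G=\gen{x,y}$ is transitive on $E$ because $x$ is an $n$-cycle. Hence $(x,y)$ determines a dessin $\msD$ with passport $[n,2^{q},4^{p}]$. By part \ref{itm:b2a4-3}, $\AD\cong\{1\}$.

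There is no real obstacle here: all the combinatorial work (checking the cycle types and that $x^{k}y\ne yx^{k}$ for $1\le k\le 4t-1$) has already been done in \propref{prop:c4b2a4}. The only point needing care is the parity argument that restricts us to odd $t\ge3$, so that the construction applies to every admissible passport.
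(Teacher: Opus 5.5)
Your proposal is correct and follows the paper's proof essentially verbatim: you write $n=4t$, $p=t$, $q=2t$, use the genus formula $g=(t+1)/2$ to force $t$ odd with $t\ge3$, fix the $n$-cycle $x$, and invoke \propref{prop:c4b2a4} for the explicit $y$ with $\AD\cong\{1\}$. The extra remarks on cycle types and transitivity are harmless additions to the same argument.
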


\begin{proof}
Since $n = 2q = 4p$, we may write $n = 4t$, $p = t$, $q = 2t$ for $t \in \Zp$. By \eqref{eq:genus}, the genus is
\begin{align}
g &= \f{4t-(t+2t+1)}{2}+1 = \f{t+1}{2} \ge 2.
\end{align}
Since $g$ is an integer, $t$ is odd, and since $g\ge2$, we have $t\ge3$.

Since the automorphism group is invariant under relabeling of the edges, we may fix an $n$-cycle $x$.
Then \propref{prop:c4b2a4} provides a permutation $y$ such that the pair
$(x,y)$ has monodromy group $G=\gen{x,y}$. The corresponding
dessin $\msD$ satisfies $\AD \cong \{ 1 \}$.
\end{proof}
\HL


\subsubsection{\texorpdfstring{The Subcase $b=2$, $a\ge 5$}{The Subcase b=2, a >= 5}}
\label{sec:class4b2age5}

\begin{prop}
\label{prop:c4b2age5}
If a uniform passport $[n, 2^{q}, a^{p}]$ with $n = pa = 2q$, $2 \le p < q$, and $a \ge 5$ has genus
at least~$2$, then it admits a \dde with trivial automorphism group.
\end{prop}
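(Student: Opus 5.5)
The plan is to prove $\vt{N(2,q,a)} > \vt{C(2,q)}$, using the exact formula \eqref{eq:Ng0b2} for $\vt{N}$. Here $n=2q=pa$ with $a\ge5$ and $2\le p<q$. The genus condition $g=(n-p-q+1)/2=(q-p+1)/2\ge2$ forces $q-p$ to be odd and at least $3$.

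\textbf{Upper bound for $\vt{C}$.} Since $b=2$, \eqref{eq:Cbqsum} together with \propref{prop:Cnl} gives two kinds of contributions.
\begin{itemize}
\item The prime $\ell=2$ contributes $\vt{C_{n/2}(2,q)}$. By \lemref{lem:C2q3q4q} this is at most $q!(2e/q)^{q/2}\exp(\sqrt{q/2})$.
\item Each odd prime $\ell\mid q$ (so $\ell\nmid b$) contributes $(2q/\ell)!\,\ell^{q/\ell}/(2^{q/\ell}(q/\ell)!)$. For $\ell\ge3$ this is much smaller than $q!(2e/q)^{q/2}$. I will bound the sum over these primes crudely, for example by $q$ times the $\ell=3$ term, using Stirling \eqref{eq:Stirling}.
\end{itemize}
The net result is $\vt{C}\le q!\,(2e/q)^{q/2}e^{\sqrt{q/2}}(1+o(1))$, with an explicit error factor.

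\textbf{Lower bound for $\vt{N}$.} Start from \eqref{eq:Ng0b2}, $\vt{N}=\f{q!}{2^{q-p}a^{p}p!}\sum_{(i_k)\vDash g}\prod_k\binom{a}{2i_k+1}$, and restrict the sum to balanced compositions as in \eqref{eq:Bapq}. This gives $\vt{N}\ge \f{q!}{2^{q-p}a^pp!}B(a,p,q)$. The estimate $B\ge \f{2\sqrt2}{3}a^pp\bigl(\f{a-1}{2a}\bigr)^{p/2}$ from the earlier analysis then yields
\begin{align}
\vt{N}\ge q!\,\f{2\sqrt2}{3}\,\f{2^{p}}{2^{q}(p-1)!}\Bigl(\f{a-1}{2a}\Bigr)^{p/2}.
\end{align}
Since $p=2q/a\le 2q/5$, the ratio to compare is
\begin{align}
\f{\vt{N}}{\vt{C}}\gtrsim \f{2^{p}}{(p-1)!}\Bigl(\f{a-1}{2a}\Bigr)^{p/2}\Bigl(\f{q}{8e}\Bigr)^{q/2}e^{-\sqrt{q/2}}.
\end{align}
Using $(p-1)!\le p^{p}\le(2q/5)^{2q/5}$, this behaves like $q^{q/2-2q/5}=q^{q/10}$ times an exponential in $q$. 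So it exceeds $1$ for all $q$ beyond an explicit threshold $q_0$, uniformly in $a\ge5$.

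\textbf{Handling small $p$ separately.} The worst case is $a=5$, $p=2q/5$. If I want a sharper bound when $p$ is small relative to $q$, I can use monotonicity of $h(p)$ in the form $h(p)\ge h(2q/5)$. I will make the threshold concrete by taking logarithms, bounding $\log\Gamma$ ratios with \lemref{lem:gamma} and Stirling, and showing that the derivative in $q$ of the log-ratio is positive beyond $q_0$ (via \lemref{lem:digamma}).

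\textbf{Finite exceptional set.} The remaining cases have $q<q_0$, with $p\mid 2q$, $a=2q/p\ge5$, and $q-p$ odd and at least $3$. For these I will compute $\vt{N}$ exactly from \eqref{eq:Ng0b2} and $\vt{C}$ exactly from \eqref{eq:calcC}, and check $\vt{N}>\vt{C}$. If some tiny case fails the check (plausibly $n=10$, $[10,2^5,5^2]$, which has $g=2$), I will give an explicit $y$ and verify $x^ky\ne yx^k$ for all $1\le k\le n-1$, using \corref{cor:ADtrivial}.

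\textbf{Main obstacle.} I expect the hardest step to be making the asymptotic comparison explicit with a reasonably small $q_0$. The factor $(2e/q)^{q/2}$ in $\vt{C}$ and the factor $2^{-q}/(p-1)!$ in $\vt{N}$ nearly cancel when $a=5$. The constants therefore need careful handling, most likely by splitting into the cases $a=5,6$ and $a\ge7$, so that the finite verification stays manageable.
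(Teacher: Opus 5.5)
Your outline matches the paper's. You take $\vt{N(2,q,a)}$ from \eqref{eq:Ng0b2} restricted to balanced compositions, and bound $\vt{C(2,q)}$ by the $\ell=2$ estimate of \lemref{lem:C2q3q4q} plus a crude Stirling bound over odd primes. You then do a finite check and use an explicit $y$ for $[10,2^{5},5^{2}]$, where indeed $\vt{N}=33<85=\vt{C}$.

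The gap is the lower bound you use for $B(a,p,q)$. The generic estimate $B\ge\frac{2\sqrt2}{3}a^{p}p\bigl(\frac{a-1}{2a}\bigr)^{p/2}$ from \secref{sec:Nlower} essentially replaces each $\binom{a}{2u+1}$, $\binom{a}{2u+3}$ by $\binom{a}{1}=a$. But for $b=2$ one has $g/p=\frac{a-2}{4}+\frac{1}{2p}$, so $2u+1\approx a/2$ and these factors are near-central binomials of size about $2^{a}/\sqrt{a}$. You therefore lose a factor of roughly $(2^{a}/a^{3/2})^{p}$, which is exponential in $q=pa/2$. This happens exactly in the regime where, as you observe, the main terms nearly cancel.

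Quantitatively, take $a=5$, $p=2q/5$. Even using Stirling for $(p-1)!$, your ratio is $\exp\bigl(\frac{q}{10}\log q-0.68\,q-\sqrt{q/2}+O(\log q)\bigr)$. It is still below $1$ at $q=1005$ and crosses $1$ only around $q\approx1100$. With the step $(p-1)!\le p^{p}\le(2q/5)^{2q/5}$ that you actually wrote, the linear coefficient exceeds $1$, and the crossing moves to $q$ in the tens of thousands.

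So your ``finite exceptional set'' is every admissible passport with $n$ in the thousands or beyond. Nothing in your chain is false, but that is not a verification you can realistically carry out or present. Splitting off $a=5,6$ does not help: the loss sits in $B$ for every $a$, and $a=5$ is exactly where the threshold is largest. Moreover, $a=6$ (and in general any $a\equiv2\pmod 4$) never occurs, since then $q-p=p(a-2)/2$ is even and $g$ is not an integer.

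The missing idea is to evaluate $B$ sharply by determining $u=\lfloor g/p\rfloor$ and $v=g-pu$ exactly in each residue class of $a$ modulo $4$:
\begin{itemize}
\item $a=4m$ ($p$ odd): $u=m-1$ and $v=\frac{p+1}{2}$.
\item $a=4m+1$ ($p\equiv2\pmod4$): $u=m$, $v=0$ if $p=2$; and $u=m-1$, $v=\frac{3p+2}{4}$ if $p\ge6$.
\item $a=4m+3$ ($p\equiv2\pmod4$): $u=m$ and $v=\frac{p+2}{4}$.
\end{itemize}
Then $B$ becomes an explicit product of $\binom{a}{a/2\pm1}$, $\binom{a}{(a\pm1)/2}$ and $\binom{a}{(a\pm3)/2}$. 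The resulting explicit lower bound $R_{0}(a,p)$ for $\vt{N}/\vt{C}$ can be shown, by ratio computations, to increase in $a$ (in steps of $4$) and in $p$ along the admissible progressions. This reduces everything to $(a,p)\in\{(5,2),(9,2),(5,6),(5,10),(7,2)\}$. Exact values settle all of these except $[10,2^{5},5^{2}]$, which needs the explicit $y$ you anticipated.
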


\begin{proof}
We prove the statement by showing that
$\vt{N(2,q,a)}>\vt{C(2,q)}$ except for a few exceptional pairs $(a,p)$.
The exceptional pairs are handled separately, either by computing the exact values of
$\vt{N}$ and $\vt{C}$ or by exhibiting a pair $(x,y)$ with trivial automorphism group.

Since $n=2q=pa\ge2\cdot5=10$, we have $q\ge5$. By \eqref{eq:genus-uc}, the genus is
\begin{align}
g &= \f{2q-(p+q)+1}{2} = \f{q-p+1}{2}.
\end{align}

Since $2 \mid n = 2q$, by \eqref{eq:Cbqsum} and \lemref{lem:C2q3q4q},
\begin{align}
\vt{C(2, q)} &\le \sum_{\ell\colon \text{prime},\, \ell\, \mid\, n} \vt{C_{\f{n}{\ell}}(2, q)}
=\ \vt{C_{\f{n}{2}}(2, q)} + \sum_{\substack{\ell\colon \text{odd prime}\\ \ell\, \mid\, n}} \vt{C_{\f{n}{\ell}}(2, q)} \\
&\le q!\left(\f{2e}{q}\right)^{\f{q}{2}}\exp\left(\sqrt{\f{q}{2}}\right) + \sum_{\substack{\ell\colon \text{odd prime}\\ \ell\, \mid\, n}} \vt{C_{\f{n}{\ell}}(2, q)}.
\end{align}
For an odd prime $\ell\mid n=2q$, we have $\ell\mid q$.
Let $m=q/\ell \in \Zp$. Since $\ell\nmid2$, \propref{prop:Cnl} gives
\begin{align}
\vt{C_{\f{n}{\ell}}(2, q)} &= \left(\f{2q}{\ell}\right)!\f{\ell^{\f{q}{\ell}}}{2^{\f{q}{\ell}}\left(\f{q}{\ell}\right)!}
= \f{\ell^{m}(2m)!}{2^{m}m!}.
\end{align}
By the bounds from Stirling's formula \eqref{eq:Stirling}, we obtain
\begin{align}
\vt{C_{\f{n}{\ell}}(2, q)} &\le \f{\ell^{m}\sqrt{4 \pi m}\left(\f{2m}{e}\right)^{2m}}{2^{m}\sqrt{2 \pi m}\left(\f{m}{e}\right)^{m}}
\exp(\f{1}{24m}-\f{1}{12m+1}) \\
&= \sqrt{2}\left(\f{2\ell m}{e}\right)^{m}\exp(\f{1}{24m}-\f{1}{12m+1}) \le \sqrt{2}\left(\f{2\ell m}{e}\right)^{m}
= \sqrt{2}\left(\f{2q}{e}\right)^{\f{q}{\ell}}.
\end{align}
Since $2q/e\ge10/e>1$, the right-hand side is decreasing as $\ell$ increases.

Let $\ell_{1}, \ldots, \ell_{s}$ be the distinct odd prime divisors of $n = 2q$. Since each $\ell_{i}$ divides $q$,
\begin{align}
\ell_{1}\cdots\ell_{s} \le q.
\end{align}
Since $\ell_{i} \ge 3$ for each $i$,
\begin{align}
\ell_{1}\cdots\ell_{s} \ge 3^{s}.
\end{align}
Hence $3^{s} \le q$ and
\begin{align}
s \le \log_{3}q = \f{\log q}{\log 3}.
\end{align}
Therefore,
\begin{align}
\sum_{\substack{\ell\colon \text{odd prime}\\ \ell\, \mid\, n}} \vt{C_{\f{n}{\ell}}(2, q)}
&\le \sum_{\substack{\ell\colon \text{odd prime}\\ \ell\, \mid\, n}} \sqrt{2}\left(\f{2q}{e}\right)^{\f{q}{\ell}}
\le \sum_{\substack{\ell\colon \text{odd prime}\\ \ell\, \mid\, n}} \sqrt{2}\left(\f{2q}{e}\right)^{\f{q}{3}}
\le  \f{\sqrt{2}\log q}{\log 3}\left(\f{2q}{e}\right)^{\f{q}{3}}.
\end{align}
\needspace{\baselineskip} 
Thus,
\begin{align}
\vt{C(2, q)} &\le V_{1}(q) + V_{2}(q), \\
V_{1}(q) &\ceq q!\left(\f{2e}{q}\right)^{\f{q}{2}}\exp\left(\sqrt{\f{q}{2}}\right), \\
V_{2}(q) &\ceq \f{\sqrt{2}\log q}{\log 3}\left(\f{2q}{e}\right)^{\f{q}{3}}.
\end{align}

Let
\begin{align}
R_{1}(q) \ceq \f{V_{1}(q)}{V_{2}(q)}.
\end{align}
Then, using the monotonicity facts stated in \secref{sec:strategy},
\begin{align}
\f{R_{1}(q+1)}{R_{1}(q)} &= \f{V_{1}(q+1)}{V_{1}(q)}\cdot\f{V_{2}(q)}{V_{2}(q+1)} \\
&= (q+1)!\left(\f{2e}{q+1}\right)^{\f{q+1}{2}}\exp\left(\sqrt{\f{q+1}{2}}\right)
\f{1}{q!}\left(\f{2e}{q}\right)^{-\f{q}{2}}\exp\left(-\sqrt{\f{q}{2}}\right) \\
&\sq\cdot\f{\sqrt{2}\log q}{\log 3}\left(\f{2q}{e}\right)^{\f{q}{3}}
\f{\log 3}{\sqrt{2}\log (q+1)}\left(\f{2q+2}{e}\right)^{-\f{q+1}{3}} \\
&= (q+1)\left(1+\f{1}{q}\right)^{-\f{q}{2}}\left(\f{2e}{q+1}\right)^{\f{1}{2}}\f{\log q}{\log(q+1)}\left(1+\f{1}{q}\right)^{-\f{q}{3}}\left(\f{e}{2q+2}\right)^{\f{1}{3}} \\
&\sq\cdot\exp(\f{\sqrt{q+1}-\sqrt{q}}{\sqrt{2}}) \\
&= 2^{\f{1}{6}}e^{\f{5}{6}}(q+1)^{\f{1}{6}}\left(1+\f{1}{q}\right)^{-\f{5q}{6}}\f{\log q}{\log(q+1)}\exp(\f{\sqrt{q+1}-\sqrt{q}}{\sqrt{2}}) \\
&\ge 2^{\f{1}{6}}e^{\f{5}{6}}(5+1)^{\f{1}{6}}e^{-\f{5}{6}}\f{\log 5}{\log(5+1)}\cdot 1 = 1.359\ldots > 1.
\end{align}
Therefore, $R_1(q)$ is strictly increasing for integers $q\ge5$. Hence
\begin{align}
R_{1}(q) &\ge R_{1}(5) = 5!\left(\f{2e}{5}\right)^{\f{5}{2}}\exp\left(\sqrt{\f{5}{2}}\right)
\f{\log 3}{\sqrt{2}\log 5}\left(\f{2\cdot 5}{e}\right)^{-\f{5}{3}}
= 39.587\ldots, \\
\vt{C(2,q)}
&\le V_1(q)+V_2(q)
=\left(1+\f{1}{R_{1}(q)}\right)V_1(q)
\le \left(1+\f{1}{R_{1}(5)}\right)V_1(q) \\
&= \kappa_{0}V_1(q), \q \kappa_0 \ceq 1+\f{1}{R_1(5)} = 1.025\ldots.
\end{align}
Since $q = n/2 = pa/2$,
\begin{align}
\vt{C(2, q)} &\le \kappa_{0}q!\left(\f{2e}{q}\right)^{\f{q}{2}}\exp\left(\sqrt{\f{q}{2}}\right)
\label{eq:b2age5C}
= \kappa_{0}\left(\f{pa}{2}\right)!\left(\f{4e}{pa}\right)^{\f{pa}{4}}\exp\left(\f{\sqrt{pa}}{2}\right).
\end{align}

On the other hand, we restrict the sum in \eqref{eq:Ng0b2} to the compositions whose parts are as equal as possible. Let
\begin{align}
u &= \fl{\f{g}{p}},\q v = g - pu.
\end{align}
As in \eqref{eq:Bapq}, define
\begin{align}
\label{eq:b2B}
B(a, p, q) &\ceq 
\begin{dcases}
\binom{p}{v}\binom{a}{2u+1}^{p-v}\binom{a}{2u+3}^{v} & (v > 0), \\
\binom{a}{2u+1}^{p} & (v = 0). \\
\end{dcases}
\end{align}
Then
\begin{align}
\vt{N(2, q, a)} &\ge \f{2(2q-q)!}{2^{2q-p-q+1}a^{p}p!}B(a, p, q)
= \f{q!}{2^{q-p}a^{p}p!}B(a, p, q).
\end{align}
Combining this with \eqref{eq:b2age5C} and using $n=2q=pa$, we obtain
\begin{align}
\f{\vt{N(2, q, a)}}{\vt{C(2, q)} } &\ge \f{q!}{2^{q-p}a^{p}p!}B(a, p, q)\left(\kappa_{0}\left(\f{pa}{2}\right)!\left(\f{4e}{pa}\right)^{\f{pa}{4}}\exp\left(\sqrt{\f{pa}{4}}\right)\right)^{-1} \\
\label{eq:N2C2}
&= \f{\left(\f{pa}{4e}\right)^{\f{pa}{4}}\exp(-\f{\sqrt{pa}}{2})}{\kappa_{0}\cdot 2^{\left(\f{a}{2}-1\right)p}a^{p}p!}B(a, p, q).
\end{align}

Let $R_{0}(a, p)$ denote the right-hand side.
Here, we give a more refined estimate of $B(a,p,q)$ than the one obtained in \secref{sec:Nlower}.
We divide the argument into cases according to the residue class of $a$ modulo~$4$
and estimate $R_{0}$ in each case.

\tabref{tab:b2age5} summarizes the admissible values of $p$, the corresponding
values of $u$ and $v$, and the method used to estimate $R_{0}$ for each
residue class of $a$ modulo~$4$.
\HL

\begin{table}[htbp]
\centering
\small
\begin{tabular}{|c|c|c|c|l|}
\hline
$a \!\!\pmod{4}$ & $p$ & $u$ & $v$ & \multicolumn{1}{c|}{Method} \\
\hline
0 & odd, $p \ge 3$ & $a/4-1$ & $(p+1)/2$ &
\makecell[l]{Monotonicity in $a$ and, for $a=8$, in $p$; \\ reduce to $(8,3)$.} \\
\hline
\multirow[c]{2}{*}[-1.6ex]{1} & $p=2$ & $(a-1)/4$ & $0$ &
\makecell[l]{Monotonicity in $a$; check $a=5, 9$\\ separately.} \\
\cline{2-5}
& \makecell[c]{$p \equiv 2 \!\pmod{4}$, \\ $p \ge 6$} & $(a-5)/4$ & $(3p+2)/4$ &
\makecell[l]{Monotonicity in $a$ and, for $a=5$, in $p$;\\
check $(5,6)$, \!$(5,10)$, \!$(5,14)$, \!$(9,6)$, \!$(9,10)$.} \\
\hline
2 & -- & -- & -- & Does not occur. \\
\hline
3 & \makecell[c]{$p \equiv 2 \!\pmod{4}$, \\ $p\ge 2$} & $(a-3)/4$ & $(p+2)/4$ &
\makecell[l]{Monotonicity in $a$ and, for $a=7$, in $p$;\\
check $(7,2)$, $(7,6)$, $(11,2)$.} \\
\hline
\end{tabular}
\HL
\caption{Case analysis according to $a \pmod{4}$}\label{tab:b2age5}
\end{table}

\noindent
(i) The case $a \equiv 0 \pmod{4}$

Since $a \ge 5$, we have $a \ge 8$. Let $a = 4m$. Then
\begin{align}
m &\ge 2, \q q = \f{pa}{2} = 2pm, \\*
g &= \f{2pm-p+1}{2} = pm - \f{p-1}{2},\q p\text{ is odd},\ p\ge 3,\\*
q &= 2pm \text{ is even},\ q \ge 12, \\*
u &= \fl{\f{g}{p}} = \fl{m - \f{p-1}{2p}} = m - 1 = \f{a}{4}-1, \\*
v &= g-pu = pm - \f{p-1}{2} - p(m-1) = \f{p+1}{2} > 0.
\end{align}
Thus, by \eqref{eq:b2B} and \eqref{eq:N2C2},
\begin{align}
R_{0}(a, p) &= \f{\left(\f{pa}{4e}\right)^{\f{pa}{4}}\exp(-\f{\sqrt{pa}}{2})}{\kappa_{0}\cdot 2^{\left(\f{a}{2}-1\right)p}a^{p}p!}
\binom{p}{\f{p+1}{2}}\binom{a}{\f{a}{2}-1}^{\f{p-1}{2}}\binom{a}{\f{a}{2}+1}^{\f{p+1}{2}} \\
&= \f{\left(\f{pa}{4e}\right)^{\f{pa}{4}}\exp(-\f{\sqrt{pa}}{2})}{\kappa_{0}\cdot 2^{\left(\f{a}{2}-1\right)p}a^{p}p!}
\cdot\f{p!}{\left(\f{p+1}{2}\right)!\left(\f{p-1}{2}\right)!} \\
&\sq\cdot\left(\f{a!}{\left(\f{a}{2}-1\right)!\left(\f{a}{2}+1\right)!}\right)^{\f{p-1}{2}}
\left(\f{a!}{\left(\f{a}{2}+1\right)!\left(\f{a}{2}-1\right)!}\right)^{\f{p+1}{2}} \\
&= \f{\left(\f{pa}{4e}\right)^{\f{pa}{4}}\exp(-\f{\sqrt{pa}}{2})}{\kappa_{0}\cdot 2^{\left(\f{a}{2}-1\right)p}\left(\f{p+1}{2}\right)!\left(\f{p-1}{2}\right)!}
\left(\f{(a-1)!}{\left(\f{a}{2}+1\right)!\left(\f{a}{2}-1\right)!}\right)^{p}.
\end{align}

To prove that $R_{0}(a,p)>1$, we first show the following monotonicity properties:
\begin{itemize}
\item For each odd $p \ge 3$, $R_{0}(a,p)$ is strictly increasing as $a$ ranges over
integers $a \ge 8$ with $a \equiv 0 \pmod{4}$.
\item $R_{0}(8,p)$ is strictly increasing as $p$ ranges over odd integers $p \ge 3$.
\end{itemize}

Since successive values of $a$ in this case differ by $4$, the ratio of successive values of $R_{0}$ with respect to $a$ is
\begin{align}
R_{0,a}(a, p) &\ceq \f{R_{0}(a+4, p)}{R_{0}(a, p)} \\
&= \f{\left(\f{p(a+4)}{4e}\right)^{\f{p(a+4)}{4}}\exp(-\f{\sqrt{p(a+4)}}{2})}{\kappa_{0}\cdot 2^{\left(\f{a}{2}+1\right)p}\left(\f{p+1}{2}\right)!\left(\f{p-1}{2}\right)!}
\left(\f{(a+3)!}{\left(\f{a}{2}+3\right)!\left(\f{a}{2}+1\right)!}\right)^{p} \\
&\sq\cdot\f{\kappa_{0}\cdot 2^{\left(\f{a}{2}-1\right)p}\left(\f{p+1}{2}\right)!\left(\f{p-1}{2}\right)!}{\left(\f{pa}{4e}\right)^{\f{pa}{4}}\exp(-\f{\sqrt{pa}}{2})}
\left(\f{\left(\f{a}{2}+1\right)!\left(\f{a}{2}-1\right)!}{(a-1)!}\right)^{p} \\
&= \left(\f{\left(1+\f{4}{a}\right)^{\f{a}{4}}p(a+1)(a+3)}{e(a+6)}\right)^{p}\exp(-\f{\sqrt{p}}{2}(\sqrt{a+4}-\sqrt{a})).
\end{align}
Since
\begin{align}
\dv{a}\f{(a+1)(a+3)}{a+6} = \f{a^{2}+12a+21}{(a+6)^{2}} > 0 \q (a \ge 8),
\end{align}
$(a+1)(a+3)/(a+6)$ is strictly increasing in $a$ for $a \ge 8$.
Together with the monotonicity facts stated in \secref{sec:strategy}, this gives
\begin{align}
R_{0,a}(a, p) &\ge R_{0,a}(8, p) = \left(\f{\left(1+\f{4}{8}\right)^{\f{8}{4}}p(8+1)(8+3)}{e(8+6)}\right)^{p}\exp(-\f{\sqrt{p}}{2}(\sqrt{8+4}-\sqrt{8})) \\
&= \left(\f{891p}{56e\exp(\f{\sqrt{3}-\sqrt{2}}{\sqrt{p}})}\right)^{p}.
\end{align}
Since
\begin{align}
\f{p}{\exp\left(\f{\sqrt{3}-\sqrt{2}}{\sqrt{p}}\right)}
\end{align}
is strictly increasing in $p$ for $p>0$ and
\begin{align}
\f{891\cdot3}{56e\exp\left(\f{\sqrt{3}-\sqrt{2}}{\sqrt{3}}\right)} = 14.615\ldots >1,
\end{align}
we have, for $p\ge3$,
\begin{align}
R_{0,a}(a, p) &\ge \left(\f{891\cdot 3}{56e\exp(\f{\sqrt{3}-\sqrt{2}}{\sqrt{3}})}\right)^{3} = 3122.236\ldots > 1.
\end{align}
Hence, for each fixed odd $p \ge 3$, $R_0(a,p)$ is strictly increasing as $a$ ranges over
integers $a \ge 8$ congruent to $0$ modulo~$4$.

Since $p$ is odd, successive admissible values of $p$ differ by $2$. Hence the ratio of successive values of
$R_{0}(8, p)$ with respect to $p$ is
\begin{align}
\f{R_{0}(8, p+2)}{R_{0}(8, p)} &= \f{\left(\f{2(p+2)}{e}\right)^{2(p+2)}\exp(-\sqrt{2(p+2)})}{\kappa_{0}\cdot 2^{3(p+2)}\left(\f{p+3}{2}\right)!\left(\f{p+1}{2}\right)!}
\left(\f{7!}{5!\cdot 3!}\right)^{p+2} \\
&\sq\cdot\f{\kappa_{0}\cdot 2^{3p}\left(\f{p+1}{2}\right)!\left(\f{p-1}{2}\right)!}{\left(\f{2p}{e}\right)^{2p}\exp(-\sqrt{2p})}
\left(\f{7!}{5!\cdot 3!}\right)^{-p} \\
&= \f{49\left(1+\f{2}{p}\right)^{2p}(p+2)^{4}}{e^{4}(p+1)(p+3)}\exp(-\sqrt{2}(\sqrt{p+2}-\sqrt{p})).
\end{align}
Using $(p+1)(p+3) = p^{2}+4p+3 < p^{2}+4p+4 = (p+2)^{2}$ together with the monotonicity facts stated in
\secref{sec:strategy}, we obtain
\begin{align}
\f{R_{0}(8, p+2)}{R_{0}(8, p)} &> \f{49\left(1+\f{2}{p}\right)^{2p}(p+2)^{2}}{e^{4}}\exp(-\sqrt{2}(\sqrt{p+2}-\sqrt{p})) \\
&\ge \f{49\left(1+\f{2}{3}\right)^{2\cdot 3}(3+2)^{2}}{e^{4}}\exp(-\sqrt{2}(\sqrt{3+2}-\sqrt{3})) = 235.771\ldots > 1.
\end{align}
Therefore, $R_{0}(8, p)$ is strictly increasing as $p$ ranges over its admissible values.

Thus, by the monotonicity properties established above and a direct computation,
\begin{align}
\f{\vt{N(2, q, a)}}{\vt{C(2, q)}} 
&\ge R_{0}(a, p) \ge R_{0}(8, p) \ge R_{0}(8, 3)
= 3.262\ldots > 1.
\end{align}
\HL

\noindent
(ii) The case $a \equiv 1 \pmod{4}$

We have $a \ge 5$. Let $a = 4m+1$. Then
\begin{align}
m &\ge 1,\q q = \f{pa}{2} = \f{p(4m+1)}{2} = 2pm+\f{p}{2}, \\
g &= \f{2pm+\f{p}{2}-p+1}{2} = pm - \f{p-2}{4},\q p \equiv 2 \npmod{4},\ p \ge 2, \\
q &= 2pm+\f{p}{2}\text{ is odd},\ q \ge 5, \\
u &= \fl{\f{g}{p}} = \fl{m - \f{p-2}{4p}} =
\begin{dcases}
m = \f{a-1}{4} & (p = 2), \\
m-1 = \f{a-5}{4} & (p \ge 6),
\end{dcases} \\
v &= g-pu = 
\begin{dcases}
2m - 2m = 0 & (p=2), \\
pm-\f{p-2}{4} - p(m-1) = \f{3p+2}{4} > 0 & (p \ge 6).
\end{dcases}
\end{align}
\HL

\noindent
(ii-1) The subcase $p=2$

By \eqref{eq:b2B} and \eqref{eq:N2C2},
\begin{align}
R_{0}(a, 2) &= \f{\left(\f{2a}{4e}\right)^{\f{2a}{4}}\exp(-\f{\sqrt{2a}}{2})}{\kappa_{0}\cdot 2^{\left(\f{a}{2}-1\right)\cdot 2}a^{2}2!}
\binom{a}{\f{a+1}{2}}^{2}
= \f{\left(\f{a}{2e}\right)^{\f{a}{2}}\exp(-\sqrt{\f{a}{2}})}{\kappa_{0}\cdot 2^{a-1}}\left(\f{(a-1)!}{\left(\f{a+1}{2}\right)!\left(\f{a-1}{2}\right)!}\right)^{2}.
\end{align}
Since successive admissible values of $a$ differ by $4$, the ratio of successive values of
$R_{0}(a,2)$ with respect to $a$ is
\begin{align}
&\f{R_{0}(a+4, 2)}{R_{0}(a,2)} \\
&\sq = \f{\left(\f{a+4}{2e}\right)^{\f{a+4}{2}}\exp(-\sqrt{\f{a+4}{2}})}{\kappa_{0}\cdot 2^{a+3}}\left(\f{(a+3)!}{\left(\f{a+5}{2}\right)!\left(\f{a+3}{2}\right)!}\right)^{2}
\f{\kappa_{0}\cdot 2^{a-1}}{\left(\f{a}{2e}\right)^{\f{a}{2}}\exp(-\sqrt{\f{a}{2}})}\left(\f{\left(\f{a+1}{2}\right)!\left(\f{a-1}{2}\right)!}{(a-1)!}\right)^{2} \\
&\sq= \f{\left(1+\f{4}{a}\right)^{\f{a}{2}}\left(\f{a+4}{2}\right)^{2}}{2^{4}e^{2}}\left(\f{a(a+1)(a+2)(a+3)}{\f{a+3}{2}\f{a+5}{2}\f{a+1}{2}\f{a+3}{2}}\right)^{2}\exp(-\f{\sqrt{a+4}-\sqrt{a}}{\sqrt{2}}) \\
&\sq= \f{4\left(1+\f{4}{a}\right)^{\f{a}{2}}}{e^{2}}\left(\f{a(a+2)(a+4)}{(a+3)(a+5)}\right)^{2}\exp(-\f{\sqrt{a+4}-\sqrt{a}}{\sqrt{2}}).
\end{align}
Let
\begin{align}
h(a) &\ceq \f{a(a+2)(a+4)}{(a+3)(a+5)}.
\end{align}
Then
\begin{align}
\dv{a}\log h(a) &= \f{1}{a} + \f{1}{a+2} + \f{1}{a+4} - \f{1}{a+3} - \f{1}{a+5} \\
\label{eq:4m1inc2}
&= \f{1}{a} + \f{1}{(a+2)(a+3)} + \f{1}{(a+4)(a+5)} > 0.
\end{align}
Hence $h(a)$ is strictly increasing in $a$. Therefore,
\begin{align}
\f{R_{0}(a+4, 2)}{R_{0}(a,2)} &\ge \f{R_{0}(9, 2)}{R_{0}(5,2)} = \f{4\left(1+\f{4}{5}\right)^{\f{5}{2}}}{e^{2}}\left(\f{5(5+2)(5+4)}{(5+3)(5+5)}\right)^{2}\exp(-\f{\sqrt{5+4}-\sqrt{5}}{\sqrt{2}}) \\
&= 21.256\ldots > 1.
\end{align}
Hence $R_{0}(a, 2)$ is strictly increasing in $a$. A direct computation gives
\begin{align}
&R_{0}(5, 2) < 1,\q R_{0}(9,2) < 1, \\
&R_{0}(13, 2) > 1.
\end{align}
Therefore, in this subcase, $\vt{N}>\vt{C}$ for $a\ge13$.

For the passports corresponding to $(a, p) = (5,2)$ and $(9,2)$, we obtain the following results:
\begin{itemize}
\item $[10, 2^{5}, 5^{2}]$ \\
We have $\vt{N} = 33 < 85 = \vt{C}$. However, for
\begin{align}
x = (1\ 2\ 3\ 4\ 5\ 6\ 7\ 8\ 9\ 10),\q y=(1\ 3)(2\ 6)(4\ 8)(5\ 10)(7\ 9),
\end{align}
$(xy)^{-1}$ has cycle type $(5^{2})$ and $\AD \cong \{1\}$.
\item $[18, 2^{9}, 9^{2}]$ \\
We have $\vt{N} = 383985 > 27171 = \vt{C}$.
\end{itemize}
\HL

\noindent
(ii-2) The subcase $p \ge 6$

By \eqref{eq:b2B} and \eqref{eq:N2C2},
\begin{align}
R_{0}(a, p) &= \f{\left(\f{pa}{4e}\right)^{\f{pa}{4}}\exp(-\f{\sqrt{pa}}{2})}{\kappa_{0}\cdot 2^{\left(\f{a}{2}-1\right)p}a^{p}p!}\binom{p}{\f{3p+2}{4}}\binom{a}{\f{a-3}{2}}^{\f{p-2}{4}}\binom{a}{\f{a+1}{2}}^{\f{3p+2}{4}} \\
&= \f{\left(\f{pa}{4e}\right)^{\f{pa}{4}}\exp(-\f{\sqrt{pa}}{2})}{\kappa_{0}\cdot 2^{\left(\f{a}{2}-1\right)p}a^{p}p!}\cdot
\f{p!}{\left(\f{3p+2}{4}\right)!\left(\f{p-2}{4}\right)!}\left(\f{a!}{\left(\f{a-3}{2}\right)!\left(\f{a+3}{2}\right)!}\right)^{\f{p-2}{4}}\left(\f{a!}{\left(\f{a+1}{2}\right)!\left(\f{a-1}{2}\right)!}\right)^{\f{3p+2}{4}} \\
&= \f{((a-1)!)^{p}\left(\f{pa}{4e}\right)^{\f{pa}{4}}\exp(-\f{\sqrt{pa}}{2})}{\kappa_{0}\cdot 2^{\left(\f{a}{2}-1\right)p}\left(\f{3p+2}{4}\right)!\left(\f{p-2}{4}\right)!\left(\left(\f{a-3}{2}\right)!\left(\f{a+3}{2}\right)!\right)^{\f{p-2}{4}}\left(\left(\f{a+1}{2}\right)!\left(\f{a-1}{2}\right)!\right)^{\f{3p+2}{4}}}.
\end{align}

To reduce the argument to finitely many cases, we first establish the following monotonicity properties:
\begin{itemize}
\item For each $p \ge 6$ with $p \equiv 2 \pmod{4}$, $R_{0}(a,p)$ is strictly increasing as $a$ ranges over
integers $a \ge 5$ with $a \equiv 1 \pmod{4}$.
\item $R_{0}(5,p)$ is strictly increasing as $p$ ranges over integers $p \ge 6$ with
$p \equiv 2 \pmod{4}$.
\end{itemize}

Since successive admissible values of $a$ differ by $4$, the ratio of successive values of
$R_0(a,p)$ with respect to $a$ is
\begin{align}
R_{0,a}(a,p) &\ceq \f{R_{0}(a+4, p)}{R_{0}(a, p)} \\
&= \f{((a+3)!)^{p}\left(\f{p(a+4)}{4e}\right)^{\f{p(a+4)}{4}}\exp(-\f{\sqrt{p(a+4)}}{2})}{\kappa_{0}\cdot 2^{\left(\f{a}{2}+1\right)p}\left(\f{3p+2}{4}\right)!\left(\f{p-2}{4}\right)!\left(\left(\f{a+1}{2}\right)!\left(\f{a+7}{2}\right)!\right)^{\f{p-2}{4}}\left(\left(\f{a+5}{2}\right)!\left(\f{a+3}{2}\right)!\right)^{\f{3p+2}{4}}} \\
&\sq\cdot\f{\kappa_{0}\cdot 2^{\left(\f{a}{2}-1\right)p}\left(\f{3p+2}{4}\right)!\left(\f{p-2}{4}\right)!\left(\left(\f{a-3}{2}\right)!\left(\f{a+3}{2}\right)!\right)^{\f{p-2}{4}}\left(\left(\f{a+1}{2}\right)!\left(\f{a-1}{2}\right)!\right)^{\f{3p+2}{4}}}{((a-1)!)^{p}\left(\f{pa}{4e}\right)^{\f{pa}{4}}\exp(-\f{\sqrt{pa}}{2})} \\
&= \f{(a(a+1)(a+2)(a+3))^{p}\left(1+\f{4}{a}\right)^{\f{pa}{4}}\left(\f{p(a+4)}{4e}\right)^{p}\exp(-\f{\sqrt{p}}{2}(\sqrt{a+4}-\sqrt{a}))}
{2^{2p}\left(\f{a-1}{2}\f{a+1}{2}\f{a+5}{2}\f{a+7}{2}\right)^{\f{p-2}{4}}\left(\f{a+3}{2}\f{a+5}{2}\f{a+1}{2}\f{a+3}{2}\right)^{\f{3p+2}{4}}} \\
&= \left(\f{ap(a+2)(a+3)(a+4)\left(1+\f{4}{a}\right)^{\f{a}{4}}}{e(a+5)}\right)^{p}
\f{\exp(-\f{\sqrt{p}}{2}(\sqrt{a+4}-\sqrt{a}))}{((a-1)(a+7))^{\f{p-2}{4}}(a+3)^{\f{3p+2}{2}}}.
\end{align}
Since $(a-1)(a+7) = a^{2}+6a-7 < a^{2}+6a+9 = (a+3)^{2}$,
\begin{align}
R_{0,a}(a,p) &\ge \left(\f{pa(a+2)(a+3)(a+4)\left(1+\f{4}{a}\right)^{\f{a}{4}}}{e(a+5)}\right)^{p}
\f{\exp(-\f{\sqrt{p}}{2}(\sqrt{a+4}-\sqrt{a}))}{(a+3)^{\f{p-2}{2}}(a+3)^{\f{3p+2}{2}}} \\
\label{eq:La4aii}
&= \left(\f{pa(a+2)(a+4)\left(1+\f{4}{a}\right)^{\f{a}{4}}}{e(a+3)(a+5)}\right)^{p}
\exp(-\f{\sqrt{p}}{2}(\sqrt{a+4}-\sqrt{a})).
\end{align}
Let $f(a,p)$ denote the right-hand side of \eqref{eq:La4aii}.
By \eqref{eq:4m1inc2} and the monotonicity facts stated in \secref{sec:strategy},
$f(a,p)$ is strictly increasing in $a$.
Hence
\needspace{\baselineskip} 
\begin{align}
R_{0,a}(a,p) &\ge f(a, p) \ge f(5, p) \\
&= \left(\f{5p(5+2)(5+4)\left(1+\f{4}{5}\right)^{\f{5}{4}}}{e(5+3)(5+5)}\right)^{p}
\exp(-\f{\sqrt{p}}{2}(\sqrt{5+4}-\sqrt{5})) \\
&= \left(\f{63p}{16e}\left(\f{9}{5}\right)^{\f{5}{4}}\right)^{p}\exp(-\f{\sqrt{p}}{2}(3-\sqrt{5})).
\end{align}

Then,
\begin{align}
\log f(5, p) &= p\left(\log p+\log\f{63}{16e}+\f{5}{4}\log\f{9}{5}\right)-\f{\sqrt{p}}{2}(3-\sqrt{5}),
\end{align}
and for $p \ge 6$,
\begin{align}
\dv{p}\log f(5, p) &= \log p+\log\f{63}{16e}+\f{5}{4}\log\f{9}{5}+1-\f{3-\sqrt{5}}{4\sqrt{p}} \\
&\ge \log 6+\log\f{63}{16e}+\f{5}{4}\log\f{9}{5}+1-\f{3-\sqrt{5}}{4\sqrt{6}} \\
&=3.819\ldots > 0.
\end{align}
Hence $f(5, p)$ is strictly increasing for $p \ge 6$. Therefore,
\begin{align}
R_{0,a}(a,p) &\ge f(5, p) \ge f(5, 6) \\
&= \left(\f{63 \cdot 6}{16e}\left(\f{9}{5}\right)^{\f{5}{4}}\right)^{6}\exp\left(-\f{\sqrt{6}}{2}(3-\sqrt{5})\right) \\
&=13889045.590\ldots>1.
\end{align}
Thus, for each admissible $p\ge6$, $R_0(a,p)$ is strictly increasing as $a$ ranges over
integers $a\ge5$ with $a\equiv1\pmod{4}$.

Since $p\equiv2\pmod{4}$, successive admissible values of $p$ differ by $4$, and the ratio of
successive values of $R_{0}(5, p)$ with respect to $p$ is
\begin{align}
\f{R_{0}(5, p+4)}{R_{0}(5, p)} &=
\f{(4!)^{p+4}\left(\f{5(p+4)}{4e}\right)^{\f{5(p+4)}{4}}\exp(-\f{\sqrt{5(p+4)}}{2})}{\kappa_{0}\cdot 2^{\f{3}{2}(p+4)}\left(\f{3p+14}{4}\right)!\left(\f{p+2}{4}\right)!(4!)^{\f{p+2}{4}}(3!\cdot 2!)^{\f{3p+14}{4}}} \\
&\sq\cdot\f{\kappa_{0}\cdot 2^{\f{3}{2}p}\left(\f{3p+2}{4}\right)!\left(\f{p-2}{4}\right)!(4!)^{\f{p-2}{4}}(3!\cdot 2!)^{\f{3p+2}{4}}}{(4!)^{p}\left(\f{5p}{4e}\right)^{\f{5p}{4}}\exp(-\f{\sqrt{5p}}{2})} \\
&= \f{(4!)^{4}\left(1+\f{4}{p}\right)^{\f{5p}{4}}\left(\f{5(p+4)}{4e}\right)^{5}\exp(-\f{\sqrt{5}}{2}(\sqrt{p+4}-\sqrt{p}))}
{2^{6}\f{3p+14}{4}\f{3p+10}{4}\f{3p+6}{4}\f{p+2}{4}4!(3!\cdot 2!)^{3}} \\
&= \f{3125\left(1+\f{4}{p}\right)^{\f{5p}{4}}(p+4)^{5}\exp(-\f{\sqrt{5}}{2}(\sqrt{p+4}-\sqrt{p}))}
{96e^{5}(p+2)^{2}(3p+10)(3p+14)}.
\end{align}
Since
\begin{align}
p+2 < p+4,\q 3p+10 < 3(p+4),\q 3p+14 < 4(p+4),
\end{align}
we obtain
\begin{align}
\f{R_{0}(5, p+4)}{R_{0}(5, p)} &> \f{3125\left(1+\f{4}{p}\right)^{\f{5p}{4}}(p+4)^{5}\exp(-\f{\sqrt{5}}{2}(\sqrt{p+4}-\sqrt{p}))}
{96e^{5}(p+4)^{2}\cdot 3(p+4)\cdot 4(p+4)} \\
&= \f{3125\left(1+\f{4}{p}\right)^{\f{5p}{4}}(p+4)\exp(-\f{\sqrt{5}}{2}(\sqrt{p+4}-\sqrt{p}))}{1152e^{5}}.
\end{align}
By the monotonicity facts stated in \secref{sec:strategy},
\begin{align}
\f{R_{0}(5, p+4)}{R_{0}(5, p)} &\ge \f{3125\left(1+\f{4}{6}\right)^{\f{5\cdot 6}{4}}(6+4)\exp(-\f{\sqrt{5}}{2}(\sqrt{6+4}-\sqrt{6}))}{1152e^{5}} \\
&= 3.799\ldots > 1.
\end{align}
Therefore, $R_0(5,p)$ is strictly increasing as $p$ ranges over integers
$p\ge6$ with $p\equiv2\pmod{4}$.

A direct computation gives
\begin{align}
&R_{0}(5, 6) < 1,\q R_{0}(5, 10) < 1, \\
&R_{0}(5, 14) > 1,\q R_{0}(9, 6) > 1,\q R_{0}(9, 10) > 1.
\end{align}
By the monotonicity properties established above, if $p\ge14$, then
\begin{align}
R_{0}(a,p)\ge R_{0}(5,p)\ge R_{0}(5,14)>1,
\end{align}
while if $p=6$ or $10$ and $a\ge9$, then
\begin{align}
R_{0}(a,p)\ge R_{0}(9,p)>1.
\end{align}
Hence $R_{0}(a,p)>1$ for all admissible pairs $(a,p)$ except for
$(5,6)$ and $(5,10)$.

For the passports corresponding to $(a, p) = (5, 6)$ and $(5, 10)$, we obtain the following results:
\begin{itemize}
\item $[30, 2^{15}, 5^{6}]$ \\
$\vt{N} = 1038647610 > 664631175 = \vt{C}$.
\item $[50, 2^{25}, 5^{10}]$ \\
$\vt{N} = 3178849676735117385 > 142389639795904325 = \vt{C}$.
\end{itemize}

Thus, we obtain $\vt{N}>\vt{C}$ for all admissible pairs with
$a\equiv1\pmod{4}$ and $p\ge6$.
\HL

\noindent
(iii) The case $a \equiv 2 \pmod{4}$

Let $a = 4m+2$. Then
\begin{align}
q &= \f{pa}{2} = p(2m+1), \\
g &= \f{p(2m+1)-p+1}{2} = pm + \f{1}{2}.
\end{align}
Since the genus $g$ must be an integer, this case does not occur.
\HL

\noindent
(iv) The case $a \equiv 3 \pmod{4}$

Since $a \ge 5$, we have $a \ge 7$. Let $a = 4m+3$. Then
\begin{align}
m &\ge 1,\q q = \f{pa}{2} = \f{p(4m+3)}{2} = 2pm+ \f{3p}{2}, \\
g &= \f{2pm+\f{3p}{2}-p+1}{2} = pm + \f{p+2}{4},\q p \equiv 2 \npmod{4},\ p \ge 2, \\
q &=  2pm+ \f{3p}{2}\text{ is odd},\ q \ge 7, \\
u &= \fl{\f{g}{p}} = \fl{m + \f{p+2}{4p}} = m = \f{a-3}{4}, \\
v &= g-pu = pm + \f{p+2}{4} - pm = \f{p+2}{4} > 0.
\end{align}

By \eqref{eq:b2B} and \eqref{eq:N2C2},
\begin{align}
R_{0}(a, p) &= \f{\left(\f{pa}{4e}\right)^{\f{pa}{4}}\exp(-\f{\sqrt{pa}}{2})}{\kappa_{0}\cdot 2^{\left(\f{a}{2}-1\right)p}a^{p}p!}
\binom{p}{\f{p+2}{4}}\binom{a}{\f{a-1}{2}}^{\f{3p-2}{4}}\binom{a}{\f{a+3}{2}}^{\f{p+2}{4}} \\
&= \f{\left(\f{pa}{4e}\right)^{\f{pa}{4}}\exp(-\f{\sqrt{pa}}{2})}{\kappa_{0}\cdot 2^{\left(\f{a}{2}-1\right)p}a^{p}p!}
\f{p!}{\left(\f{p+2}{4}\right)!\left(\f{3p-2}{4}\right)!}
\left(\f{a!}{\left(\f{a-1}{2}\right)!\left(\f{a+1}{2}\right)!}\right)^{\f{3p-2}{4}}\left(\f{a!}{\left(\f{a+3}{2}\right)!\left(\f{a-3}{2}\right)!}\right)^{\f{p+2}{4}} \\
&= \f{((a-1)!)^{p}\left(\f{pa}{4e}\right)^{\f{pa}{4}}\exp(-\f{\sqrt{pa}}{2})}
{\kappa_{0}\cdot 2^{\left(\f{a}{2}-1\right)p}\left(\f{p+2}{4}\right)!\left(\f{3p-2}{4}\right)!\left(\left(\f{a-1}{2}\right)!\left(\f{a+1}{2}\right)!\right)^{\f{3p-2}{4}}\left(\left(\f{a+3}{2}\right)!\left(\f{a-3}{2}\right)!\right)^{\f{p+2}{4}}}.
\end{align}

To reduce the argument to finitely many cases, we first establish the following monotonicity properties:
\begin{itemize}
\item For each $p \ge 2$ with $p \equiv 2 \pmod{4}$, $R_{0}(a,p)$ is strictly increasing as $a$ ranges over
integers $a \ge 7$ with $a \equiv 3 \pmod{4}$.
\item $R_{0}(7,p)$ is strictly increasing as $p$ ranges over integers $p \ge 2$ with
$p \equiv 2 \pmod{4}$.
\end{itemize}

Since successive admissible values of $a$ in this case differ by $4$,
the ratio of successive values of $R_{0}$ with respect to $a$ is
\begin{align}
R_{0,a}(a, p) &\ceq \f{R_{0}(a+4, p)}{R_{0}(a, p)} \\
&= \f{((a+3)!)^{p}\left(\f{p(a+4)}{4e}\right)^{\f{p(a+4)}{4}}\exp(-\f{\sqrt{p(a+4)}}{2})}
{\kappa_{0}\cdot 2^{\left(\f{a+4}{2}-1\right)p}\left(\f{p+2}{4}\right)!\left(\f{3p-2}{4}\right)!\left(\left(\f{a+3}{2}\right)!\left(\f{a+5}{2}\right)!\right)^{\f{3p-2}{4}}\left(\left(\f{a+7}{2}\right)!\left(\f{a+1}{2}\right)!\right)^{\f{p+2}{4}}} \\
&\sq\cdot\f{\kappa_{0}\cdot 2^{\left(\f{a}{2}-1\right)p}\left(\f{p+2}{4}\right)!\left(\f{3p-2}{4}\right)!\left(\left(\f{a-1}{2}\right)!\left(\f{a+1}{2}\right)!\right)^{\f{3p-2}{4}}\left(\left(\f{a+3}{2}\right)!\left(\f{a-3}{2}\right)!\right)^{\f{p+2}{4}}}
{((a-1)!)^{p}\left(\f{pa}{4e}\right)^{\f{pa}{4}}\exp(-\f{\sqrt{pa}}{2})} \\
&= \f{(a(a+1)(a+2)(a+3))^{p}\left(1+\f{4}{a}\right)^{\f{pa}{4}}\left(\f{p(a+4)}{4e}\right)^{p}\exp(-\f{\sqrt{p}}{2}(\sqrt{a+4}-\sqrt{a}))}
{2^{2p}\left(\f{a+1}{2}\f{a+3}{2}\f{a+3}{2}\f{a+5}{2}\right)^{\f{3p-2}{4}}\left(\f{a+5}{2}\f{a+7}{2}\f{a-1}{2}\f{a+1}{2}\right)^{\f{p+2}{4}}} \\
&= \f{(a(a+2)(a+4)p)^{p}\left(1+\f{4}{a}\right)^{\f{pa}{4}}}{e^{p}((a-1)(a+7))^{\f{p+2}{4}}(a+3)^{\f{p-2}{2}}(a+5)^{p}}\exp(-\f{\sqrt{p}}{2}(\sqrt{a+4}-\sqrt{a})).
\end{align}
Since $(a-1)(a+7) = a^{2}+6a-7 < a^{2}+6a+9 = (a+3)^{2}$,
\begin{align}
\f{R_{0}(a+4, p)}{R_{0}(a, p)} &\ge \f{(a(a+2)(a+4)p)^{p}\left(1+\f{4}{a}\right)^{\f{pa}{4}}}{e^{p}(a+3)^{\f{p+2}{2}}(a+3)^{\f{p-2}{2}}(a+5)^{p}}\exp(-\f{\sqrt{p}}{2}(\sqrt{a+4}-\sqrt{a})) \\
\label{eq:La4aiv}
&= \left(\f{pa(a+2)(a+4)\left(1+\f{4}{a}\right)^{\f{a}{4}}}{e(a+3)(a+5)}\right)^{p}
\exp(-\f{\sqrt{p}}{2}(\sqrt{a+4}-\sqrt{a})).
\end{align}
Let $f(a,p)$ denote the right-hand side of \eqref{eq:La4aiv}.
This is the same function as the one considered in (ii-2), and hence it is
strictly increasing in $a$. Hence
\needspace{\baselineskip} 
\begin{align}
R_{0,a}(a,p) &\ge f(a, p) \ge f(7, p) \\
&= \left(\f{7p(7+2)(7+4)\left(1+\f{4}{7}\right)^{\f{7}{4}}}{e(7+3)(7+5)}\right)^{p}
\exp(-\f{\sqrt{p}}{2}(\sqrt{7+4}-\sqrt{7})) \\
&= \left(\f{363p}{40e}\left(\f{11}{7}\right)^{\f{3}{4}}\right)^{p}\exp(-\f{\sqrt{p}}{2}(\sqrt{11}-\sqrt{7})).
\end{align}

Then,
\begin{align}
\log f(7, p) &= p\left(\log p + \log\f{363}{40e} + \f{3}{4}\log\f{11}{7}\right) - \f{\sqrt{p}}{2}(\sqrt{11}-\sqrt{7}),
\end{align}
and for $p \ge 2$,
\begin{align}
\dv{p}\log f(7, p) &= \log p + \log\f{363}{40e} + \f{3}{4}\log\f{11}{7} + 1 - \f{\sqrt{11}-\sqrt{7}}{4\sqrt{p}} \\
&\ge \log 2 + \log\f{363}{40e} + \f{3}{4}\log\f{11}{7} + 1 - \f{\sqrt{11}-\sqrt{7}}{4\sqrt{2}} \\
&= 3.119\ldots > 0.
\end{align}
Hence $f(7, p)$ is strictly increasing for $p \ge 2$. Therefore,
\begin{align}
R_{0,a}(a,p) &\ge f(7, p) \ge f(7, 2) \\
&= \left(\f{363\cdot 2}{40e}\left(\f{11}{7}\right)^{\f{3}{4}}\right)^{2}\exp(-\f{\sqrt{2}}{2}(\sqrt{11}-\sqrt{7})) \\
&= 54.649\ldots > 1.
\end{align}
Thus, for each admissible $p \ge 2$, $R_{0}(a, p)$ is strictly increasing as $a$ ranges over integers $a \ge 7$
with $a \equiv 3 \pmod{4}$.

Since $p \equiv 2 \pmod{4}$, successive admissible values of $p$ differ by $4$, and the ratio of successive values
of $R_{0}(7, p)$ with respect to $p$ is
\begin{align}
\f{R_{0}(7, p+4)}{R_{0}(7, p)} &=  \f{(6!)^{p+4}\left(\f{7(p+4)}{4e}\right)^{\f{7(p+4)}{4}}\exp(-\f{\sqrt{7(p+4)}}{2})}
{\kappa_{0}\cdot 2^{\f{5}{2}(p+4)}\left(\f{p+6}{4}\right)!\left(\f{3p+10}{4}\right)!(3!\cdot4!)^{\f{3p+10}{4}}(5!\cdot2!)^{\f{p+6}{4}}} \\
&\sq\cdot\f{\kappa_{0}\cdot 2^{\f{5}{2}p}\left(\f{p+2}{4}\right)!\left(\f{3p-2}{4}\right)!(3!\cdot4!)^{\f{3p-2}{4}}(5!\cdot2!)^{\f{p+2}{4}}}
{(6!)^{p}\left(\f{7p}{4e}\right)^{\f{7p}{4}}\exp(-\f{\sqrt{7p}}{2})} \\
&= \f{(6!)^{4}\left(1+\f{4}{p}\right)^{\f{7p}{4}}\left(\f{7(p+4)}{4e}\right)^{7}\exp(-\f{\sqrt{7}}{2}(\sqrt{p+4}-\sqrt{p}))}
{2^{10}\f{p+6}{4}\f{3p+10}{4}\f{3p+6}{4}\f{3p+2}{4}(3!\cdot4!)^{3}\cdot5!\cdot2!} \\
&= \f{5^{3}7^{7}\left(1+\f{4}{p}\right)^{\f{7p}{4}}(p+4)^{7}\exp(-\f{\sqrt{7}}{2}(\sqrt{p+4}-\sqrt{p}))}
{2^{16}e^{7}(p+2)(p+6)(3p+2)(3p+10)}.
\end{align}
Here we have
\begin{align}
p + 2 < p+4,\q p+6 < 2(p+4),\q 3p+2 < 3(p+4),\q 3p+10 < 3(p+4).
\end{align}
Combining these inequalities with the monotonicity facts stated in
\secref{sec:strategy}, we obtain, for $p \ge 2$,
\begin{align}
\f{R_{0}(7, p+4)}{R_{0}(7, p)} &> \f{5^{3}7^{7}\left(1+\f{4}{p}\right)^{\f{7p}{4}}(p+4)^{7}\exp(-\f{\sqrt{7}}{2}(\sqrt{p+4}-\sqrt{p}))}
{2^{16}e^{7}\cdot2\cdot3\cdot3(p+4)^{4}} \\
&= \f{5^{3}7^{7}\left(1+\f{4}{p}\right)^{\f{7p}{4}}(p+4)^{3}\exp(-\f{\sqrt{7}}{2}(\sqrt{p+4}-\sqrt{p}))}
{2^{17}3^{2}e^{7}} \\
&\ge \f{5^{3}7^{7}\left(1+\f{4}{2}\right)^{\f{7\cdot 2}{4}}(2+4)^{3}\exp(-\f{\sqrt{7}}{2}(\sqrt{2+4}-\sqrt{2}))}{2^{17}3^{2}e^{7}} \\
&= 204.350\ldots > 1.
\end{align}
Therefore, $R_{0}(7, p)$ is strictly increasing as $p$ ranges over integers $p \ge 2$ with $p \equiv 2 \pmod{4}$.

A direct computation gives
\begin{align}
&R_{0}(7, 2) < 1, \\
&R_{0}(7, 6) > 1,\q R_{0}(11, 2) > 1.
\end{align}
By the monotonicity properties established above, if $p \ge 6$, then
\begin{align}
R_{0}(a, p)\ge R_{0}(7, p)\ge R_{0}(7, 6) > 1,
\end{align}
while if $p=2$ and $a \ge 11$, then
\begin{align}
R_{0}(a, p)\ge R_{0}(11, 2)>1.
\end{align}
Hence $R_{0}(a,p)>1$ for all admissible pairs $(a,p)$ except for $(7, 2)$.

For the passport corresponding to $(a,p)=(7,2)$, namely $[7^{2},2^{7},14]$, we have
\begin{align}
\vt{N}=2385>1309=\vt{C}.
\end{align}
Thus, $\vt{N}>\vt{C}$ for all admissible pairs with $a \equiv 3 \pmod{4}$.
\HL

By (i)--(iv), we conclude that every passport $[n,2^{q},a^{p}]$ with
$2\le p<q$, $a\ge5$, and genus at least~$2$ admits a \dde with trivial automorphism group.
\end{proof}
\HL


\subsection{\texorpdfstring{The Subcase $b=3$}{The Subcase b=3}}
\label{sec:class4b3}

\begin{prop}
\label{prop:class4b3}
If a uniform passport $[n, 3^{q}, a^{p}]$ with $n = pa = 3q$ and $2 \le p < q$ has genus
at least~$2$, then it admits a \dde with trivial automorphism group.
\end{prop}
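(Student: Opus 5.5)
I will prove the statement by showing $\vt{N(3,q,a)}>\vt{C(3,q)}$ outside a finite explicit list of pairs $(a,p)$. Those remaining pairs are settled by exact computation via \eqref{eq:Nbqa} and \eqref{eq:calcC}; if that comparison fails for one of them, I will instead exhibit an explicit pair $(x,y)$ and check \corref{cor:ADtrivial}.

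Here $n=3q=pa$, the condition $p<q$ forces $a\ge4$, and $g=(2q-p+1)/2$.

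\textbf{Upper bound for $\vt{C(3,q)}$.} I split \eqref{eq:Cbqsum} into $\ell=3$ and $\ell\ne3$.
\begin{itemize}
\item For $\ell=3$, \lemref{lem:C2q3q4q} gives
\begin{align}
\vt{C_{n/3}(3,q)}\le q!\left(\f{9e}{q}\right)^{q/3}\exp\left(2\left(\f{q}{9}\right)^{1/3}\right).
\end{align}
\item For a prime $\ell\ne3$ dividing $n$, we have $\ell\mid q$ and $\ell\nmid 3$, so \propref{prop:Cnl} gives $(3m)!\,\ell^{2m}/(3^{m}m!)$ with $m=q/\ell$. By Stirling \eqref{eq:Stirling} this is at most roughly $(\mathrm{const}\cdot q)^{2q/\ell}$, which is largest at $\ell=2$: about $(9q/e)^{q}\cdot\mathrm{const}$ divided by $q^{q/2}$-type factors.
\end{itemize}
I will bound the number of such primes by $\log_2 q$, exactly as in \propref{prop:c4b2age5}. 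A ratio argument in $q$ will then show that the $\ell=3$ term dominates, giving $\vt{C(3,q)}\le\kappa\, q!(9e/q)^{q/3}\exp(2(q/9)^{1/3})$ with an explicit $\kappa$ slightly above $1$.

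I will double-check the comparison of the $\ell=2$ term against the $\ell=3$ term. The $\ell=2$ term is roughly $q^{3q/2}$ up to exponential factors, while $q!\,q^{-q/3}$ is roughly $q^{2q/3}$. So the $\ell=2$ term might actually dominate. If it does, I will keep both terms, or simply use the cruder bound $\vt{C}\le\vt{D(n)}$ from \lemref{lem:Dupper}, namely $\kappa_1 2^{n/2}(n/2)!$ with $n/2=3q/2$.

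\textbf{Lower bound for $\vt{N}$.} Since $b=3$ is odd, the outer sum has more than one nonzero term, so I use \eqref{eq:N0g} (the term $g_1=0$):
\begin{align}
\vt{N}\ge \f{a(n-p)!}{2^{2g}3^{q}q!}\sum_{(j_k)\vDash g}\prod_{k}\binom{3}{2j_k+1}.
\end{align}
Each $j_k\in\{0,1\}$, with weight $3$ for $j_k=0$ and $1$ for $j_k=1$. Hence the inner sum equals $\binom{q}{g}3^{q-g}$, and this is nonzero because $g\le q$ follows from $p\ge2$.

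Therefore
\begin{align}
\vt{N}\ge \f{a(n-p)!\,3^{-g}}{2^{2g}g!\,(q-g)!}.
\end{align}
Here $(n-p)!=(3q-p)!$ is essentially $(3q)!$ divided by a factor at most $(3q)^{p}$. With $p=3q/a\le 3q/4$, this still leaves roughly $(3q)!^{3/4}$-size growth, which dominates the bound on $\vt{C}$, which is of size at most roughly $(3q/2)!\,2^{3q/2}$.

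\textbf{Main obstacle.} The main difficulty is making the ratio $\vt{N}/\vt{C}$ explicit and monotone in a usable parametrization. I will write the ratio as $R(a,p)$ with $q=pa/3$, split by $a\bmod 3$ and parity constraints (genus integrality forces $p$ odd), and then prove two monotonicity statements via successive ratios as in \propref{prop:c4b2age5}:
\begin{itemize}
\item $R$ increases in $a$ for fixed $p$;
\item $R$ increases in $p$ at the smallest admissible $a$.
\end{itemize}
The Stirling and \lemref{lem:gamma} estimates, together with the monotonicity facts of \secref{sec:strategy}, should handle these ratios. The finitely many small pairs, where $R\le1$ is expected (around $a\in\{4,5,6\}$ with small $p$), will then be checked by exact computation or by an explicit construction.
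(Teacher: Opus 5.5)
Your framework is the paper's: the $(g_1,g_2)=(0,g)$ lower bound (which you evaluate correctly), an upper bound for $\vt{C(3,q)}$ from \propref{prop:Cnl}, monotonicity, and a finite exact check. However, the decisive size comparison is wrong, and the fallback $\vt{C}\le\vt{D(n)}$ cannot work.

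When you size $\vt{N}$, you keep $(3q-p)!$ but drop the denominator $2^{2g}3^{g}g!\,(q-g)!$, which is itself of order $q^{q}$. Take the worst case $a=4$, $p=3q/4$. There your bound is $4\,(9q/4)!/\bigl(2^{5q/4+1}3^{5q/8+1/2}(5q/8+1/2)!\,(3q/8-1/2)!\bigr)$, which is exactly the paper's $L(q)$. Its logarithm is $\f{5}{4}q\log q+O(q)$, not $\f{9}{4}q\log q$. Nor is $\vt{N}$ itself of larger order: the character formula gives $\vt{N}\le 2^{n-1}n!/(4^{p}p!\,3^{q}q!)$.

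On the other side, $\vt{D(n)}\ge\vt{D_{n/2}}=2^{3q/2}(3q/2)!$, whose logarithm is $\f{3}{2}q\log q+O(q)$. So your claim that the $N$-bound dominates a $C$-bound of size $(3q/2)!\,2^{3q/2}$ is false. Comparing with $\vt{D(n)}$ fails for all sufficiently large passports $[12k,3^{4k},4^{3k}]$ with $k$ odd. It already fails at $k=1$: $\vt{N}=1220<47952=\vt{D(12)}$.

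What saves the argument is that the $\ell=2$ term, which by \propref{prop:Cnl} equals $(4/3)^{q/2}(3q/2)!/(q/2)!$, is only of order $q^{q}$. Your own Stirling bound $(\mathrm{const}\cdot q)^{2q/\ell}$ says so; your later estimate $q^{3q/2}$ is wrong. The $\ell=3$ term is of order $q^{2q/3}$. So your first plan, in which $\ell=3$ dominates, is false, as you suspected.

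The fix is to keep the $\ell=2$ contribution at its exact size. Never replace it by $2^{3q/2}(3q/2)!$ or by $\vt{D(n)}$. Even then, the ratio of $\vt{N}$ to it at $a=4$ is only about $q^{q/4}$ up to exponential factors.

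This is what the paper does. It takes $V_3(q)=(4/3)^{q/2}\Gamma(3q/2+1)/\Gamma(q/2+1)$ as the dominant term. It bounds $\vt{N}/\vt{C}$ below by $R_0/(1+1/R_1+1/R_2)$ with $R_0=L/V_3$. Because the margin is small, it needs exact values for $4\le q\le 10$.

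The paper also eliminates $a$ differently. By \lemref{lem:digamma}, the $N$-bound is decreasing in $p$ for fixed $q$, and $p\le 3q/4$, so everything reduces to a function of $q$ alone. Your two-variable monotonicity in $(a,p)$ is a workable alternative. Note, though, that the smallest admissible $a$ is $4$ only when $3\mid p$, and is $6$ otherwise. So the case split you need is by $p \bmod 3$, not by $a \bmod 3$.
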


\begin{proof}
By \eqref{eq:genus-uc}, the genus is
\begin{align}
g &= \f{3q-(p+q)+1}{2} = q - \f{p-1}{2}.
\end{align}
Since $g$ is an integer and $p\ge2$, $p$ is odd and hence $p\ge3$.
Since $p<q$, we also have $q\ge4$.

Substituting $b=3$ into \eqref{eq:N0g}, we obtain
\begin{align}
\vt{N(3, q, a)}
&\ge \f{a(n-p)!}{2^{2g}3^{q}q!}
\sum_{(j_{1},\dotsc,j_{q})\vDash g}
\prod_{k=1}^{q}\binom{3}{2j_{k}+1}
= \f{\f{3q}{p}(3q-p)!}{2^{2q-p+1}3^{q}q!}
\sum_{(j_{1},\dotsc,j_{q})\vDash g}
\prod_{k=1}^{q}\binom{3}{2j_{k}+1} \\
&= \f{q(3q-p)!}{2^{2q-p+1}3^{q-1}pq!}
\sum_{(j_{1},\dotsc,j_{q})\vDash g}
\prod_{k=1}^{q}\binom{3}{2j_{k}+1}.
\end{align}
The product in each summand is nonzero only if $2j_{k} + 1 \le 3$ for all $1 \le k \le q$. Hence only compositions whose parts
are $0$ or $1$ contribute to the sum.
Since such compositions have $g$ copies of $1$ and $q-g$ copies of $0$,
\begin{align}
\sum_{(j_{1},\dotsc,j_{q})\vDash g}
\prod_{k=1}^{q}\binom{3}{2j_{k}+1} &= \binom{q}{g}\binom{3}{3}^{g}\binom{3}{1}^{q-g} = 3^{q-g}\binom{q}{g}
= \f{3^{\f{p-1}{2}}q!}{\left(q-\f{p-1}{2}\right)!\left(\f{p-1}{2}\right)!}.
\end{align}
Therefore,
\begin{align}
\vt{N(3, q, a)} &\ge \f{q(3q-p)!}{2^{2q-p+1}3^{q-1}pq!}\cdot\f{3^{\f{p-1}{2}}q!}{\left(q-\f{p-1}{2}\right)!\left(\f{p-1}{2}\right)!} \\
\label{eq:N3qalow}
&= \f{q(3q-p)!}{2^{2q-p+1}3^{q-\f{p+1}{2}}p\left(q-\f{p-1}{2}\right)!\left(\f{p-1}{2}\right)!}.
\end{align}
We regard the right-hand side as a function of the real variable $p$, with $q$ fixed and $3 \le p \le q-1$. Let
\begin{align}
f_{q}(p) \ceq \f{q\Gamma(3q-p+1)}{2^{2q-p+1}3^{q-\f{p+1}{2}}p\Gamma\left(q-\f{p}{2}+\f{3}{2}\right)\Gamma\left(\f{p}{2}+\f{1}{2}\right)}.
\end{align}
Then $f_q(p)$ coincides with the right-hand side of \eqref{eq:N3qalow}
for odd integers $p$ with $3\le p\le q-1$.

Moreover,
\begin{align}
\log f_{q}(p) &= \log q + \log\Gamma(3q-p+1) - (2q-p+1)\log 2 - \left(q-\f{p+1}{2}\right)\log 3 \\
&\sq - \log p - \log\Gamma\left(q-\f{p}{2}+\f{3}{2}\right) - \log\Gamma\left(\f{p}{2}+\f{1}{2}\right), \\
\dv{p}\log f_{q}(p) &= -\psi(3q-p+1) + \log (2\sqrt{3}) - \f{1}{p} + \f{1}{2}\psi\left(q-\f{p}{2}+\f{3}{2}\right)
- \f{1}{2}\psi\left(\f{p}{2}+\f{1}{2}\right),
\end{align}
where
\begin{align}
\psi(x) = \dv{x}\log \Gamma(x) = \f{\Gamma'(x)}{\Gamma(x)}
\end{align}
is the digamma function.

By \lemref{lem:digamma}, we have
\begin{align}
\psi(3q-p+1) &> \log\left(3q-p+\f{1}{2}\right), \\
\psi\left(q-\f{p}{2}+\f{3}{2}\right) &< \log\left(q-\f{p}{2}+\f{3}{2}\right), \\
\psi\left(\f{p}{2}+\f{1}{2}\right) &> \log\f{p}{2}.
\end{align}
Hence,
\begin{align}
\dv{p}\log f_{q}(p) &< -\log\left(3q-p+\f{1}{2}\right)+ \log (2\sqrt{3}) - \f{1}{p} + \f{1}{2}\log\left(q-\f{p}{2}+\f{3}{2}\right) -\f{1}{2}\log\f{p}{2} \\
&= -\f{1}{p} + \log\f{4\sqrt{3}\sqrt{\f{2q+3}{p}-1}}{6q-2p+1}.
\end{align}
Since $3 \le p \le q-1$,
\begin{align}
\f{4\sqrt{3}\sqrt{\f{2q+3}{p}-1}}{6q-2p+1} &\le \f{4\sqrt{3}\sqrt{\f{2q+3}{3}-1}}{6q-2(q-1)+1} = \f{4\sqrt{2q}}{4q+3}
= \f{4\sqrt{2}}{4\sqrt{q}+\f{3}{\sqrt{q}}} \\
&\le \f{4\sqrt{2}}{2\sqrt{4\sqrt{q}\cdot\f{3}{\sqrt{q}}}} = \sqrt{\f{2}{3}}.
\end{align}
Therefore,
\begin{align}
\dv{p}\log f_{q}(p) &< -\f{1}{p} + \log \sqrt{\f{2}{3}} < -\f{1}{p} < 0.
\end{align}
Thus, $f_{q}(p)$ is strictly decreasing, and attains its minimum at the largest admissible value of $p$.

Let $p_{\max}$ denote the largest odd integer $p$ such that
$3\le p\le q-1$ and $p\mid 3q$. Then
\begin{itemize}
\item When $q \equiv 0 \pmod{4}$ \\
Let $q=4\cdot2^l k$, where $l\ge0$ and $k$ is odd.
Then $n=3q=3\cdot2^{l+2}k$, and the largest odd divisor of $n$ is $3k$.
Hence
\begin{align}
p_{\max} = 3k = \f{3}{2^{l+2}}q \le \f{3}{4}q.
\end{align}
\item When $q \equiv 2 \pmod{4}$ \\
Let $q=4m+2=2(2m+1)$ with $m\in\Zp$.
Then $n=3q=6(2m+1)$.
The largest odd divisor of $n$ is $3(2m+1)>q$, while the largest odd divisor
less than $q$ is $2m+1$. Hence
\begin{align}
p_{\max} = 2m+1 = \f{q}{2}.
\end{align}
\item When $q \equiv 1\text{ or }3 \pmod{4}$ \\
Then $n=3q$ is odd.
Since $p_{\max}<q=n/3$ and $p_{\max}\mid n$, the quotient $n/p_{\max}$
is an odd integer greater than $3$, and hence $n/p_{\max}\ge5$.
Therefore,
\begin{align}
p_{\max} \le \f{n}{5} = \f{3}{5}q.
\end{align}
\end{itemize}
In all cases, we have $p_{\max} \le 3q/4$.

Therefore,
\begin{align}
\vt{N(3, q, a)} \ge f_{q}\left(\f{3q}{4}\right)
&= \f{q\Gamma\left(3q-\f{3q}{4}+1\right)}{2^{2q-\f{3q}{4}+1}3^{q-\f{3q}{8}-\f{1}{2}}\cdot\f{3q}{4}\cdot\Gamma\left(q-\f{3q}{8}+\f{3}{2}\right)\Gamma\left(\f{3q}{8}+\f{1}{2}\right)} \\
\label{eq:Nlowerb3}
&= \f{\Gamma\left(\f{9q}{4}+1\right)}{2^{\f{5q}{4}-1}3^{\f{5q}{8}+\f{1}{2}}\Gamma\left(\f{5q}{8}+\f{3}{2}\right)\Gamma\left(\f{3q}{8}+\f{1}{2}\right)}.
\end{align}
Let $L(q)$ denote the right-hand side.

On the other hand, since $n=3q$, \eqref{eq:Cbqsum} gives
\begin{align}
\label{eq:Cupperb3}
\vt{C(3, q)} &\le \vt{C_{\f{n}{3}}(3, q)} + \sum_{\substack{\ell\colon \text{prime},\\ \ell\, \mid\, q,\ \ell \ge 5}} \vt{C_{\f{n}{\ell}}(3, q)} +
\begin{dcases}
\vt{C_{\f{n}{2}}(3, q)} & (2 \mid q) \\
0 & (2 \nmid q) \\
\end{dcases}.
\end{align}
Here, by \lemref{lem:C2q3q4q},
\begin{align}
\label{eq:Cn3b3}
\vt{C_{\f{n}{3}}(3, q)} &\le q!\left(\f{9e}{q}\right)^{\f{q}{3}}\exp\left(2\left(\f{q}{9}\right)^{\f{1}{3}}\right).
\end{align}
Moreover, when $2\mid q$, since $2\nmid b=3$, \propref{prop:Cnl} gives
\begin{align}
\vt{C_{\f{n}{2}}(3, q)} &= \left(\f{3q}{2}\right)!\f{2^{\f{(3-1)q}{2}}}{3^{\f{q}{2}}\left(\f{q}{2}\right)!}
\label{eq:Cn3b2}
= \left(\f{4}{3}\right)^{\f{q}{2}}\f{\left(\f{3q}{2}\right)!}{\left(\f{q}{2}\right)!}.
\end{align}

For odd primes at least~$5$, we use an argument similar to that in \secref{sec:class4b2age5}.

For an odd prime $\ell\mid n=3q$ with $\ell \ge 5$, we have $\ell\mid q$.
Let $m=q/\ell \in \Zp$. Since $\ell\nmid3$, \propref{prop:Cnl} gives
\begin{align}
\vt{C_{\f{n}{\ell}}(3, q)} &= \left(\f{3q}{\ell}\right)!\f{\ell^{\f{2q}{\ell}}}{3^{\f{q}{\ell}}\left(\f{q}{\ell}\right)!}
= \f{\ell^{2m}(3m)!}{3^{m}m!} \q (\ell\colon\text{prime}, \ell \mid q, \ell \ge 5).
\end{align}
By the bounds from Stirling's formula \eqref{eq:Stirling}, we obtain
\begin{align}
\vt{C_{\f{n}{\ell}}(3, q)} &\le \f{\ell^{2m}\sqrt{6 \pi m}\left(\f{3m}{e}\right)^{3m}}{3^{m}\sqrt{2 \pi m}\left(\f{m}{e}\right)^{m}}
\exp(\f{1}{36m}-\f{1}{12m+1}) \\
&= \sqrt{3}\left(\f{3\ell m}{e}\right)^{2m}\exp(\f{1}{36m}-\f{1}{12m+1}) \le \sqrt{3}\left(\f{3\ell m}{e}\right)^{2m}
= \sqrt{3}\left(\f{3q}{e}\right)^{\f{2q}{\ell}}.
\end{align}
Since $3q/e\ge12/e>1$, the right-hand side is decreasing as $\ell$ increases.

Let $\ell_{1}, \ldots, \ell_{s}$ be the distinct odd prime divisors of $n = 3q$ with $\ell_{i} \ge 5$. Since each $\ell_{i}$ divides $q$,
\begin{align}
\ell_{1}\cdots\ell_{s} \le q.
\end{align}
Since $\ell_{i} \ge 5$ for each $i$,
\begin{align}
\ell_{1}\cdots\ell_{s} \ge 5^{s}.
\end{align}
Hence $5^{s} \le q$ and
\begin{align}
s \le \log_{5}q = \f{\log q}{\log 5}.
\end{align}
Therefore, since $3q/e \ge 12/e > 1$,
\begin{align}
\sum_{\substack{\ell\colon \text{prime},\\ \ell\, \mid\, q,\ \ell \ge 5}} \vt{C_{\f{n}{\ell}}(3, q)}
&\le \sum_{\substack{\ell\colon \text{prime},\\ \ell\, \mid\, q,\ \ell \ge 5}} \sqrt{3}\left(\f{3q}{e}\right)^{\f{2q}{\ell}}
\le \sum_{\substack{\ell\colon \text{prime},\\ \ell\, \mid\, q,\ \ell \ge 5}} \sqrt{3}\left(\f{3q}{e}\right)^{\f{2q}{5}} \\
\label{eq:Cn3b5}
&\le \f{\sqrt{3}\log q}{\log 5}\left(\f{3q}{e}\right)^{\f{2q}{5}}.
\end{align}
Thus, by \eqref{eq:Cupperb3}, \eqref{eq:Cn3b3}, \eqref{eq:Cn3b2}, and
\eqref{eq:Cn3b5},
\begin{align}
\vt{C(3, q)} &\le V_{1}(q) + V_{2}(q) + V_{3}(q), \\
V_{1}(q) &\ceq q!\left(\f{9e}{q}\right)^{\f{q}{3}}\exp\left(2\left(\f{q}{9}\right)^{\f{1}{3}}\right), \\
V_{2}(q) &\ceq \f{\sqrt{3}\log q}{\log 5}\left(\f{3q}{e}\right)^{\f{2q}{5}}, \\
\label{eq:b3V3}
V_{3}(q) &\ceq \left(\f{4}{3}\right)^{\f{q}{2}}\f{\left(\f{3q}{2}\right)!}{\left(\f{q}{2}\right)!}
= \left(\f{4}{3}\right)^{\f{q}{2}}\f{\Gamma\left(\f{3q}{2}+1\right)}{\Gamma\left(\f{q}{2}+1\right)}.
\end{align}
Here, for simplicity, we include $V_3(q)$ in the upper bound also when $q$ is odd,
although there is no contribution corresponding to $\ell=2$ in that case.
For noninteger $M$, we define $M!\ceq\Gamma(M+1)$.

Combining this with \eqref{eq:Nlowerb3}, we obtain
\begin{align}
\label{eq:NCV}
\f{\vt{N(3, q, a)}}{\vt{C(3, q)}} &\ge \f{L(q)}{V_{1}(q) + V_{2}(q) + V_{3}(q)}.
\end{align}

Let
\begin{align}
R_{0}(q) \ceq \f{L(q)}{V_{3}(q)},\q R_{1}(q) &\ceq \f{V_{3}(q)}{V_{1}(q)}, \q R_{2}(q) \ceq \f{V_{3}(q)}{V_{2}(q)}.
\end{align}
Then
\begin{align}
\label{eq:NVR}
\f{\vt{N(3, q, a)}}{\vt{C(3, q)}} &\ge \f{L(q)}{\f{V_{3}(q)}{R_{1}(q)} + \f{V_{3}(q)}{R_{2}(q)} + V_{3}(q)}
= \f{R_{0}(q)}{1+ \f{1}{R_{1}(q)} + \f{1}{R_{2}(q)}}.
\end{align}

A direct computation gives
\begin{align}
R_{0}(q) &< 1\q (4 \le q \le 10), \\
R_{0}(11) &= 1.397\ldots > 1.
\end{align}

We first consider the case $q\ge11$ and estimate $R_0(q)$, $R_1(q)$, and $R_2(q)$ in this range.
The cases $4\le q\le10$ will be treated separately later by direct computation.
We have
\begin{align}
\f{R_{0}(q+1)}{R_{0}(q)} &= \f{L(q+1)}{L(q)}\cdot\f{V_{3}(q)}{V_{3}(q+1)} \\
&= \f{\Gamma\left(\f{9q}{4}+\f{13}{4}\right)}{2^{\f{5q}{4}+\f{1}{4}}3^{\f{5q}{8}+\f{9}{8}}\Gamma\left(\f{5q}{8}+\f{17}{8}\right)\Gamma\left(\f{3q}{8}+\f{7}{8}\right)}\cdot
\f{2^{\f{5q}{4}-1}3^{\f{5q}{8}+\f{1}{2}}\Gamma\left(\f{5q}{8}+\f{3}{2}\right)\Gamma\left(\f{3q}{8}+\f{1}{2}\right)}{\Gamma\left(\f{9q}{4}+1\right)} \\
&\sq\cdot\left(\f{4}{3}\right)^{\f{q}{2}}\f{\Gamma\left(\f{3q}{2}+1\right)}{\Gamma\left(\f{q}{2}+1\right)}
\left(\f{4}{3}\right)^{-\f{q+1}{2}}\f{\Gamma\left(\f{q}{2}+\f{3}{2}\right)}{\Gamma\left(\f{3q}{2}+\f{5}{2}\right)} \\
&= \f{1}{2^{\f{9}{4}}3^{\f{1}{8}}}\cdot\f{\Gamma\left(\f{9q}{4}+\f{13}{4}\right)\Gamma\left(\f{5q}{8}+\f{3}{2}\right)\Gamma\left(\f{3q}{8}+\f{1}{2}\right)\Gamma\left(\f{3q}{2}+1\right)\Gamma\left(\f{q}{2}+\f{3}{2}\right)}
{\Gamma\left(\f{9q}{4}+1\right)\Gamma\left(\f{5q}{8}+\f{17}{8}\right)\Gamma\left(\f{3q}{8}+\f{7}{8}\right)\Gamma\left(\f{3q}{2}+\f{5}{2}\right)\Gamma\left(\f{q}{2}+1\right)}.
\end{align}
By \lemref{lem:gamma},
\begin{align}
\f{\Gamma\left(\f{9q}{4}+\f{13}{4}\right)}{\Gamma\left(\f{9q}{4}+1\right)}
&= \f{\left(\f{9q}{4}+\f{9}{4}\right)\left(\f{9q}{4}+\f{5}{4}\right)\Gamma\left(\f{9q}{4}+\f{5}{4}\right)}{\Gamma\left(\f{9q}{4}+1\right)}
> \f{\left(\f{9q}{4}+\f{9}{4}\right)\left(\f{9q}{4}+\f{5}{4}\right)\left(\f{9q}{4}+1\right)}{\left(\f{9q}{4}+\f{5}{4}\right)^{\f{3}{4}}} \\
&= \left(\f{9q}{4}+\f{9}{4}\right)\left(\f{9q}{4}+\f{5}{4}\right)^{\f{1}{4}}\left(\f{9q}{4}+1\right), \\
\f{\Gamma\left(\f{5q}{8}+\f{3}{2}\right)}{\Gamma\left(\f{5q}{8}+\f{17}{8}\right)}
&> \f{1}{\left(\f{5q}{8}+\f{3}{2}\right)^{\f{5}{8}}}, \\
\f{\Gamma\left(\f{3q}{8}+\f{1}{2}\right)}{\Gamma\left(\f{3q}{8}+\f{7}{8}\right)}
&> \f{1}{\left(\f{3q}{8}+\f{1}{2}\right)^{\f{3}{8}}}, \\
\f{\Gamma\left(\f{3q}{2}+1\right)}{\Gamma\left(\f{3q}{2}+\f{5}{2}\right)}
&= \f{\Gamma\left(\f{3q}{2}+1\right)}{\left(\f{3q}{2}+\f{3}{2}\right)\Gamma\left(\f{3q}{2}+\f{3}{2}\right)}
> \f{1}{\left(\f{3q}{2}+\f{3}{2}\right)\left(\f{3q}{2}+1\right)^{\f{1}{2}}}, \\
\f{\Gamma\left(\f{q}{2}+\f{3}{2}\right)}{\Gamma\left(\f{q}{2}+1\right)} &> \f{\f{q}{2}+1}{\left(\f{q}{2}+\f{3}{2}\right)^{\f{1}{2}}}.
\end{align}
Therefore,
\begin{align}
\f{R_{0}(q+1)}{R_{0}(q)} &>
\f{\left(\f{9q}{4}+\f{9}{4}\right)\left(\f{9q}{4}+\f{5}{4}\right)^{\f{1}{4}}\left(\f{9q}{4}+1\right)\left(\f{q}{2}+1\right)}
{2^{\f{9}{4}}3^{\f{1}{8}}\left(\f{5q}{8}+\f{3}{2}\right)^{\f{5}{8}}\left(\f{3q}{8}+\f{1}{2}\right)^{\f{3}{8}}\left(\f{3q}{2}+\f{3}{2}\right)\left(\f{3q}{2}+1\right)^{\f{1}{2}}\left(\f{q}{2}+\f{3}{2}\right)^{\f{1}{2}}} \\
&= \f{3^{\f{7}{8}}(q+2)(9q+4)(9q+5)^{\f{1}{4}}}{2^{\f{11}{4}}(q+3)^{\f{1}{2}}(3q+2)^{\f{1}{2}}(3q+4)^{\f{3}{8}}(5q+12)^{\f{5}{8}}}.
\end{align}
Let $h(q)$ be the right-hand side. Then
\begin{align}
h(11) &= 1.400\ldots > 1, \\
\log h(q) &= \f{7}{8}\log 3 + \log(q+2) + \log(9q+4) + \f{1}{4}\log(9q+5) - \f{11}{4}\log 2 \\
&\sq - \f{1}{2}\log(q+3) - \f{1}{2}\log(3q+2)-\f{3}{8}\log(3q+4)-\f{5}{8}\log(5q+12), \\
\dv{q}\log h(q) &= \f{1}{q+2} + \f{9}{9q+4} + \f{9}{4(9q+5)} \\
&\sq - \f{1}{2(q+3)} - \f{3}{2(3q+2)} - \f{9}{8(3q+4)} - \f{25}{8(5q+12)}.
\end{align}
For $q \ge 11$,
\begin{align}
&q+2 \le q+\f{2}{11}q = \f{13}{11}q,\q 9q+4 \le 9q+\f{4}{11}q = \f{103}{11}q,\q 9q+5 \le 9q+\f{5}{11}q = \f{104}{11}q, \\
&q+3 > q,\q 3q+2 > 3q,\q 3q+4 > 3q,\q 5q+12 > 5q.
\end{align}
Hence
\begin{align}
\dv{q}\log h(q) &> \f{1}{\f{13}{11}q} + \f{9}{\f{103}{11}q} + \f{9}{4\cdot\f{104}{11}q}
- \f{1}{2q} - \f{3}{2\cdot3q} - \f{9}{8\cdot3q} - \f{25}{8\cdot5q} \\
&= \f{1941}{42848q} > 0.
\end{align}
Therefore, $h(q)$ is strictly increasing and $h(q)>1$ for $q\ge11$.
Hence $R_0(q+1)/R_0(q)>1$ for $q\ge11$.
It follows that $R_0(q)$ is strictly increasing. 

Thus,
\begin{align}
\label{eq:RV0}
R_{0}(q) \ge R_{0}(11) =  1.397\ldots > 1\q (q \ge 11).
\end{align}

We estimate $R_{1}(q)$ and $R_{2}(q)$ for $q \ge 11$.

The ratio of successive values of $V_{3}(q)$ is
\begin{align}
\f{V_{3}(q+1)}{V_{3}(q)} &= \left(\f{4}{3}\right)^{\f{q+1}{2}}\f{\Gamma\left(\f{3(q+1)}{2}+1\right)}{\Gamma\left(\f{q+1}{2}+1\right)}
\left(\f{4}{3}\right)^{-\f{q}{2}}\f{\Gamma\left(\f{q}{2}+1\right)}{\Gamma\left(\f{3q}{2}+1\right)} \\
\label{eq:V3q}
&= \f{2\Gamma\left(\f{3q}{2}+\f{5}{2}\right)\Gamma\left(\f{q}{2}+1\right)}{\sqrt{3}\Gamma\left(\f{3q}{2}+1\right)\Gamma\left(\f{q}{2}+\f{3}{2}\right)}.
\end{align}
By \lemref{lem:gamma},
\begin{align}
\f{\Gamma\left(\f{3q}{2}+\f{5}{2}\right)}{\Gamma\left(\f{3q}{2}+1\right)}
&= \f{\left(\f{3q}{2}+\f{3}{2}\right)\Gamma\left(\f{3q}{2}+\f{3}{2}\right)}{\Gamma\left(\f{3q}{2}+1\right)}
> \f{\left(\f{3q}{2}+\f{3}{2}\right)\left(\f{3q}{2}+1\right)}{\left(\f{3q}{2}+\f{3}{2}\right)^{\f{1}{2}}}
= \left(\f{3q}{2}+\f{3}{2}\right)^{\f{1}{2}}\left(\f{3q}{2}+1\right), \\
\f{\Gamma\left(\f{q}{2}+1\right)}{\Gamma\left(\f{q}{2}+\f{3}{2}\right)}
&> \f{1}{\left(\f{q}{2}+1\right)^{\f{1}{2}}}.
\end{align}
Hence
\begin{align}
\f{V_{3}(q+1)}{V_{3}(q)} &> \f{2\left(\f{3q}{2}+\f{3}{2}\right)^{\f{1}{2}}\left(\f{3q}{2}+1\right)}{\sqrt{3}\left(\f{q}{2}+1\right)^{\f{1}{2}}} = (3q+2)\sqrt{\f{q+1}{q+2}}.
\end{align}

Therefore,
\begin{align}
\f{R_{1}(q+1)}{R_{1}(q)} &= \f{V_{3}(q+1)}{V_{3}(q)}\f{V_{1}(q)}{V_{1}(q+1)} \\
&\ge (3q+2)\sqrt{\f{q+1}{q+2}}\cdot
q!\left(\f{9e}{q}\right)^{\f{q}{3}}\exp\left(2\left(\f{q}{9}\right)^{\f{1}{3}}\right) \\
&\sq\cdot\f{1}{(q+1)!}\left(\f{9e}{q+1}\right)^{-\f{q+1}{3}}\exp\left(-2\left(\f{q+1}{9}\right)^{\f{1}{3}}\right) \\
&= (9e)^{-\f{1}{3}}\left(1+\f{1}{q}\right)^{\f{q}{3}}\f{3q+2}{(q+1)^{\f{1}{6}}(q+2)^{\f{1}{2}}}
\exp(-\f{2}{\sqrt[3]{9}}(\sqrt[3]{q+1}-\sqrt[3]{q})).
\end{align}
For $q \ge 11$, we have
\begin{align}
3q+2 > 3q,\q q+1 \le \f{12}{11}q,\q q+2 \le \f{13}{11}q,
\end{align}
hence
\begin{align}
\f{3q+2}{(q+1)^{\f{1}{6}}(q+2)^{\f{1}{2}}} > \f{3q}{\left(\f{12}{11}q\right)^{\f{1}{6}}\left(\f{13}{11}q\right)^{\f{1}{2}}}
= \left(\f{3^{5}11^{4}q^{2}}{2^{2}13^{3}}\right)^{\f{1}{6}}.
\end{align}
Therefore, by the monotonicity facts stated in \secref{sec:strategy},
\begin{align}
\f{R_{1}(q+1)}{R_{1}(q)} &> (9e)^{-\f{1}{3}}\left(1+\f{1}{q}\right)^{\f{q}{3}}
\left(\f{3^{5}11^{4}q^{2}}{2^{2}13^{3}}\right)^{\f{1}{6}}\exp(-\f{2}{\sqrt[3]{9}}(\sqrt[3]{q+1}-\sqrt[3]{q})) \\
&\ge (9e)^{-\f{1}{3}}\left(1+\f{1}{11}\right)^{\f{11}{3}}
\left(\f{3^{5}11^{4}11^{2}}{2^{2}13^{3}}\right)^{\f{1}{6}}\exp(-\f{2}{\sqrt[3]{9}}(\sqrt[3]{11+1}-\sqrt[3]{11})) \\
&= 2.691\ldots > 1.
\end{align}
Thus, $R_{1}(q)$ is strictly increasing for $q\ge11$. A direct computation gives
\begin{align}
\label{eq:RV1}
R_{1}(q) &\ge R_{1}(11) = 227.981\ldots \q (q \ge 11).
\end{align}

We also have
\begin{align}
\f{R_{2}(q+1)}{R_{2}(q)} &= \f{V_{3}(q+1)}{V_{3}(q)}\f{V_{2}(q)}{V_{2}(q+1)} \\
&\ge (3q+2)\sqrt{\f{q+1}{q+2}}
\f{\sqrt{3}\log q}{\log 5}\left(\f{3q}{e}\right)^{\f{2q}{5}}
\f{\log 5}{\sqrt{3}\log (q+1)}\left(\f{3q+3}{e}\right)^{-\f{2q+2}{5}} \\
&= \left(\f{e}{3}\right)^{\f{2}{5}}\f{(3q+2)(q+1)^{\f{1}{10}}}{(q+2)^{\f{1}{2}}}\f{\log q}{\log (q+1)}\left(1+\f{1}{q}\right)^{-\f{2q}{5}}.
\end{align}
Here, for $q \ge 11$,
\begin{align}
3q+2 > 3q,\q q+1 > q,\q q+2 \le \f{13}{11}q,
\end{align}
hence
\begin{align}
\f{(3q+2)(q+1)^{\f{1}{10}}}{(q+2)^{\f{1}{2}}} > \f{3q\cdot q^{\f{1}{10}}}{(\f{13}{11}q)^{\f{1}{2}}} = 3\sqrt{\f{11}{13}}q^{\f{3}{5}}.
\end{align}
Therefore, by the monotonicity facts stated in \secref{sec:strategy},
\begin{align}
\f{R_{2}(q+1)}{R_{2}(q)} &> \left(\f{e}{3}\right)^{\f{2}{5}}3\sqrt{\f{11}{13}}11^{\f{3}{5}}\f{\log 11}{\log (11+1)}e^{-\f{2}{5}}
= 7.233\ldots > 1\q(q \ge 11).
\end{align}
Thus, $R_{2}(q)$ is strictly increasing for $q\ge11$. A direct computation gives
\begin{align}
\label{eq:RV2}
R_{2}(q) &\ge R_{2}(11) = 9512891.457\ldots \q (q \ge 11).
\end{align}

By \eqref{eq:NVR}, \eqref{eq:RV0}, \eqref{eq:RV1}, and \eqref{eq:RV2},
\begin{align}
\f{\vt{N(3, q, a)}}{\vt{C(3, q)}} &\ge \f{R_{0}(11)}{1+ \f{1}{R_{1}(11)} + \f{1}{R_{2}(11)}} = 1.391\ldots > 1 \q (q \ge 11).
\end{align}
Thus, we obtain $\vt{N} > \vt{C}$ for $q \ge 11$.

\tabref{tab:beq3NC} shows the exact values of $\vt{N}$ and $\vt{C}$
for the admissible passports of genus at least~$2$ with $b = 3$ and $4 \le q \le 10$.
It follows from the table that $\vt{N}>\vt{C}$ also holds for $4 \le q\le10$.

\begin{table}[htbp]
\centering
\small
\begin{tabular}{|l|c|r|r|}
\hline
Passport & Genus & \multicolumn{1}{c|}{$\vt{N(3, q, a)}$} & \multicolumn{1}{c|}{$\vt{C(3, q)}$} \\
\hline
$[12, 3^{4}, 4^{3}]$ & 3 & 1220 & 796 \\
\hline
$[15, 3^{5}, 5^{3}]$ & 4 & 112448 & 800 \\
\hline
$[18, 3^{6}, 6^{3}]$ & 5 & 17942400 & 149464 \\
\hline
$[21, 3^{7}, 7^{3}]$ & 6 & 4324710400 & 55664 \\
\hline
$[24, 3^{8}, 8^{3}]$ & 7 & 1471678700800 & 63473200 \\
\hline
$[27, 3^{9}, 9^{3}]$ & 8 & 674307938304000 & 3907520 \\
\hline
$[30, 3^{10}, 6^{5}]$ & 8 & 2573333642880000 & 45964920190 \\
\hline
$[30, 3^{10}, 10^{3}]$ & 9 & 400339523173990400 & 45964920190\\
\hline
\end{tabular}
\HL
\caption{Exact values of $\vt{N(3, q, a)}$ and $\vt{C(3, q)}$}\label{tab:beq3NC}
\end{table}

Therefore, every uniform passport $[n,3^{q},a^{p}]$ with $n=pa=3q$, $2\le p<q$, and genus
at least~$2$ admits a \dde with trivial automorphism group.
\end{proof}
\HL


\subsection{\texorpdfstring{The Subcase $b=4$}{The Subcase b=4}}
\label{sec:class4b4}

\begin{prop}
\label{prop:class4b4}
If a uniform passport $[n, 4^{q}, a^{p}]$ with $n = pa = 4q$ and $2 \le p < q$ has genus
at least~$2$, then it admits a \dde with trivial automorphism group.
\end{prop}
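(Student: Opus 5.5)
We follow the template of \propref{prop:class4b3}: show $\vt{N(4,q,a)}>\vt{C(4,q)}$ for all but finitely many $q$, and settle the remaining passports by exact computation from \eqref{eq:Nbqa} and \eqref{eq:calcC}.

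\textbf{Parameters.} By \eqref{eq:genus-uc} the genus is
\begin{align}
g=\f{4q-(p+q)+1}{2}=\f{3q-p+1}{2}.
\end{align}
Since $a=4q/p>4$, we have $p<q$, and $p$ and $q$ have opposite parity modulo the genus condition.

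\textbf{Lower bound for $\vt{N}$.} The simplest route is the general bound \eqref{eq:Nbabq} with $b=4$:
\begin{align}
\vt{N(4,q,a)}\ge \f{4(3q)!}{2^{2q}3\sqrt2(q-2)!}\left(\f{2}{5}\right)^{\f{q-1}{2}},
\end{align}
which depends only on $q$. If this proves too weak for moderate $q$, we would instead follow \secref{sec:class4b3} and use \eqref{eq:N0g}. There the inner sum over compositions of $g$ into $q$ parts with $2j_k+1\le 3$ is explicit:
\begin{align}
\binom{q}{g}\binom{4}{3}^{g}\binom{4}{1}^{q-g}=4^{q}\binom{q}{g},
\end{align}
valid when $g\le q$. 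This gives a bound in $p$ and $q$. Treating it as a Gamma-function expression in real $p$, Lemma~\ref{lem:digamma} should show it is decreasing in $p$. We would then substitute a bound $p_{\max}\le cq$ obtained from the divisibility condition $p\mid 4q$, $p<q$; for example $p\le q/2$ when $p$ is even, and $p$ odd dividing $q$ with $p<q$ gives $p\le q/3$.

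\textbf{Upper bound for $\vt{C}$.} By \eqref{eq:Cbqsum} we split the prime divisors of $n=4q$ into three groups:
\begin{itemize}
\item $\ell=2$: \lemref{lem:C2q3q4q} gives $V_1(q)=(2q)!\,(4e/q)^{q/2}\exp(\sqrt q/2)$, which is the dominant term.
\item $\ell=3$, when $3\mid q$: \propref{prop:Cnl} with $\ell\nmid b$ gives
\begin{align}
V_3(q)=(4q/3)!\,3^{q}\big/\bigl(4^{q/3}(q/3)!\bigr).
\end{align}
\item $\ell\ge5$: Stirling's bound as before, $\vt{C_{n/\ell}}\le 2\,(4q/e)^{3q/\ell}$, summed over at most $\log q/\log 5$ primes. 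Call this $V_2(q)$.
\end{itemize}

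\textbf{Comparison.} Set $L(q)$ equal to the lower bound for $\vt{N}$, and form the ratios $R_0=L/V_1$, $R_1=V_1/V_2$, $R_2=V_1/V_3$. Using Lemma~\ref{lem:gamma} and the monotonicity facts in \secref{sec:strategy}, we show each successive ratio $R_i(q+1)/R_i(q)$ exceeds $1$ beyond some explicit $q_0$. It then follows that
\begin{align}
\f{\vt N}{\vt C}\ge \f{R_0}{1+1/R_1+1/R_2}>1 \quad\text{for } q\ge q_0.
\end{align}

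\textbf{Main obstacle.} The hard step is making $R_0>1$ hold with a reasonably small $q_0$. To leading order $L\sim (3q)!/(q!\,4^{q}\cdot\text{poly})$ while $V_1\sim(2q)!\,(4e/q)^{q/2}$, and it is not clear the constants favour $N$ for small $q$. We would first try the refined bound from \eqref{eq:N0g} with the $p_{\max}$ reduction. If that fails, we would split by the residue of $q$ to get sharper bounds on $p$.

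\textbf{Finite exceptions.} The finitely many $q<q_0$ would be tabulated with exact values of $\vt N$ and $\vt C$. Any case with $\vt N\le\vt C$ would be handled by exhibiting an explicit $y$ and checking via \corref{cor:ADtrivial} that $x^{k}y\ne yx^{k}$ for all $1\le k\le n-1$.
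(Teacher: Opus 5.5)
Your primary route is the paper's: the lower bound \eqref{eq:Nbabq} with $b=4$, the $\ell=2$ term $V_1$ from \lemref{lem:C2q3q4q}, a Stirling bound for odd $\ell$, and the ratio $R_0/(1+1/R_1)$.

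The only structural difference is that you split off $\ell=3$. The paper does not need to: it bounds every odd-prime term by $2(4q/e)^{3q/\ell}\le 2(4q/e)^{q}$ and multiplies by $\log q/\log 3$. That total is already negligible against $V_1$, since $R_1$ is increasing and $R_1(7)\approx 3.5\cdot 10^{4}$.

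The obstacle you anticipate does not materialise. The paper shows $R_0(q+1)/R_0(q)>1$ for all $q\ge 3$, using $(3q+1)(3q+2)/((q-1)(2q+1))>9/2$, and computes $R_0(7)=1.025\ldots$, so $q_0=7$. The leftover values $q\in\{3,4,5,6\}$ give only four passports: $[12,4^{3},6^{2}]$, $[20,4^{5},5^{4}]$, $[20,4^{5},10^{2}]$ and $[24,4^{6},8^{3}]$. Each has $\vt{N}>\vt{C}$ by exact computation, so no explicit $y$ is needed.

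However, the remedy you propose for that obstacle would have failed. For $b=4$ the bound \eqref{eq:N0g} is identically zero. A summand is nonzero only if every $j_k\le 1$, which forces $g\le q$. But $g=(3q-p+1)/2\ge q+1$ because $p\le q-1$. This is exactly \eqref{eq:inner0} with $q\lfloor 3/2\rfloor=q<g$.

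So the $b=3$ template does not transfer: there $g=q-(p-1)/2\le q$. For $b=4$ the term that survives is $(g_1,g_2)=(g,0)$, i.e.\ \eqref{eq:Ng0}, which is what \eqref{eq:Nbabq} is derived from.

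Your divisibility bounds for $p_{\max}$ are also wrong:
\begin{itemize}
\item $[20,4^{5},5^{4}]$ has even $p=4>q/2$;
\item $[24,4^{6},8^{3}]$ has odd $p=3=q/2>q/3$.
\end{itemize}

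Neither error affects your main line, because the crude bound already suffices. But had it actually been too weak, your stated fallback would not have rescued the argument.
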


\begin{proof}
By \eqref{eq:genus-uc}, the genus is
\begin{align}
g &= \f{4q-(p+q)+1}{2} = \f{3q-p+1}{2}.
\end{align}
Hence $p$ and $q$ have opposite parity.

By \eqref{eq:Nbabq},
\begin{align}
\vt{N(4, q, a)} &\ge \f{4(q(4-1))!}{2^{q(4-2)}3\sqrt{2}(q-2)!}\left(\f{4}{2(4+1)}\right)^{\f{q-1}{2}}
\label{eq:Nlowerb4}
= \f{(3q)!}{2^{2q-2}3\sqrt{2}(q-2)!}\left(\f{2}{5}\right)^{\f{q-1}{2}}.
\end{align}

Let $L(q)$ denote the right-hand side.

On the other hand, by \eqref{eq:Cbqsum} and \lemref{lem:C2q3q4q},
\begin{align}
\vt{C(4, q)} &\le \sum_{\substack{\ell\colon \text{prime}\\ \ell\, \mid\, n}} \vt{C_{\f{n}{\ell}}(4, q)}
= \vt{C_{\f{n}{2}}(4, q)} + \sum_{\substack{\ell\colon \text{odd prime}\\ \ell\, \mid\, n}} \vt{C_{\f{n}{\ell}}(4, q)} \\
&\le (2q)!\left(\f{4e}{q}\right)^{\f{q}{2}}\exp\left(\f{\sqrt{q}}{2}\right) + \sum_{\substack{\ell\colon \text{odd prime}\\ \ell\, \mid\, n}} \vt{C_{\f{n}{\ell}}(4, q)}.
\end{align}
For odd primes, we use an argument similar to that in \secref{sec:class4b2age5}.
For an odd prime $\ell\mid n=4q$, we have $\ell\mid q$.
Let $m=q/\ell \in \Zp$. Since $\ell \nmid 4$, \propref{prop:Cnl} gives
\begin{align}
\vt{C_{\f{n}{\ell}}(4, q)} &= \left(\f{4q}{\ell}\right)!\f{\ell^{\f{3q}{\ell}}}{4^{\f{q}{\ell}}\left(\f{q}{\ell}\right)!}
= \f{\ell^{3m}(4m)!}{4^{m}m!}.
\end{align}
By the bounds from Stirling's formula \eqref{eq:Stirling},
\begin{align}
\vt{C_{\f{n}{\ell}}(4, q)} &\le \f{\ell^{3m}\sqrt{8 \pi m}\left(\f{4m}{e}\right)^{4m}}{4^{m}\sqrt{2 \pi m}\left(\f{m}{e}\right)^{m}}
\exp(\f{1}{48m}-\f{1}{12m+1}) \\
&= 2\left(\f{4\ell m}{e}\right)^{3m}\exp(\f{1}{48m}-\f{1}{12m+1}) \le 2\left(\f{4\ell m}{e}\right)^{3m}
= 2\left(\f{4q}{e}\right)^{\f{3q}{\ell}}.
\end{align}
Since $4q/e \ge 12/e>1$, the right-hand side is decreasing as $\ell$ increases.

Let $\ell_{1}, \ldots, \ell_{s}$ be the distinct odd prime divisors of $n = 4q$. Since each $\ell_{i}$ divides $q$,
\begin{align}
\ell_{1}\cdots\ell_{s} \le q.
\end{align}
Since $\ell_{i} \ge 3$ for each $i$,
\begin{align}
\ell_{1}\cdots\ell_{s} \ge 3^{s}.
\end{align}
Hence $3^{s} \le q$ and
\begin{align}
s \le \log_{3}q = \f{\log q}{\log 3}.
\end{align}
Therefore, since $4q/e \ge 12/e > 1$,
\begin{align}
\sum_{\substack{\ell\colon \text{odd prime}\\ \ell\, \mid\, n}} \vt{C_{\f{n}{\ell}}(4, q)}
&\le \sum_{\substack{\ell\colon \text{odd prime}\\ \ell\, \mid\, n}} 2\left(\f{4q}{e}\right)^{\f{3q}{\ell}}
\le \sum_{\substack{\ell\colon \text{odd prime}\\ \ell\, \mid\, n}} 2\left(\f{4q}{e}\right)^{\f{3q}{3}} \\
&\le  \f{2\log q}{\log 3}\left(\f{4q}{e}\right)^{q}.
\end{align}
Thus,
\begin{align}
\vt{C(4, q)} &\le V_{1}(q) + V_{2}(q), \\
V_{1}(q) &\ceq (2q)!\left(\f{4e}{q}\right)^{\f{q}{2}}\exp\left(\f{\sqrt{q}}{2}\right), \\
V_{2}(q) &\ceq \f{2\log q}{\log 3}\left(\f{4q}{e}\right)^{q}.
\end{align}

Combining this with \eqref{eq:Nlowerb4}, we obtain
\begin{align}
\f{\vt{N(4, q, a)}}{\vt{C(4, q)}} &\ge \f{L(q)}{V_{1}(q)+V_{2}(q)}.
\end{align}
Let
\begin{align}
R_{0}(q) \ceq \f{L(q)}{V_{1}(q)},\q R_{1}(q) \ceq \f{V_{1}(q)}{V_{2}(q)}.
\end{align}
Then
\begin{align}
\label{eq:N4RV}
\f{\vt{N(4, q, a)}}{\vt{C(4, q)}} &\ge \f{L(q)}{V_{1}(q) + \f{V_{1}(q)}{R_{1}(q)}} = \f{R_{0}(q)}{1+\f{1}{R_{1}(q)}}.
\end{align}

We have
\begin{align}
\f{R_{0}(q+1)}{R_{0}(q)} &= \f{L(q+1)}{L(q)}\cdot\f{V_{1}(q)}{V_{1}(q+1)} \\
&= \f{(3q+3)!}{2^{2q}3\sqrt{2}(q-1)!}\left(\f{2}{5}\right)^{\f{q}{2}}
\f{2^{2q-2}3\sqrt{2}(q-2)!}{(3q)!}\left(\f{2}{5}\right)^{-\f{q-1}{2}} \\
&\sq\cdot(2q)!\left(\f{4e}{q}\right)^{\f{q}{2}}\exp\left(\f{\sqrt{q}}{2}\right)
\f{1}{(2(q+1))!}\left(\f{4e}{q+1}\right)^{-\f{q+1}{2}}\exp\left(-\f{\sqrt{q+1}}{2}\right) \\
&= \f{3}{8\sqrt{10e}}\cdot\f{(3q+1)(3q+2)}{(q-1)(2q+1)}\left(1+\f{1}{q}\right)^{\f{q}{2}}\sqrt{q+1}
\exp(-\f{1}{2}(\sqrt{q+1}-\sqrt{q})).
\end{align}
Let $f(q)$ denote the right-hand side. Then
\begin{align}
f(3) = 1.521\ldots > 1.
\end{align}
For $q \ge 4$, by the monotonicity facts stated in \secref{sec:strategy},
\begin{align}
f(q) &\ge \f{3}{8\sqrt{10e}}\cdot\f{(3q+1)(3q+2)}{(q-1)(2q+1)}\left(1+\f{1}{4}\right)^{\f{4}{2}}\sqrt{4+1}
\exp(-\f{1}{2}(\sqrt{4+1}-\sqrt{4})) \\
&= \f{75}{128\sqrt{2e}}\exp(-\f{1}{2}(\sqrt{5}-2))\f{(3q+1)(3q+2)}{(q-1)(2q+1)}.
\end{align}
Let $h(q) \ceq (3q+1)(3q+2)/((q-1)(2q+1))$. Then
\begin{align}
h'(q) &= -\f{27q^{2}+26q+7}{(q-1)^{2}(2q+1)^{2}} < 0 \q(q \ge 4), \\
\lim_{q\to\infty}h(q) &= \f{9}{2}.
\end{align}
Hence $h(q) > 9/2$. Therefore, we obtain
\begin{align}
f(q) &> \f{75}{128\sqrt{2e}}\exp(-\f{1}{2}(\sqrt{5}-2))\cdot\f{9}{2} \\
&= \f{675}{256\sqrt{2}}\exp(-\f{1}{2}(\sqrt{5}-1))= 1.004\ldots > 1 \q(q \ge 4).
\end{align}
Thus,
\begin{align}
\f{R_{0}(q+1)}{R_{0}(q)} &= f(q) > 1\q(q \ge 3).
\end{align}
Hence $R_{0}(q)$ is strictly increasing. A direct computation gives
\begin{align}
&R_{0}(q) < 1\q(3 \le q \le 6), \\
\label{eq:R07}
&R_{0}(7) = 1.025\ldots > 1.
\end{align}

For $q \ge 7$,
\begin{align}
\f{R_{1}(q+1)}{R_{1}(q)} &= \f{V_{1}(q+1)}{V_{1}(q)}\cdot\f{V_{2}(q)}{V_{2}(q+1)} \\
&= (2q+2)!\left(\f{4e}{q+1}\right)^{\f{q+1}{2}}\exp\left(\f{\sqrt{q+1}}{2}\right)
\f{1}{(2q)!}\left(\f{4e}{q}\right)^{-\f{q}{2}}\exp\left(-\f{\sqrt{q}}{2}\right) \\*
&\sq\cdot\f{2\log q}{\log 3}\left(\f{4q}{e}\right)^{q}
\f{\log 3}{2\log (q+1)}\left(\f{e}{4(q+1)}\right)^{q+1} \\
&= \f{e^{\f{3}{2}}(2q+1)}{\sqrt{q+1}}\left(1+\f{1}{q}\right)^{-\f{3q}{2}}
\exp(\f{1}{2}(\sqrt{q+1}-\sqrt{q}))\f{\log q}{\log(q+1)}.
\end{align}
By the monotonicity facts stated in \secref{sec:strategy},
\begin{align}
\f{R_{1}(q+1)}{R_{1}(q)} &\ge \f{e^{\f{3}{2}}(2q+1)}{\sqrt{q+1}}e^{-\f{3}{2}}\cdot 1 \cdot\f{\log 7}{\log 8}
\ge \f{(2q+1)}{\sqrt{2q+1}}\f{\log 7}{\log 8}
\ge \sqrt{2\cdot 7+1}\cdot\f{\log 7}{\log 8} \\
&= 3.624\ldots > 1.
\end{align}
Hence $R_{1}(q)$ is strictly increasing and
\begin{align}
\label{eq:R17}
R_{1}(q) &\ge R_{1}(7) = 35072.788\ldots\q (q \ge 7).
\end{align}

Therefore, by \eqref{eq:N4RV}, \eqref{eq:R07}, and \eqref{eq:R17},
\begin{align}
\f{\vt{N(4, q, a)}}{\vt{C(4, q)}} &\ge \f{R_{0}(7)}{1+\f{1}{R_{1}(7)}} = 1.025\ldots > 1 \q (q \ge 7).
\end{align}
Thus, $\vt{N}>\vt{C}$ for $q\ge7$.

\tabref{tab:beq4NC} lists the exact values of $\vt{N}$ and $\vt{C}$ for the admissible passports
with $b = 4$ and $3 \le q \le 6$. These values show that $\vt{N}>\vt{C}$ also holds in this range.

\begin{table}[htbp]
\centering
\small
\begin{tabular}{|l|c|r|r|}
\hline
Passport & Genus & \multicolumn{1}{c|}{$\vt{N(4, q, a)}$} & \multicolumn{1}{c|}{$\vt{C(4, q)}$} \\
\hline
$[12, 4^{3}, 6^{2}]$ & 4 & 31860 & 1716\\
\hline
$[20, 4^{5}, 5^{4}]$ & 6 & 2508616656 & 8498180\\
\hline
$[20, 4^{5}, 10^{2}]$ & 7 &188376224400 & 8498180\\
\hline
$[24, 4^{6}, 8^{3}]$ & 8 & 131414201226000 & 1159169616\\
\hline
\end{tabular}
\HL
\caption{Exact values of $\vt{N(4, q, a)}$ and $\vt{C(4, q)}$}\label{tab:beq4NC}
\end{table}

Therefore, every uniform passport $[n,4^{q},a^{p}]$ with $n=pa=4q$, $2\le p<q$, and genus
at least~$2$ admits a \dde with trivial automorphism group.
\end{proof}


\subsection{\texorpdfstring{The Subcase $b \ge 5$}{The Subcase b >= 5}}
\label{sec:class4bge5}

\begin{prop}
\label{prop:class4bge5}
If a uniform passport $[n, b^{q}, a^{p}]$ with $n = pa = qb$, $2 \le p < q$, and $b \ge 5$ has genus
at least~$2$, then it admits a \dde with trivial automorphism group.
\end{prop}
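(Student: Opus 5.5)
We plan to prove this by the weaker but $b$-uniform comparison $\vt{N(b,q,a)}>\vt{D(n)}$, as announced in \secref{sec:strategy}. For the lower bound we use \eqref{eq:Nbabq}, which depends only on $b$ and $q$:
\begin{align}
L(b,q)\ceq\f{b\,(q(b-1))!}{2^{q(b-2)}3\sqrt{2}\,(q-2)!}\left(\f{b}{2(b+1)}\right)^{\f{q-1}{2}}.
\end{align}
For the upper bound we use \lemref{lem:Dupper} with $n=bq$:
\begin{align}
U(b,q)\ceq\kappa_{1}2^{\f{bq}{2}}\left(\f{bq}{2}\right)!,
\end{align}
where $M!\ceq\Gamma(M+1)$ for noninteger $M$. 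Since $2\le p<q$, we have $q\ge3$. Set $R(b,q)\ceq L(b,q)/U(b,q)$, regarded as a function of integers $b\ge5$ and $q\ge3$; the conditions $p\mid n$, $p\ge2$ and genus $\ge2$ only reduce the set of pairs to check. The heuristic reason this should work for $b\ge5$ is the following. The numerator contains $(q(b-1))!$, while the denominator contains $(bq/2)!$ times a factor $2^{q(3b/2-2)}$. Since $q(b-1)-bq/2=q(b/2-1)\ge 3q/2$, the surplus factorial growth is of order $(bq)^{q(b/2-1)}$, which beats any exponential in $bq$ once $b$ or $q$ is moderately large.

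First we would show that $R(b,q)$ is increasing in $b$ for fixed $q\ge3$, by writing out $R(b+1,q)/R(b,q)$. The factorial parts give $\Gamma(q(b-1)+q+1)/\Gamma(q(b-1)+1)>(q(b-1))^{q}$. They also give $\Gamma(bq/2+q/2+1)/\Gamma(bq/2+1)<(bq/2+q/2+1)^{q/2}$, using \lemref{lem:gamma}\ref{itm:gamma-3} (or \ref{itm:gamma-1} when $q/2<1$, which does not occur here). What remains is elementary powers: $2^{-3q/2}$, $(b+1)/b$, and the ratio of the $\bigl(b/(2(b+1))\bigr)^{(q-1)/2}$ terms, which is at least $1$. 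The ratio is then bounded below by something like $\bigl(c\,(b-1)^{2}q/(b+1)\bigr)^{q/2}$. We would check that this exceeds $1$ at $b=5$ for all $q\ge3$, using the monotonicity facts listed in \secref{sec:strategy}. Second, we would show that $R(5,q)$ is increasing in $q$. Here $R(5,q+1)/R(5,q)$ is a ratio of gamma functions with shifts $4$ and $5/2$, which we bound by \lemref{lem:gamma}. This gives roughly $(4q)^{4}/\bigl(2^{8}(q-1)(5q/2+5/2)^{5/2}\bigr)$, which grows like $q^{1/2}$. So the ratio exceeds $1$ from some explicit $q_{0}$ on, and we verify the few values below $q_{0}$ numerically. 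Together these give $R(b,q)\ge R(5,q)\ge R(5,q_{1})>1$ for $q\ge q_{1}$. Similarly, a threshold $b_{1}$ (uniform in $q\ge3$) gives $R(b,q)>1$ for all $b\ge b_{1}$. For odd $n$ we may replace $U$ by the sharper bound $\kappa_{2}3^{n/3}(n/3)!$ from \eqref{eq:Dupperodd}, if that is needed to shrink the exceptional set.

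This leaves a finite list of pairs $(b,q)$ with $5\le b<b_{1}$ and $3\le q<q_{1}$. For each we list the admissible passports $[n,b^{q},a^{p}]$, meaning $n=bq=pa$, $2\le p<q$, genus $\ge2$. For each passport we compute $\vt{N}$ exactly from \eqref{eq:Nbqa}, and $\vt{D(n)}$ or, if necessary, the smaller $\vt{C(b,q)}$ from \eqref{eq:calcD} and \eqref{eq:calcC}. We then tabulate these values as in \tabref{tab:beq3NC} and \tabref{tab:beq4NC}. If any case still fails $\vt{N}>\vt{C}$, we would exhibit an explicit $y$ and check $x^{k}y\ne yx^{k}$ for all $1\le k\le n-1$ via \corref{cor:ADtrivial}, as was done for $[10,2^{5},5^{2}]$.

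The main obstacle we expect is near the corner $b=5$, $q=3$ or $4$ (so $n=15$ or $20$, e.g.\ $[15,5^{3},\cdot]$ is impossible since $p<3$ forces $p=2\nmid15$, but $[20,5^{4},10^{2}]$ occurs). There the crude bound \eqref{eq:Nbabq} loses constants, through $\min$ over $B(a,p,q)$ and through $h(p)\ge h(q-1)$, while \eqref{eq:Dupper} is generous. As a result the monotonicity ratios may fall below $1$ for small $q$, and the threshold $q_{1}$ may come out fairly large. If that happens, we would first switch to the exact $\vt{C}$ comparison for $b=5,6$ only, or use the sharper intermediate bound \eqref{eq:NB} with the actual $p$, before enlarging the finite check.
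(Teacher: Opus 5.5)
Your plan is essentially the paper's proof. It uses the same ratio $R(b,q)$ of the bound \eqref{eq:Nbabq} to $\kappa_{1}2^{n/2}(n/2)!$, proves monotonicity in $b$ for each $q\ge3$ and in $q$ at $b=5$, and finishes with an exact check of $\vt{N}>\vt{D}$ for the nine leftover passports. In the paper both ratio bounds already exceed $1$ from $b=5$, $q=3$ on, so the small-$q$ breakdown you fear does not occur, and neither $\vt{C}$ nor explicit constructions are needed.

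One correction to your side remark: $[20,5^{4},10^{2}]$ is not admissible, since its genus would be $(20-6+1)/2=15/2$. In fact no admissible passport has $b=5$ and $q\le5$.
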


\begin{proof}
We prove the statement by showing that $\vt{N}>\vt{D}$ except for finitely many
triples $(n,a,b)$.
Here we have $b \ge 5$ and $q \ge 3$.

By the lower bound for $\vt{N}$ in \eqref{eq:Nbabq} and the upper bound for
$\vt{D}$ in \lemref{lem:Dupper},
\begin{align}
\f{\vt{N(b, q, a)}}{\vt{D(n)}} &\ge \f{b(q(b-1))!}{2^{q(b-2)}3\sqrt{2}(q-2)!}\left(\f{b}{2(b+1)}\right)^{\f{q-1}{2}}
\left(\kappa_{1}\cdot 2^{\f{n}{2}}\left(\f{n}{2}\right)!\right)^{-1} \\
&= \f{b(q(b-1))!}{2^{q(\f{3b}{2}-2)}3\sqrt{2}\kappa_{1}(q-2)!\left(\f{bq}{2}\right)!}\left(\f{b}{2(b+1)}\right)^{\f{q-1}{2}},
\end{align}
where $\kappa_{1} = 2623/1894$ and $M! \ceq \Gamma(M+1)$.

Let $R(b, q)$ denote the right-hand side. To reduce the argument to finitely many cases, we first establish
the following monotonicity properties:
\begin{itemize}
\item For each $q \ge 3$, $R(b, q)$ is strictly increasing as $b$ ranges over
integers $b \ge 5$.
\item $R(5, q)$ is strictly increasing as $q$ ranges over integers $q \ge 3$.
\end{itemize}

Consider the ratio
\begin{align}
R_{b}(b, q) &\ceq \f{R(b+1, q)}{R(b, q)} = \f{(b+1)(qb)!}{2^{q(\f{3(b+1)}{2}-2)}3\sqrt{2}\kappa_{1}(q-2)!\left(\f{(b+1)q}{2}\right)!}\left(\f{b+1}{2(b+2)}\right)^{\f{q-1}{2}} \\
&\sq\cdot\f{2^{q(\f{3b}{2}-2)}3\sqrt{2}\kappa_{1}(q-2)!\left(\f{bq}{2}\right)!}{b(q(b-1))!}\left(\f{b}{2(b+1)}\right)^{-\f{q-1}{2}} \\
&= \f{1}{2^{\f{3q}{2}}}\cdot\f{b+1}{b}\cdot\f{(qb)!}{(q(b-1))!}\cdot\f{\left(\f{bq}{2}\right)!}{\left(\f{(b+1)q}{2}\right)!}
\left(\f{(b+1)^{2}}{b(b+2)}\right)^{\f{q-1}{2}}.
\end{align}
Here, we have
\begin{align}
\f{b+1}{b} &> 1, \\
\f{(qb)!}{(q(b-1))!} &= \prod_{i=1}^{q}(q(b-1)+i) > (q(b-1))^{q}, \\
\f{(b+1)^{2}}{b(b+2)} &= \f{b^{2}+2b+1}{b^{2}+2b} > 1,
\end{align}
and, since $q/2 > 1$, by \lemref{lem:gamma}\ref{itm:gamma-3},
\begin{align}
\f{\left(\f{bq}{2}\right)!}{\left(\f{(b+1)q}{2}\right)!} = \f{\Gamma\left(\f{bq}{2}+1\right)}{\Gamma\left(\f{bq}{2}+1+\f{q}{2}\right)}
> \left(\f{bq}{2}+1+\f{q}{2}\right)^{-\f{q}{2}} > \left(\f{bq}{2}+\f{q}{2}+\f{q}{2}\right)^{-\f{q}{2}} = \left(\f{(b+2)q}{2}\right)^{-\f{q}{2}}.
\end{align}
Therefore,
\begin{align}
R_{b}(b, q) &> \f{1}{2^{\f{3q}{2}}}(q(b-1))^{q}\left(\f{(b+2)q}{2}\right)^{-\f{q}{2}} \\
& = \left(\f{q(b-1)^{2}}{4(b+2)}\right)^{\f{q}{2}} = \left(\f{b-1}{b+2}\cdot\f{q(b-1)}{4}\right)^{\f{q}{2}}
= \left(\left(1-\f{3}{b+2}\right)\f{q(b-1)}{4}\right)^{\f{q}{2}} \\
&\ge \left(\left(1-\f{3}{5+2}\right)\cdot\f{3(5-1)}{4}\right)^{\f{3}{2}} = \left(\f{12}{7}\right)^{\f{3}{2}} > 1.
\end{align}
Hence $R(b, q)$ is strictly increasing in $b$.

The ratio of successive values of $R(5, q)$ with respect to $q$ is
\begin{align}
R_{q}(5, q) &\ceq \f{R(5, q+1)}{R(5, q)} = 
\f{5(4(q+1))!}{2^{\f{11}{2}(q+1)}3\sqrt{2}\kappa_{1}(q-1)!\left(\f{5(q+1)}{2}\right)!}\left(\f{5}{12}\right)^{\f{q}{2}} \\
&\sq\cdot\f{2^{\f{11}{2}q}3\sqrt{2}\kappa_{1}(q-2)!\left(\f{5q}{2}\right)!}{5(4q)!}\left(\f{5}{12}\right)^{-\f{q-1}{2}} \\
&=\f{1}{2^{\f{11}{2}}(q-1)}\cdot\f{(4(q+1))!}{(4q)!}\cdot\f{\Gamma\left(\f{5q}{2}+1\right)}{\Gamma\left(\f{5q}{2}+\f{7}{2}\right)}
\left(\f{5}{12}\right)^{\f{1}{2}}\\
&= \f{(4q+1)(4q+2)(4q+3)(4q+4)}{2^{\f{11}{2}}(q-1)}\cdot
\f{\Gamma\left(\f{5q}{2}+1\right)}{\left(\f{5q}{2}+\f{5}{2}\right)\left(\f{5q}{2}+\f{3}{2}\right)\Gamma\left(\f{5q}{2}+\f{3}{2}\right)}
\left(\f{5}{12}\right)^{\f{1}{2}}.
\end{align}
By \lemref{lem:gamma}\ref{itm:gamma-1},
\begin{align}
\f{\Gamma\left(\f{5q}{2}+1\right)}{\Gamma\left(\f{5q}{2}+\f{3}{2}\right)} > \left	(\f{5q}{2}+1\right)^{-\f{1}{2}}.
\end{align}
Hence
\begin{align}
R_{q}(5, q) &> \f{(4q+1)(4q+2)(4q+3)(4q+4)}{2^{\f{11}{2}}(q-1)\left(\f{5q}{2}+\f{5}{2}\right)\left(\f{5q}{2}+\f{3}{2}\right)\sqrt{\f{5q}{2}+1}}\left(\f{5}{12}\right)^{\f{1}{2}} \\
&= \f{(2q+1)(4q+1)(4q+3)}{2\sqrt{15}(q-1)(5q+3)\sqrt{5q+2}}.
\end{align}
Therefore,
\begin{align}
R_{q}(5, q)^{2} - 1 &> \f{(2q+1)^{2}(4q+1)^{2}(4q+3)^{2}}{60(q-1)^{2}(5q+3)^{2}(5q+2)} - 1.
\end{align}
Let $t = q-3$. Then
\begin{align}
R_{q}(5, q)^{2} - 1 &> \f{(2(t+3)+1)^{2}(4(t+3)+1)^{2}(4(t+3)+3)^{2}}{60(t+3-1)^{2}(5(t+3)+3)^{2}(5(t+3)+2)} - 1 \\
&= \f{1024 t^6+ 14004 t^5+ 78532 t^4 + 247488 t^3 + 523876 t^2 + 759900 t + 541305}{60(t+2)^{2}(5t+18)^{2}(5t+17)} \\
&> 0\q(t \ge 0).
\end{align}
Hence $R_{q}(5, q) > 1$. Thus, $R(5, q)$ is strictly increasing as $q$ ranges over integers $q \ge 3$.

A direct computation gives
\begin{align}
&R(5, q) < 1\ (3 \le q \le 11),\q R(5, 12) > 1, \\
&R(6, 3) < 1,\q R(6, q) > 1\ (4 \le q \le 11), \\
&R(7, 3) > 1.
\end{align}
These computations and the monotonicity properties established above
give the following conclusions. If $q \ge 12$, then
\begin{align}
R(b, q) &\ge R(5, q) \ge R(5, 12) > 1.
\end{align}
If $4 \le q \le 11$ and $b \ge 6$, then
\begin{align}
R(b, q) &\ge R(6, q) > 1.
\end{align}
Finally, if $q = 3$ and $b \ge 7$, then
\begin{align}
R(b, 3) &\ge R(7, 3) > 1.
\end{align}
It follows that $R(b, q) > 1$, hence $\vt{N} > \vt{D}$, for all passports except for the following:
\begin{align}
&[18, 6^{3}, 9^{2}],\ [30, 5^{6}, 6^{5}],\ [30, 5^{6}, 10^{3}],\ [35, 5^{7}, 7^{5}],\ [40, 5^{8}, 8^{5}], \\
&[45, 5^{9}, 9^{5}],\ [45, 5^{9}, 15^{3}],\ [50, 5^{10}, 10^{5}],\ [55, 5^{11}, 11^{5}].
\end{align}

\tabref{tab:bge5ND} shows the exact values of $\vt{N}$ and $\vt{D}$ for these passports.
In all cases, we have $\vt{N}>\vt{D}$.

\begin{table}[htbp]
\centering
\footnotesize
\begin{tabular}{|l|c|r|}
\hline
Passport & Genus &
\makecell[c]{
$\vt{N(b,q,a)}$ (top), $\vt{D(n)}$ (bottom)} \\
\hline
$[18, 6^{3}, 9^{2}]$ & 7 &
\makecell[r]{57767270400\\186318144} \\
\hline
$[30, 5^{6}, 6^{5}]$ & 10 &
\makecell[r]{48909810817444921344\\42850087977945660} \\
\hline
$[30, 5^{6}, 10^{3}]$ & 11 &
\makecell[r]{7606132222378640670720\\42850087977945660} \\
\hline
$[35, 5^{7}, 7^{5}]$ & 12 &
\makecell[r]{25301336298397996859523072\\395766805} \\                                                                                                                                                                                                                                       
\hline
$[40, 5^{8}, 8^{5}]$ & 14 &
\makecell[r]{25706613944092720665201487183872\\2551082656125844214400000} \\
\hline
$[45, 5^{9}, 9^{5}]$ & 16 &
\makecell[r]{46602888248571069531099154183801012224\\18763698601465755750} \\
\hline
$[45, 5^{9}, 15^{3}]$ & 17 &
\makecell[r]{16307284324171335435312109500945200578560\\18763698601465755750} \\
\hline
$[50, 5^{10}, 10^{5}]$ & 18 &
\makecell[r]{140236937177154356771171774119984509977886720\\520469842636666622728518576000000} \\
\hline
$[55, 5^{11}, 11^{5}]$ & 20 &
\makecell[r]{662088899432150365442833716144463291652766418599936\\1949062519326065} \\
\hline
\end{tabular}
\HL
\caption{Exact values of $\vt{N(b, q, a)}$ and $\vt{D(n)}$ ($b \ge 5$)}
\label{tab:bge5ND}
\end{table}

Therefore, every uniform passport $[n,b^{q},a^{p}]$ with $n=pa=qb$, $2\le p<q$, genus
at least~$2$, and $b\ge5$ admits a \dde with trivial automorphism group.
\end{proof}
\OL


\section{Main Results}

Combining the results of \secref{sec:class4}, we obtain the following.

\begin{thm}
\label{thm:class4}
\thmclassfour
\end{thm}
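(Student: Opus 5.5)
The proof will be an assembly of the case-by-case results of \secref{sec:class4}, organized by the value of $b$. Let the passport be $[a^{p}, b^{q}, n]$ with $n = pa = qb$ and $2 \le p < q$; equivalently $2 \le b < a < n$, with $b \ge 2$ because $q < n$. The first step is to reduce to the ordering $[n, b^{q}, a^{p}]$ used throughout \secref{sec:setup}. The automorphism group $\AD = C_{\Sym(E)}(\gen{x,y})$ is unchanged under permuting the roles of $x$, $y$, and $z = (xy)^{-1}$, because $\gen{x,y} = \gen{x,z} = \gen{y,z}$. Permuting the roles carries a dessin with one ordering of the passport to a dessin with any other ordering and the same automorphism group. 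So it suffices to treat $[n, b^{q}, a^{p}]$, and this also settles the second sentence of the theorem for all six permutations.

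For $[n, b^{q}, a^{p}]$ of genus at least $2$, I will split according to $b$ and cite the corresponding result.
\begin{itemize}
\item If $b = 2$ and $a = 3$, apply \propref{prop:class4b2a3}.
\item If $b = 2$ and $a = 4$, apply \propref{prop:class4b2a4}.
\item If $b = 2$ and $a \ge 5$, apply \propref{prop:c4b2age5}.
\item If $b = 3$, apply \propref{prop:class4b3}.
\item If $b = 4$, apply \propref{prop:class4b4}.
\item If $b \ge 5$, apply \propref{prop:class4bge5}.
\end{itemize}
Since $a > b \ge 2$, the three subcases with $b = 2$ exhaust $a \ge 3$. Hence every admissible pair $(a, b)$ is covered. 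Each cited proposition produces $y$ such that $x$ is an $n$-cycle, $y$ has cycle type $(b^{q})$, $(xy)^{-1}$ has cycle type $(a^{p})$, and $\AD \cong \{1\}$. It does so either through $\vt{N} > \vt{C}$ or $\vt{N} > \vt{D}$ via \corref{cor:ADtrivial}, or through an explicit construction. By the correspondence between transitive permutation pairs and dessins, each such $y$ yields a dessin with the required passport.

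The only step that needs care is checking the bookkeeping. Each proposition's hypotheses are exactly $n = pa = qb$, $2 \le p < q$, and $g \ge 2$. Some of them add the genus-integrality consequences (odd $t \ge 3$, or $p$ odd), but these follow automatically from $g \in \Z$ and $g \ge 2$ via \eqref{eq:genus-uc}, so no admissible passport is left out. The heavy analytic work is already contained in the cited propositions, so I expect no real obstacle beyond confirming that the case split is exhaustive.
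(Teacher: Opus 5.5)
Your proposal is correct and follows the paper's proof: you reduce to $[n, b^{q}, a^{p}]$ using the invariance of $\AD$ under permuting the three partitions, and then cover all pairs $2 \le b < a$ with the same six propositions. The one step you leave unjustified is $q < n$ (hence $b \ge 2$). It is not a hypothesis; the paper derives it from the genus condition via \eqref{eq:genus-uc}, since $g \ge 2$ gives $q \le n-p-3 < n$.
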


\begin{proof}
Since permuting black vertices, white vertices, and faces preserves the automorphism group,
it suffices to prove the statement for the passport $[n, b^{q}, a^{p}]$,
where $n = pa = qb$ and $2 \le p < q$.

Since the genus is at least~$2$, by \eqref{eq:genus-uc},
\begin{align}
\f{n-(p+q)+1}{2} \ge 2,
\end{align}
hence
\begin{align}
q &\le n-p-3 < n.
\end{align}
It follows that
\begin{align}
b=\frac{n}{q}>1.
\end{align}
Moreover, since $p<q$,
\begin{align}
a=\frac{n}{p}>\frac{n}{q}=b.
\end{align}
Therefore,
\begin{align}
2\le b<a,
\end{align}
and the following cases exhaust all possibilities.
\HL

The case $b = 2$, $a = 3$ was proved in \propref{prop:class4b2a3}.

The case $b = 2$, $a = 4$ was proved in \propref{prop:class4b2a4}.

The case $b = 2$, $a \ge 5$ was proved in \propref{prop:c4b2age5}.

The case $b = 3$ was proved in \propref{prop:class4b3}.

The case $b = 4$ was proved in \propref{prop:class4b4}.

The case $b \ge 5$ was proved in \propref{prop:class4bge5}.
\end{proof}

Moreover, combining this result with those of our previous papers, we obtain the following theorem
for general uniform unicellular passports of genus at least~$2$.

\begin{thm}
\label{thm:class1-4}
\thmclassonetofour
\end{thm}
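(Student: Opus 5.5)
The plan is a case reduction that assembles this paper's \thmref{thm:class4} with the results of \cite{Ohnishi26} and \cite{Ohnishi2606}. No new counting is needed.

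First, since the monodromy group and the centralizer are unchanged when the roles of black vertices, white vertices and faces are permuted, it suffices to treat a representative ordering. Write the passport as $[a^{p}, b^{q}, n]$ with $n = pa = qb$. Relabeling the first two entries if necessary, assume $p \le q$.

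Second, split according to the values of $p$ and $q$, using $g = (n-(p+q+1))/2 + 1 \ge 2$ throughout.
\begin{enumerate}
\item If $p = q = 1$, the passport is $[n, n, n]$. This case was settled in \cite{Ohnishi2606}.
\item If $p = 1 < q$, the passport is $[n, b^{q}, n]$. This was proved in \cite{Ohnishi26}.
\item If $p = q \ge 2$, then $a = b$ and the passport is $[b^{q}, b^{q}, n]$. This is the family handled in \cite{Ohnishi2606}.
\item If $2 \le p < q$, this is exactly \thmref{thm:class4}.
\end{enumerate}

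The same statement for every permutation of $a^{p}$, $b^{q}$, $n$ then follows from the invariance noted above.

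The only step needing care is checking that the four subcases are exhaustive. I also need to confirm that the hypotheses quoted from the earlier papers match precisely. In particular, the $[n,b^{q},n]$ result must be stated for all $q \ge 1$ so that it covers $[n,n,n]$ as the case $q=1$, or else $[n,n,n]$ must be cited separately. The earlier results must also cover genus at least~$2$ with no extra restrictions. I will cite the precise theorem numbers from \cite{Ohnishi26, Ohnishi2606} for each case.
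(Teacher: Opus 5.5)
Your proposal is correct and is essentially the paper's proof: reduce by symmetry to $[a^{p},b^{q},n]$ with $p\le q$ (the paper writes $a\ge b$), cite the earlier papers for $p=1$ and for $p=q\ge 2$, and apply \thmref{thm:class4} for $2\le p<q$. The only difference is that the paper does not split off $[n,n,n]$; its single case $a=n$ is handled by \cite[Theorem~7.6]{Ohnishi26}, which covers $[n,b^{q},n]$ for all $q\ge 1$ and so resolves the concern you raised, although citing the $[n,n,n]$ result of \cite{Ohnishi2606} with automorphism group of order $1$ would also work.
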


\begin{proof}
Since permuting black vertices, white vertices, and faces preserves the automorphism group,
it suffices to prove the statement for the passport $[a^{p},b^{q},n]$ with $n=pa=qb$ and $a\ge b$.

The case $a = n$ was proved in \cite[Theorem~7.6]{Ohnishi26}.

The case $b = a < n$ was proved in \cite[Theorem~4.7]{Ohnishi2606}.

The case $b < a < n$ was proved in \thmref{thm:class4}.
\end{proof}
\HL

\FloatBarrier
\subsection*{Acknowledgements}

I would like to express my sincere gratitude to Associate Professor Yasuhiro Wakabayashi for his continued guidance, insightful advice, and encouragement throughout my research.

I am also deeply grateful to all the members of the Wakabayashi Laboratory for their continued support and valuable discussions.
I would also like to thank the researchers who have provided helpful comments and engaged in stimulating discussions
at seminars and conferences.


\bibliographystyle{amsalpha}
\bibliography{References}

\end{document}